\documentclass[final,onefignum,onetabnum]{siamart171218}


\newcommand{\BD}[1]{B_{#1}}

\newcommand{\tdi}{\mathbf{\td{i}}}
\newcommand{\lb}{\left(}
\newcommand{\rb}{\right)}
\newcommand{\eps}{\epsilon}

\newcommand{\td}{\tilde}
\newcommand{\E}{\mathbb{E}}
\newcommand{\R}{\mathbb{R}}
\newcommand{\I}{\mathcal{I}}
\renewcommand{\L}{\mathcal{L}}
\newcommand{\D}{D}
\newcommand{\X}{\mathcal{X}}
\newcommand{\Dx}{D_{x}}
\renewcommand{\DH}{D_{H}}
\newcommand{\DF}{D_{F}}

\newcommand{\tto}{\td{\td{O}}}
\renewcommand{\succeq}{\ge}

\newcommand{\breg}[3]{#1(#2) - #1(#3) - \langle\nabla #1(#3), #2 - #3\rangle}
\newcommand{\ik}[3]{\langle\nabla #1(#3), #2 - #3 \rangle}

\newcommand{\la}{\left\langle}
\newcommand{\ra}{\right\rangle}
\renewcommand{\H}{\mathcal{H}}
\newcommand{\comp}{C_{\mathrm{comp}}}

\newcommand{\xj}{x^{(j)}}
\newcommand{\yj}{y^{(j)}}
\newcommand{\nuj}{\nu^{(j)}}
\newcommand{\zetaj}{\zeta^{(j)}}
\newcommand{\Qj}{Q^{(j)}}
\newcommand{\Mj}{M^{(j)}}
\newcommand{\tdMj}{\td{M}^{(j)}}
\newcommand{\Wj}{W_{j}}

\newcommand{\sI}{\td{\mathcal{I}}^{(j)}}
\newcommand{\etaj}{\eta_{j}}

\newcommand{\Bj}{B_{j}}
\newcommand{\bj}{b_{j}}
\newcommand{\mj}{m_{j}}
\newcommand{\mjj}{m_{j-1}}
\newcommand{\ej}{e_{j}}
\newcommand{\Ej}{\E_{j}}
\newcommand{\Ij}{\I_{j}}
\newcommand{\Nj}{N_{j}}
\newcommand{\Lj}{\mathcal{L}_{j}}
\newcommand{\Ljj}{\mathcal{L}_{j-1}}

\newcommand{\citep}{\cite}
\newcommand{\citet}{\cite}
\DeclareMathOperator*{\argmin}{arg\,min}
\DeclareMathOperator*{\argmax}{arg\,max}

\usepackage{lipsum}
\usepackage{amsfonts}
\usepackage{graphicx}
\usepackage{epstopdf}
\usepackage{enumerate}
\usepackage{algorithm}
\usepackage{amsmath,amssymb}
\usepackage{graphics}
\usepackage{algpseudocode}
\ifpdf
  \DeclareGraphicsExtensions{.eps,.pdf,.png,.jpg}
\else
  \DeclareGraphicsExtensions{.eps}
\fi


\newsiamremark{remark}{Remark}
\newsiamremark{hypothesis}{Hypothesis}
\crefname{hypothesis}{Hypothesis}{Hypotheses}
\newsiamthm{claim}{Claim}

\headers{Adaptivity of SCSG}{L. Lei and M. I. Jordan}

\title{On the Adaptivity of Stochastic Gradient-Based Optimization}

\author{Lihua Lei\thanks{Department of Statistics, Stanford University
  (\email{lihualei@stanford.edu}). This work was completed while he was a graduate student in the Department of Statistics at UC Berkeley.}
\and Michael I. Jordan\thanks{Computer Science Division \& Department of Statistics, University of California, Berkeley
  (\email{jordan@cs.berkeley.edu}).}}

\usepackage{amsopn}

\ifpdf
\hypersetup{
  pdftitle={On the Adaptivity of Stochastic Gradient-Based Optimization},
  pdfauthor={L. Lei and M. I. Jordan}
}
\fi


\externaldocument{SCSG_adaptivity_SIOPT_Supp_revision}


\begin{document}

\maketitle

\begin{abstract}
Stochastic-gradient-based optimization has been a core enabling methodology in applications to large-scale problems in machine learning and related areas. Despite the progress, the gap between theory and practice remains significant, with theoreticians pursuing mathematical optimality at a cost of obtaining specialized procedures in different regimes (e.g., modulus of strong convexity, magnitude of target accuracy, signal-to-noise ratio), and with practitioners not readily able to know which regime is appropriate to their problem, and seeking broadly applicable algorithms that are reasonably close to optimality. To bridge these perspectives it is necessary to study algorithms that are \emph{adaptive} to different regimes. We present the stochastically controlled stochastic gradient (SCSG) method for composite convex finite-sum optimization problems and show that SCSG is adaptive to both strong convexity and target accuracy. The adaptivity is achieved by batch variance reduction with adaptive batch sizes and a novel technique, which we referred to as {\it geometrization}, which sets the length of each epoch as a geometric random variable. The algorithm achieves strictly better theoretical complexity than other existing adaptive algorithms, while the tuning parameters of the algorithm only depend on the smoothness parameter of the objective. 
\end{abstract}

\begin{keywords}
  adaptivity, stochastic gradient method, finite-sum optimization, geometrization, variance reduction
\end{keywords}

\begin{AMS}
  90C15, 90C25, 90C06
\end{AMS}

\section{Introduction}
 
 The application of gradient-based optimization methodology to statistical machine learning has been a major success story, in practice and in theory.  Indeed, there is an increasingly detailed theory available for gradient-based algorithms that helps to explain their practical success.  There remains, however, a significant gap between theory and practice, in that the designer of machine learning algorithms is required to make numerous choices that depend on parameters that are unlikely to be known in a real-world machine-learning setting.  For example, existing theory asserts that different algorithms are preferred if a problem is strongly convex or merely convex, if the target accuracy is high or low, if the signal-to-noise is high or low and if data are independent or correlated.  This poses a serious challenge to builders of machine-learning software, and to users of that software.  Indeed, a distinctive aspect of machine-learning problems, especially large-scale problems, is that the user of an algorithm can be expected to know little or nothing about quantitative structural properties of the functions being optimized.  It is hoped that the data and the data analysis will inform such properties, not the other way around.

To take a classical example, the stochastic gradient descent (SGD) algorithm takes different forms for strongly convex objectives and non-strongly convex objectives.  In the former case, letting $\mu$ denote the strong-convexity parameter, if the stepsize is set as $O(1 / \mu t)$ then SGD exhibits a convergence rate of $O(1 / \mu \eps)$, where $\eps$ is the target accuracy~\citep{nesterov04}.  In the latter case setting the stepsize to $O(1/ \sqrt{t})$ yields a rate of $O(1 / \eps^2)$~\citep{Nemirovsky09}.  Using the former scheme for non-strongly convex objectives can significantly deteriorate the convergence~\citep{Nemirovsky09}.  It is sometimes suggested that one can insure strong convexity by simply adding a quadratic regularizer to the objective, using the coefficient of the regularizer as a conservative estimate of the strong-convexity parameter.  But this produces a significantly faster rate only if $\mu \gg \eps$, a regime that is unrealistic in many machine-learning applications, where $\eps$ is relatively large.  Setting $\mu$ to such a large value would have a major effect on the statistical properties of the optimizer.

Similar comments apply to presumptions of knowledge of Lipschitz parameters, mini-batch sizes, variance-reduction tuning parameters, etc.  Current practice often involves heuristics in setting these tuning parameters, but the use of these heuristics can change the algorithm and the optimality guarantees may disappear.

Our goal, therefore, should be that our algorithms are \emph{adaptive}, in the sense that they perform as well as an algorithm that is assumed to know the ``correct'' choice of tuning parameters, even if they do not know those parameters.  In particular, in the convex setting, we wish to derive an algorithm that  does not involve $\mu$ in its implementation but whose convergence rate would be better for larger $\mu$ while still reasonable for smaller $\mu$, including the non-strongly convex case where $\mu = 0$. 

Such adaptivity has been studied implicitly in the classic literature. \cite{ruppert1988efficient} and \cite{polyak1990new} and \cite{polyak1992acceleration} showed that the average iterate of SGD with stepsize $O(t^{-\alpha})$ for $\alpha \in (1/2, 1)$ satisfies a central limit theorem with information-theoretically optimal asymptotic variance.  This implies adaptivity because the performance adapts to the underlying parameters of the problem, including the modulus of strong convexity, even though the algorithm does not require knowledge of them.  The analysis by \cite{polyak1992acceleration} is, however, asymptotic and relies on the smoothness of Hessian.  Under similar assumptions on the Hessian, \cite{moulines2011non} provided a non-asymptotic analysis establishing adaptivity of SGD with Polyak-Ruppert averaging.  Further contributions to this line of work include \cite{bach13, flammarion15, dieuleveut17}, who prove the adaptivity of certain versions of SGD with refined rates for self-concordant objectives, including least-square regression and logistic regression. 

This line of work relies on conditions on higher-order derivatives which are not required in the modern literature on stochastic gradient methods.  In fact, under fairly standard assumptions for first-order methods, \cite{moulines2011non} provided a non-asymptotic analysis for SGD with stepsize $O(t^{-\alpha})$ without averaging and showed that this algorithm exhibits adaptivity to strong convexity while having reasonable guarantee for non-strongly convex objectives.  Specifically, if $\alpha = 2/3$, their results show that the rate to achieve an $\eps$-accurate solution for the expected function value is $\td{O}\big(\min\big\{1 / \mu^{3} + 1 / \mu \eps^{2},$ $1 / \eps^{3}\big\}\big)$, where $\td{O}$ hides logarithmic factors. This result was taken further by studying alternative stepsize schemes; in particular, \cite{xu2019accelerate} proposed a variant of projected SGD with stagewise diminishing stepsizes and diameters. Unlike the aforementioned work, the adaptivity in this case is weaker because it requires knowledge of the strong convexity parameter, as well as the initial suboptimality, in the complexity analysis (in particular, they require  a sufficiently large initial diameter).  A weaker form of adaptivity, to smoothness but not to strong convexity, was established by \cite{levy2018online} when the polynomial decaying stepsize is replaced by an AdaGrad-type stepsize \citep{Adagrad}, assuming a bounded domain and bounded stochastic gradients. Finally, \cite{chen2018sadagrad} presented a restarting variant of AdaGrad with provable adaptivity to strong convexity given an initial overestimate of the strong convexity parameter. 

Further progress has been made by focusing on a setting that is particularly relevant to machine learning---that of \emph{finite-sum optimization}
.  The objective function in this setting takes the following form:
\begin{equation}
  \label{eq:obj_finite_sum}
  \min_{x\in \X}F(x) = f(x) + \psi(x), \quad \text{where } f(x)
   = \frac{1}{n}\sum_{i=1}^{n}f_{i}(x),
\end{equation}
where $\X$ is the parameter space, $n$ is the number of data points, the functions $f_{i}(x)$ are data-point-specific \emph{loss} functions and $\psi(x)$ is the \emph{regularization} term.  We assume that each $f_{i}(x)$ is differentiably convex and $\psi(x)$ is convex but can be non-differentiable.  The introduction of the parameter $n$ into the optimization problem has two important implications.  First, it implies that the number of operations to obtain a full gradient is $O(n)$, which is generally impractical in modern machine-learning applications, where the value of $n$ can be in the tens to hundreds of millions.  This fact motivates us to make use of stochastic estimates of gradients.  Such randomness introduces additional variance that interacts with the variability of the data, and tuning parameters are often introduced to control this variance.

Second, the finite-sum formulation highlights the need for adaptivity to the target accuracy $\eps$, where that accuracy is related to the number of data points $n$ for statistical reasons.  Unfortunately, different algorithms perform better in high-accuracy versus low-accuracy regimes, and the choice of regime is generally not clear to a user of machine-learning algorithms, given that target accuracy varies not only as a function of $n$, but also as a function of other parameters, such as the signal-to-noise ratio, that the user is not likely to know.  Ideally, therefore, optimization algorithms should be adaptive to target accuracy, performing well in either regime. 

Deterministic gradient-descent-based methods can be made adaptive to strong convexity and smoothness simultaneously by exploiting the Polyak stepsize \citep{hazan2019revisiting}. However, computation of a full gradient is expensive, rendering the method undesirable for finite-sum optimization. A recent line of research has shown that algorithms with lower complexity can be designed in the finite-sum setting with some adaptivity, generally via careful control of the variance. The stochastic average gradient (SAG) method opened this line of research, establishing a complexity of $\td{O}\lb \min\{n / \eps, n + L / \mu\}\rb$~\citep{SAG}.  Importantly, this result shows that SAG is adaptive to strong convexity.  To achieve such adaptivity, however, SAG requires two sequences of iterates, the average iterate and the last iterate.  \cite{SAGA} propose a single-sequence variant of SAG that is also adaptive to strong convexity, yet under stronger assumption that each $f_{i}$ is strongly convex.  Both methods suffer, however, from a prohibitive storage cost of $O(nd)$, where $d$ is the dimension of $\X$.  Further developments in this vein include the stochastic variance reduced gradient (SVRG) method \citep{SVRG} and the stochastic dual coordinate ascent (SDCA) method \citep{SVRG}; they achieve the same computational complexity as SAG while reducing the storage cost to $O(d)$.  They are not, however, adaptive to strong convexity. 

\cite{SCSG} presented a randomized variant of SVRG that achieves the same convergence rate and adaptivity as SAG but with the same storage cost as SVRG.  However, as is the case with SAG, the complexity of $O(n / \eps)$ for the non-strongly convex case is much larger than the oracle lower bound of $O(n + \sqrt{n / \eps})$~\citep{woodworth16}.  \cite{nguyen2019finite} propose another variance-reduction method that is provably adaptive to strong convexity, though the result is proved for the expected gradient norm and is thus weaker than \cite{SCSG}. \cite{xu17} develop another variant of SVRG which adapts to a more general condition, called a ``H\"{o}lderian error bound,'' with strong convexity being a special case.  In contrast to \cite{SCSG}, they required an initial conservative estimate of the strong convexity parameter.  Under an extra strong assumption of gradient interpolation---that all individual loss functions have vanishing gradients at the optimum---\cite{vaswani2019painless} developed an algorithm that achieves adaptivity to the smoothness parameter and to the modulus of strong convexity simultaneously. On the other hand, recent work of \citet{lan2019unified} that came after our work presents an algorithm that achieves certain adaptivity to the target accuracy. However, they need to know the modulus of strong convexity and the adaptivity is only obtained in the high-accuracy regime because it requires full gradient computations periodically. Finally, while our focus is convex optimization, we note that adaptivity has also been studied for nonconvex finite-sum optimization (e.g., \cite{lei17, paquette2017catalyst}). 

In this article we present an algorithm, the stochastically controlled stochastic gradient (SCSG) algorithm, that exhibits adaptivity both to strong convexity and to target accuracy.  SCSG is a nested procedure that is similar to the SVRG algorithm.  Crucially, it does not require the computation of a full gradient in the outer loop as performed by SVRG, but makes use of stochastic estimates of gradients in both the outer loop and the inner loop.  Moreover, it makes essential use of a randomization technique (``geometrization'') that allows terms to telescope across the outer loop and the inner loops; such telescoping does not happen in SVRG, a fact which leads to the loss of adaptivity for SVRG.

The rest of the article is organized as follows: Section \ref{sec:notation} introduces notation, assumptions and definitions. In Section \ref{sec:SCSG} and Section \ref{sec:ana}, we focus on the relatively simple setting of unregularized problems and Euclidean geometry, introducing the key ideas of geometrization and adaptive batching. We provide key proofs  in Section \ref{sec:ana} and leave non-essential proofs into Appendix \ref{app:ana} in the Supplementary Material. We extend these results to regularized problems and to non-Euclidean geometry in Section \ref{sec:ana_general}.  The extension relaxes standard assumptions for analyzing mirror descent methods 
and may be of independent interest. All proofs for the general case are relegated into Appendix \ref{app:ana_general} and some miscellaneous results are presented in Appendix \ref{app:miscellaneous} in Supplementary Material. Finally, the desirable empirical performance of SCSG is demonstrated in Appendix \ref{app:empirical}.

\section{Notation, Assumptions and Definitions}\label{sec:notation}

We write $a\wedge b$ (resp. $a \vee b$) for $\min\{a, b\}$ (resp. $\max\{a, b\}$), and $(a)_{*}^{\xi}$ (or $[a]_{*}^{\xi}$) for $\max\{a, 1\}^{\xi}$ throughout the paper. The symbol $\E$ denotes the expectation of a random element and $\E_{X}$ denotes an expectation over the randomness of $X$ while conditioning on all other random elements. We adopt Landau's notation ($O(\cdot), o(\cdot)$), and we occasionally use $\td{O}(\cdot)$ to hide logarithmic factors.  We define computational cost by making use of the IFO framework of \cite{Agarwal14, reddi16svrg}, where we assume that sampling an index $i$ and computing the pair $(f_{i}(x), \nabla f_{i}(x))$ incurs a unit of cost. For notational convenience, given a subset $\I\subset \{1, \ldots, n\}$, we denote by $\nabla f_{\I}(x)$ the batch gradient: 
\[\nabla f_{\I}(x) = \frac{1}{|\I|}\sum_{i\in \I}\nabla f_{i}(x).\]
By definition, computing $\nabla f_{\I}(x)$ incurs $|\I|$ units of cost.

In this section and the following two sections we focus on unregularized problems and Euclidean geometry, turning to regularized problems and non-Euclidean geometry in Section~\ref{sec:ana_general}.  Specifically, we consider the case $\X = \R^{d}$, $\psi \equiv 0$ and make the following assumptions that target the finite-sum optimization problem:
\begin{enumerate}[\textbf{A}1]
\item $f_{i}$ is convex with $L$-Lipschitz gradient
\[
f_{i}(x) - f_{i}(y) - \ik{f_{i}}{x}{y} \le \frac{L}{2}\|x - y\|_{2}^{2},
\quad \forall i = 1,\ldots, n,
\]
for some $L < \infty$;
\item $F = f$ is strongly convex at $x^{*}$ with
\[ f(x) - f(x^{*})\ge \frac{\mu}{2}\|x - x^{*}\|_{2}^{2},
\]
for some $\mu \ge 0$.
\end{enumerate}

Note that assumption \textbf{A}2 always holds with $\mu = 0$, corresponding to the non-strongly convex case.  Note also that with the exception of \cite{SAG}, this assumption is weaker than most of the the literature on smooth finite-sum optimization, where strong convexity of $f$ is required at every point.

Our analysis will make use of the following key quantity~\citep{SCSG}:
\begin{equation*}
  \H(f) = \frac{1}{n}\sum_{i=1}^{n}\|\nabla f_{i}(x^{*})\|_{2}^{2},
\end{equation*}
where $x^{*}$ denotes the optimum of $f$.  If multiple optima exist we take one that minimizes $\H(f)$.  We use $\H(f)$, an average squared norm at the optimum, in place of the uniform upper bound on the gradient that is often assumed in other work.  The latter is not realistic for many practical problems in machine learning, including least squares, where the gradient is unbounded.  On the other hand,  it is noteworthy that $\H(f)$ only depends on the optimum. For instance, $\H(f) = 0$ if all gradients vanish at the optimum, as studied for over-parametrized models (e.g., \citep{ma2017power, vaswani2018fast, vaswani2019painless}). As will be shown later, the complexity of our algorithm only depends on $\H(f)$ so it can be applied to study the case with data interpolation. We will write $\H(f)$ as $\H$ when no confusion can arise.

We let $\td{x}_{0}$ denote the initial value (possibly random) and define the following measures of complexity:
\begin{equation}
  \label{eq:DxDH}
  \Dx = L\E \|\td{x}_{0} - x^{*}\|_{2}^{2}, \quad \DH = \H / L, \quad \D = \max\{\Dx, \DH\}.
\end{equation}

Recall that a geometric random variable, $N\sim \mathrm{Geom}(\gamma)$, is a discrete random variable with probability mass function $P(N = k) =  (1- \gamma)\gamma^{k},
\mbox{for}\ k = 0, 1, \ldots$, and expectation:
\begin{equation*}
\E N = \frac{\gamma}{1 - \gamma}.
\end{equation*}
Geometric random variables will play a key role in the design and analysis of our algorithm.

Finally, we introduce two fundamental definitions that serve to clarify desirable properties of optimization algorithms.  We refer to the first property as \emph{$\eps$-independence}.
\begin{definition*}
  An algorithm is \textbf{$\mathbf{\eps}$-independent} if it guarantees convergence
  at all target accuracies $\eps$.
\end{definition*}

$\eps$-independence is a crucial property in practice because a target accuracy is usually not exactly known apriori. An $\eps$-independent algorithm satisfies the ``one-pass-for-all'' property where the theoretical complexity analysis applies to the whole path of the iterates. In contrast, an $\eps$-dependent algorithm only has a theoretical guarantee for a particular $\eps$, whose value is often unknown in practice.  To illustrate we consider SGD, where the iterate is updated by $x_{k+1} = x_{k} - \eta_{k}\nabla f_{i_{k}}(x_{k})$ and where $i_{k}$ is a uniform index from $\{1, \ldots, n\}$.  There are two popular schemes for theoretical analysis: (1) $\eta_{k} = O(1 / \sqrt{k})$ or (2) $\eta_{k}\equiv 1 / \sqrt{T}$ and the iterates are updated for $O(T)$ steps where $T = O(1 / \eps^{2})$.  Although both versions have theoretical complexity $\td{O}\lb\eps^{-2}\rb$, only the former is $\eps$-independent.

The second important property is referred to as \emph{almost universality}.
\begin{definition*}
  An algorithm is \textbf{almost universal} if it only requires the knowledge of
  the smoothness parameters $L$.
\end{definition*}

The term \emph{almost universality} is motivated by the notion of \emph{universality} introduced by \citet{nesterov14} which does not require the knowledge of $L$ or other parameters such as the variance of the stochastic gradients.  Returning to the previous example, both versions of SGD are universal. It is noteworthy that universal gradient methods are usually either $\eps$-dependent (e.g. \citep{nesterov14}) or require imposing other assumptions such as uniformly bounded $\nabla f_{i}$ (e.g. \citep{Nemirovsky09}).  The SCSG algorithm developed in this paper is both $\eps$-independent and almost universal. This category also includes SGD for general convex functions \citep{Nemirovsky09}, SAG \citep{SAG}, SAGA \citep{SAGA}, SVRG++ \citep{Zhu15}, Katyusha for non-strongly convex functions \citep{Katyusha}, and AMSVRG \citep{nitanda16}. In contrast, algorithms such as SGD for strongly convex functions \citep{Nemirovsky09}, SVRG \citep{SVRG}, SDCA \citep{SDCA}, APCG \citep{APCG}, Katyusha for strongly convex functions \citep{Katyusha} and adaptive SVRG \citep{xu17} are $\eps$-independent but not almost universal because they need full or partial knowledge of $\mu$. Furthermore, algorithms such as Catalyst \citep{Catalyst} and AdaptReg \citep{blackbox} even depend on unknown quantities such as $F(x_{0}) - F(x^{*})$ or the variance of the $\nabla f_{i}$.  In comparing algorithms we believe that clarity on these distinctions is critical, in addition to comparison of convergence rates.

\section{Stochastically Controlled Stochastic Gradient (SCSG)}\label{sec:SCSG}

In this section we present SCSG, a computationally efficient framework for variance reduction in stochastic gradient descent algorithms.  SCSG builds on the SVRG algorithm of \cite{SVRG}, incorporating several essential modifications that yield not only computational efficiency but also adaptivity.  Recall that SVRG is a nested procedure that computes a full gradient in each outer loop and uses that gradient as a baseline to reduce the variance of the stochastic gradients that are computed in an inner loop.  The need to compute a full gradient, at a cost of $n$ operations, unfortunately makes the SVRG procedure impractical for large-scale applications. SCSG seeks to remove this bottleneck by replacing the full gradient with an approximate, stochastic gradient, one that is based on a batch size that is significantly smaller than $n$ but larger than the size used for the stochastic gradients in the inner loop.  By carefully weighing the contributions to the bias and variance of these sampling-based estimates, SCSG achieves a small iteration complexity while also keeping the per-iteration complexity feasibly small.  

Further support for the SCSG framework comes from the comparison with SVRG in the setting of strongly convex objectives.  In this setting, SVRG relies heavily on a presumption of knowledge of the strong convexity parameter $\mu$.  In particular, to achieve a complexity of $O((n + \kappa)\log (1/\eps))$, the number of stochastic gradients queried in the inner loop of SVRG needs to scale as $\kappa$.  By contrast, the SCSG framework achieves the same complexity without knowledge of $\mu$.  This is achieved by setting the number of inner-loop stochastic gradients to be a geometric random variable.  As we discuss below, the usage of a geometric random variable---a technique that we refer to as ``geometrization''---is crucial in the design and analysis of SCSG.  We believe that it is a key theoretical tool for achieving adaptivity to strong convexity.

The original version of SCSG was $\eps$-dependent and not almost universal, because it required knowledge of the parameter $\H$~\citep{SCSG}.  Moreover the algorithm had a sub-optimal rate in the high-accuracy regime.  In further development of the SCSG framework, in the context of nonconvex optimization~\citep{lei17}, we found that $\eps$-independence and almost universality could be achieved by employing an increasing sequence of batch sizes.

In the remainder of this section, we bring these ideas together and present the general form of the SCSG algorithm, incorporating adaptive batching, geometrization and mini-batches in the inner loop. The resulting algorithm is adaptive, $\eps$-independent and almost universal. Roughly speaking, the adaptive batching enables the adaptivity to target accuracy and the geometrization
enables the adaptivity to strong convexity.  The pseudocode for SCSG is shown in Algorithm~\ref{algo:SCSGplus}. Guidelines for practical choice for the parameters is provided in Remark \ref{rem:practical} in Section \ref{subsec:multi-epoch}. As can be seen, the algorithm is superficially complex, but, as in the case of line-search and trust-region methods that augment simple gradient-based methods in deterministic optimization, the relative lack of dependence on hyperparameters makes the algorithm robust and relatively easy to deploy.

Note that in Algorithm~\ref{algo:SCSGplus}, and throughout the paper, we use $\td{x}_{j}$ to denote the iterate in the $j$th outer loop and $x_{k}^{(j)}$ to denote the iterate in the $k$th step of the $j$th inner loop. 

\begin{algorithm}
\caption{SCSG for unconstrained finite-sum optimization}
\label{algo:SCSGplus}
\textbf{Inputs: } Number of stages $T$, initial iterate $\td{x}_{0}$, stepsizes $(\etaj)_{j=1}^{T}$, block sizes $(\Bj)_{j=1}^{T}$, inner loop sizes $(m_{j})_{j=1}^{T}$, mini-batch sizes $(\bj)_{j=1}^{T}$.

\textbf{Procedure}
\begin{algorithmic}[1]
  \For{$j = 1, 2, \cdots, T$}
  \State Uniformly sample a batch $\I_{j}\subset \{1,\ldots,n\}$ with $|\I_{j}| = \Bj$;
  \State $\mu_{j}\gets \nabla f_{\I_{j}}(\td{x}_{j-1})$;
  \State $\xj_{0} \gets \td{x}_{j-1}$;
  \State Generate $N_{j}\sim \mathrm{Geom}\lb\frac{\mj}{\mj + \bj}\rb$;
  \For{$k = 1, 2, \cdots, N_{j}$}
  \State Uniformly sample a batch $\sI_{k-1}\subset \{1,\ldots, n\}$ with $|\sI_{k-1}| = \bj$;
  \State $\nuj_{k - 1} \gets \nabla f_{\sI_{k-1}}(\xj_{k-1}) - \nabla f_{\sI_{k-1}}(\xj_{0}) + \mu_{j}$;
  \State $\xj_{k}\gets \xj_{k-1} - \etaj\nuj_{k - 1}$;
  \EndFor
  \State $\td{x}_{j}\gets \xj_{N_{j}}$;
  \EndFor
\end{algorithmic}
\textbf{Output: } $\td{x}_{T}$.
\end{algorithm}

To measure the computational complexity of SCSG, let $T(\eps)$ denote the first time step at which $\td{x}_{T}$ is an $\eps$-approximate solution, as well as all following iterates $\td{x}_{T+1}, \td{x}_{T+2}, \ldots$:
\begin{equation}
  \label{eq:Teps}
  T(\eps) = \min\{T': \E(f(\td{x}_{T}) - f(x^{*}))\le \eps, \,\, \forall T \ge T'\}.
\end{equation}
This criterion is more stringent than those considered in some other work which neglects the performance of $\td{x}_{T}$ for $T > T(\eps)$. The computational cost incurred for computing $\td{x}_{T}$ is
\begin{equation}\label{eq:comp_eps}
\comp(\eps) = \sum_{j=1}^{T(\eps)}(\bj\Nj + \Bj).
\end{equation}
Noting that $\comp(\eps)$ is random, we consider the average complexity obtained
by taking the expectation of $\comp(\eps)$. Since $\Nj\sim \mathrm{Geom}(\frac{\mj}
{\mj + \bj})$, we have:
\begin{equation}
\label{eq:generic_complexity}
  \E\comp(\eps) = \sum_{j=1}^{T(\eps)}\lb\bj\frac{\mj}{\bj} + \Bj\rb
  = \sum_{j=1}^{T(\eps)}(\mj + \Bj).
\end{equation}

\subsection{Two key ideas: adaptive batching and geometrization} 

The adaptivity of SCSG is achieved via two techniques: adaptive batching and geometrization.  We provide intuitive motivation for these two ideas in this section.

The motivation for adaptive batching is straightforward. Heuristically, at the early stages of the optimization process, the iterate is far from the optimum and a small subset of data is sufficient to reduce the variance.  On the other hand, at later stages, finer variance reduction is required to prevent the iterate from moving in the wrong direction.  By allowing the batch sizes to increase, SCSG behaves like SGD for the purposes of low-accuracy computation while it behaves like SVRG for high-accuracy computation.

The motivation for geometrization is more subtle.  To isolate its effect, let us consider a special case of SCSG in which the parameters are set as follows:
\[
\Bj = \mj \equiv n, \bj = 1, \etaj \equiv \eta = O\lb\frac{1}{L}\rb.
\]
Note that the above setting is only used to illustrate the effect of geometrization and the setting that leads to adaptivity to both strong convexity and target accuracy is more involved and given in Section \ref{sec:ana}. In this simplified setting, SCSG reduces to SVRG if we replace line 5 by
$\Nj\sim \mathrm{Unif}\lb \{0, \ldots, \mj - 1\}\rb$, with $\mj \equiv m$
for some positive integer $m$. (Although SVRG is usually implemented in
practice by setting $\Nj$ to be a fixed $m$, a uniform random $\Nj$ is crucial
for the analysis of SVRG \citep{SVRG}.)  SVRG achieves a rate of
$O\lb (n + \kappa)\log (1 / \eps)\rb$ rate only if
\begin{equation}\label{eq:SVRG_cond}
\frac{1}{\mu\eta (1 - 2\eta L)m} + \frac{2\eta L}{1 - 2\eta L} < 1.
\end{equation}
This requires $m > \frac{1}{\mu\eta}$; hence, SVRG requires knowledge
of $\mu$ to achieve the theoretical rate.  We briefly sketch the step
in the proof of the convergence of SVRG where this limitation arises, and we show how geometrization circumvents the need to know $\mu$.  To simplify our arguments we follow \cite{SVRG} and make the assumption that strong convexity holds everywhere for $f$; note that this is stronger than our assumption \textbf{A}2.

In Theorem 1 of \cite{SVRG}, the following argument appears:
\begin{align}
\lefteqn{2\eta \E\la \nabla f(\xj_{k}), \xj_{k} - x^{*}\ra - 4\eta^{2}L
\E (f(\xj_{k}) - f(x^{*}))} \nonumber \\
& \le 4\eta^{2}L \E (f(\xj_{0}) - f(x^{*})) + \E \|\xj_{k} - x^{*}\|_{2}^{2}
- \E \|\xj_{k+1} - x^{*}\|_{2}^{2}.  \label{eq:SVRG1}
\end{align}
Strong convexity implies that
\begin{align}
\lefteqn{2\eta(1 - 2\eta L)\E (f(\xj_{k}) - f(x^{*})) + \eta \mu
\E \|\xj_{k} - x^{*}\|_{2}^{2}} \nonumber\\
&\le 4\eta^{2}L \E (f(\xj_{0}) - f(x^{*})) + \E \|\xj_{k} - x^{*}\|_{2}^{2} - \E \|\xj_{k+1} - x^{*}\|_{2}^{2}.  \nonumber
\end{align}
Note that this conclusion is independent of the choice of $\Nj$ and hence holds for both SVRG and SCSG.  To assess the overall effect of the $j$th inner loop on the left-hand side, we let $k = \Nj$, thereby focusing on the last step of the inner loop, and we substitute $\td{x}_{j}$ for $\xj_{\Nj}$ and
$\td{x}_{j - 1}$ for $\xj_{0}$.  We have:
\begin{align}
\lefteqn{2\eta(1 - 2\eta L)\E (f(\td{x}_{j}) - f(x^{*})) + \eta \mu
\E \|\td{x}_{j} - x^{*}\|_{2}^{2}} \nonumber\\
&\le 4\eta^{2}L \E (f(\td{x}_{j-1}) - f(x^{*})) + \E \|\xj_{\Nj} - x^{*}\|_{2}^{2} - \E \|\xj_{\Nj+1} - x^{*}\|_{2}^{2}.  \label{eq:SVRG3}
\end{align}
For SVRG, $\Nj \sim \mathrm{Unif}\{0, \ldots, m-1\}$, and thus \eqref{eq:SVRG3} reduces to
\begin{align}
\lefteqn{2\eta (1 - 2\eta L)\E (f(\td{x}_{j}) - f(x^{*})) + \eta \mu \E \|\td{x}_{j} - x^{*}\|_{2}^{2}} \nonumber\\
& \le 4\eta^{2}L \E (f(\td{x}_{j-1}) - f(x^{*})) + \frac{1}{m}
\E \|\xj_{0} - x^{*}\|_{2}^{2} - \frac{1}{m}\E \|\xj_{m} - x^{*}\|_{2}^{2}\nonumber\\
& = 4\eta^{2}L \E (f(\td{x}_{j-1}) - f(x^{*})) + \frac{1}{m}
\E \|\td{x}_{j-1} - x^{*}\|_{2}^{2} - \frac{1}{m}\E \|\xj_{m} - x^{*}\|_{2}^{2}. \nonumber
\end{align}
Unfortunately, given that $\xj_{m}\not = \td{x}_{j}$, the last two terms do not telescope, and one has to drop the final term, leading to the following conservative bound:
\begin{align}
  \label{eq:SVRG5}
  \lefteqn{2\eta (1 - 2\eta L)\E (f(\td{x}_{j}) - f(x^{*})) + \eta \mu\E \|\td{x}_{j} - x^{*}\|_{2}^{2}} \\
&\le 4\eta^{2}L \E (f(\td{x}_{j-1}) - f(x^{*})) + \frac{1}{m}\E \|\td{x}_{j-1} - x^{*}\|_{2}^{2}.\nonumber
\end{align}
Without strong convexity (i.e., when $\mu = 0$), $\E \|\td{x}_{j-1} - x^{*}\|_{2}^{2}$ can be arbitrarily larger than $\E (f(\td{x}_{j-1}) - f(x^{*}))$ and hence \eqref{eq:SVRG5} is not helpful. Thus \cite{SVRG} exploit strong convexity at this point, using
$\E \|\td{x}_{j-1} - x^{*}\|_{2}^{2}\le \frac{2}{\mu}\E (f(\td{x}_{j-1}) - f(x^{*}))$.
Then \eqref{eq:SVRG5} implies that
\begin{equation*}
  2\eta (1 - 2\eta L)\E (f(\td{x}_{j}) - f(x^{*})) \le \lb 4\eta^{2}L + \frac{2}{m\mu}\rb \E (f(\td{x}_{j-1}) - f(x^{*})).
\end{equation*}
This requires the coefficient on the left-hand side to be larger than that on the right-hand side, leading to the condition \eqref{eq:SVRG_cond}.

Summarizing, the reason that \cite{SVRG} rely on the knowledge of $\mu$ is that it permits the removal of the last term in \eqref{eq:SVRG3}. By contrast, if $\Nj$ is a geometric random variable instead of a uniform random variable, the problem is
completely circumvented, by making use of the following elementary lemma.

\begin{lemma}\label{lem:geom}
Let $N\sim \mathrm{Geom}(\gamma)$ for $\gamma > 0$. Then for any sequence $D_{0}, D_{1}, \ldots$ with $\E |D_{N}| < \infty$,
\[
\E (D_{N} - D_{N + 1}) = \lb\frac{1}{\gamma} - 1\rb\lb D_{0} - \E D_{N}\rb.
\]
\end{lemma}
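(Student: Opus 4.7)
The lemma is a direct computation with the PMF of a geometric random variable, and the condition $\mathbb{E}|D_N|<\infty$ is exactly what is needed to make the series manipulations rigorous.

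The plan is to write both expectations as explicit series and match them by an index shift. First, by definition of $N\sim\mathrm{Geom}(\gamma)$,
\[
\E D_{N} = \sum_{k=0}^{\infty}(1-\gamma)\gamma^{k}D_{k}, \qquad \E D_{N+1} = \sum_{k=0}^{\infty}(1-\gamma)\gamma^{k}D_{k+1}.
\]
The hypothesis $\E|D_{N}|<\infty$ guarantees absolute convergence of the first sum, and since $P(N=k+1)=\gamma\cdot P(N=k)$, absolute convergence of the second follows from the first. This justifies term-by-term rearrangement.

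Next I would substitute $j=k+1$ in the second series, which gives
\[
\E D_{N+1} = \sum_{j=1}^{\infty}(1-\gamma)\gamma^{j-1}D_{j} = \frac{1}{\gamma}\sum_{j=1}^{\infty}(1-\gamma)\gamma^{j}D_{j} = \frac{1}{\gamma}\bigl(\E D_{N} - (1-\gamma)D_{0}\bigr),
\]
where the last equality adds and subtracts the $j=0$ term of the series for $\E D_{N}$. Subtracting this from $\E D_{N}$ and simplifying yields
\[
\E(D_{N}-D_{N+1}) = \E D_{N} - \tfrac{1}{\gamma}\E D_{N} + \tfrac{1-\gamma}{\gamma}D_{0} = \tfrac{1-\gamma}{\gamma}\bigl(D_{0}-\E D_{N}\bigr),
\]
and since $\tfrac{1-\gamma}{\gamma}=\tfrac{1}{\gamma}-1$, this is the claimed identity.

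There is no real obstacle here: the only thing worth being careful about is the absolute convergence used to justify shifting the index, but that is immediate from $\E|D_{N}|<\infty$. The essential content of the lemma is the memoryless-like shift identity $P(N=k+1)=\gamma P(N=k)$, which is exactly the property that lets the telescoping argument of Section~\ref{sec:SCSG} go through for SCSG even without strong convexity, in contrast to the uniform-$N$ case of SVRG.
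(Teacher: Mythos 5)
Your proof is correct and takes essentially the same approach as the paper: both hinge on the same index shift $j=k+1$, the paper carrying it out inside the combined series $\sum_n (D_n - D_{n+1})\gamma^n(1-\gamma)$ while you first isolate the cleaner intermediate identity $\E D_{N+1}=\tfrac{1}{\gamma}\bigl(\E D_N-(1-\gamma)D_0\bigr)$. Your explicit remark that $\E|D_{N+1}|<\infty$ follows from $\E|D_N|<\infty$ via $P(N=k+1)=\gamma P(N=k)$ is a nice touch that the paper leaves implicit.
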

\begin{remark}
The requirement $\E |D_{N}| < \infty$ is essential. A useful sufficient condition if $|D_{k}| = O(\mathrm{poly}(k))$ because a geometric random variable has finite moments of any order.
\end{remark}
\begin{proof}
By definition,
  \begin{align*}
\lefteqn{\E (D_{N} - D_{N + 1})  = \sum_{n\ge 0}(D_{n} - D_{n+1}) \gamma^{n}(1 - \gamma)} \\
&= (1 - \gamma)\lb D_{0} - \sum_{n\ge 1}D_{n}(\gamma^{n-1} - \gamma^{n})\rb
= (1 - \gamma)\lb \frac{1}{\gamma}D_{0} - \sum_{n\ge 0}D_{n}(\gamma^{n-1} - \gamma^{n})\rb\\
&=  (1 - \gamma)\lb \frac{1}{\gamma}D_{0} - \frac{1}{\gamma}
\sum_{n\ge 0}D_{n}\gamma^{n}(1 - \gamma)\rb = \lb\frac{1}{\gamma} - 1\rb (D_{0} - \E D_{N}),
  \end{align*}
where the last equality is followed by the condition that $\E |D_{N}| < \infty$.
\end{proof}

Returning to \eqref{eq:SVRG3} for SCSG with Lemma \ref{lem:geom} in hand, where $\Nj \sim \mathrm{Geom}(\frac{n}{n + 1})$ and $D_{k} = \E\|\xj_{k} - x^{*}\|_{2}^{2}$, and assuming that $\E D_{\Nj} < \infty$, we obtain
\begin{align}
\lefteqn{2\eta (1 - 2\eta L)\E (f(\td{x}_{j}) - f(x^{*}))
+ \eta \mu \E \|\td{x}_{j} - x^{*}\|_{2}^{2}} \nonumber\\
& \le 4\eta^{2}L \E (f(\td{x}_{j-1}) - f(x^{*})) +
\frac{1}{n}\E \|\xj_{0} - x^{*}\|_{2}^{2} - \frac{1}{n}\E \|\xj_{\Nj} - x^{*}\|_{2}^{2}\nonumber\\
& = 4\eta^{2}L \E (f(\td{x}_{j-1}) - f(x^{*}))
+ \frac{1}{n}\E \|\td{x}_{j-1} - x^{*}\|_{2}^{2} - \frac{1}{n}\E \|\td{x}_{j} - x^{*}\|_{2}^{2}.
\label{eq:SVRG7}
\end{align}
The assumption that $\E D_{\Nj} < \infty$ will be justified in our general theory and is taken for granted here to avoid distraction. \eqref{eq:SVRG7} can be rearranged to yield a function that provides a better assessment of progress than the function in \eqref{eq:SVRG5}:
\begin{align}\label{eq:SVRG8}
\lefteqn{2\eta (1 - 2\eta L)\E (f(\td{x}_{j}) - f(x^{*})) + \lb\frac{1}{n}
+ \eta \mu\rb\E \|\td{x}_{j} - x^{*}\|_{2}^{2}} \nonumber\\
& \le 4\eta^{2}L \E (f(\td{x}_{j-1}) - f(x^{*}))
+ \frac{1}{n}\E \|\td{x}_{j-1} - x^{*}\|_{2}^{2}.
\end{align}
We accordingly view the left-hand side of \eqref{eq:SVRG8} as a Lyapunov function and define:
\[\mathcal{L}_{j} = 2\eta (1 - 2\eta L)\E (f(\td{x}_{j}) - f(x^{*}))
+ \lb\frac{1}{n} + \eta \mu\rb\E \|\td{x}_{j} - x^{*}\|_{2}^{2}.\]
We then have:
\[
\mathcal{L}_{j}\le \max\left\{\frac{2\eta L}{1 - 2\eta L},
\frac{1}{1 + n\eta \mu}\right\}\mathcal{L}_{j-1} \triangleq \lambda^{-1} \mathcal{L}_{j-1}.
\]
As a result,
\[
\mathcal{L}_{T}\le \eps, \quad \forall T \ge \frac{\log
\frac{\mathcal{L}_{0}}{\eps}}{\log \lambda}\Longrightarrow T(\eps)
\le \frac{\log \frac{\mathcal{L}_{0}}{\eps}}{\log \lambda},
\]
and, by \eqref{eq:comp_eps},
\[
\E \comp(\eps) \le 2nT(\eps) = O\lb n \frac{\log \frac{\mathcal{L}_{0}}{\eps}}
{\log \lambda}\rb.
\]
Suppose $\eta L < \frac{1}{6}$,
\[\lambda \ge 1 + (n\eta \mu \wedge 1) \Longrightarrow \frac{1}{\log \lambda}
= O\lb\frac{1}{n\eta \mu \wedge 1}\rb = O\lb \frac{\kappa}{n} + 1\rb.\]
Therefore the complexity of SCSG is
\[\E \comp(\eps) = O\lb (n + \kappa) \log\lb\frac{\mathcal{L}_{0}}{\eps}\rb\rb.\]
In summary, the better control provided by geometrization enables SCSG to achieve the fast rate of SVRG without knowledge of $\mu$.

\section{Convergence Analysis of SCSG for Unregularized Smooth Problems}\label{sec:ana}

\subsection{One-epoch analysis}
We start with the analysis for a single epoch. The key difficulty lies in controlling the bias of $\nuj_{k}$, conditional on $\I_{j}$ drawn at the beginning of the $j$th epoch. We have:
\begin{equation}
  \label{eq:def_ej}
  \E_{\sI_{k}}\nuj_{k} - \nabla f(\xj_{k}) = \nabla f_{\Ij}(\td{x}_{j-1}) - \nabla f(\td{x}_{j - 1})\triangleq \ej.
\end{equation}
We deal with this extra bias by exploiting Lemma \ref{lem:geom} and obtaining the following theorem which connects the iterates produced in consecutive epochs. The proof of the theorem is relegated to Section \ref{subsec:technical_proofs}.

\begin{theorem}\label{thm:one_epoch_simplify_L2}
Fix any $\Gamma \le 1 / 4$. Assume that
\begin{equation}\label{eq:etaj_mj_bj_Bj_cond}
  \etaj L \le \min\left\{\frac{1 - \Gamma}{2}, \Gamma \bj, \frac{\Gamma^{2}\bj\Bj}{2\mj}\right\}, \quad \mj \ge \bj.
\end{equation}
Then under assumptions \textbf{A}1 and \textbf{A}2,
\begin{align}
  \lefteqn{\E (f(\td{x}_{j}) - f(x^{*})) + \lb\frac{2\bj (1 - \Gamma)}{3\etaj \mj} + \frac{\mu}{6}\rb\E \|\td{x}_{j} - x^{*}\|_{2}^{2}} \nonumber\\
& \le 4\Gamma \E (f(\td{x}_{j-1}) - f(x^{*})) + \frac{2\bj (1 - \Gamma)}{3\etaj \mj}\E \|\td{x}_{j-1} - x^{*}\|_{2}^{2} + \frac{2\etaj L}{\Gamma \bj}\frac{\mj}{\Bj} \DH I(\Bj < n)\nonumber.
\end{align}
\end{theorem}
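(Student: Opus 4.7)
The plan is to derive a one-step contraction for a single inner iterate of the $j$th epoch (conditioning on the outer-loop batch $\I_j$ so that the bias $\ej = \nabla f_{\Ij}(\td{x}_{j-1}) - \nabla f(\td{x}_{j-1})$ is a fixed vector), and then stitch those one-step bounds across the inner loop via Lemma \ref{lem:geom}, which is exactly why $\Nj$ is drawn geometrically. The proof is essentially the rigorous version of the heuristic in \eqref{eq:SVRG7}, but with an extra bias term $\ej$, finite mini-batches $\bj$, and subsampled outer batches $\Bj$.

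For the one-step bound I would expand $\|\xj_{k+1} - x^{*}\|_{2}^{2}$ using $\xj_{k+1} = \xj_{k} - \etaj \nuj_{k}$, take $\E_{\sI_{k}}$, and use $\E_{\sI_{k}}\nuj_{k} = \nabla f(\xj_{k}) + \ej$ from \eqref{eq:def_ej}. The inner product $\la \nabla f(\xj_{k}), \xj_{k} - x^{*}\ra$ is handled by convexity plus \textbf{A}2, and the cross-term $\la \ej, \xj_{k} - x^{*}\ra$ is split by Young's inequality into a small multiple of $\|\xj_{k} - x^{*}\|_{2}^{2}$ (later absorbed by strong convexity) and a multiple of $\|\ej\|_{2}^{2}$. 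For $\E\|\nuj_{k}\|_{2}^{2}$ I would use the SVRG variance decomposition together with \textbf{A}1 plus convexity, yielding $\Var(\nuj_{k} \mid \xj_{k}, \Ij) \le (4L/\bj)[(f(\xj_{k}) - f(x^{*})) + (f(\td{x}_{j-1}) - f(x^{*}))]$, while the mean part is controlled by $2L(f(\xj_{k}) - f(x^{*})) + 2\|\ej\|_{2}^{2}$ via $\|\nabla f(\cdot)\|_{2}^{2} \le 2L(f(\cdot) - f(x^{*}))$. Finally, taking expectation over $\Ij$ (uniform, with $\ej = 0$ when $\Bj = n$) and expanding $\nabla f_{i}(\td{x}_{j-1})$ around $x^{*}$ bounds $\E\|\ej\|_{2}^{2}$ by $\frac{I(\Bj<n)}{\Bj}\bigl(4L(f(\td{x}_{j-1}) - f(x^{*})) + 2\H\bigr)$.

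The three stepsize conditions in \eqref{eq:etaj_mj_bj_Bj_cond} are calibrated so this bookkeeping closes: $\etaj L \le (1-\Gamma)/2$ absorbs the smoothness-generated $\etaj^{2}\|\nabla f(\xj_{k})\|_{2}^{2}$ into the $(1-\Gamma)$ slack; $\etaj L \le \Gamma \bj$ tames the variance coefficient in front of $f(\xj_{k}) - f(x^{*})$; and $\etaj L \le \Gamma^{2}\bj\Bj/(2\mj)$ ensures the $f(\td{x}_{j-1}) - f(x^{*})$ contribution coming from $\|\ej\|_{2}^{2}$ ultimately fits inside the target coefficient $4\Gamma$ after one divides through by the geometric factor $\bj/\mj$. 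After this the recursion takes the schematic form
\begin{equation*}
A\,\E\Fk{f}{\xj_{k}} + \td{B}\,\E\|\xj_{k} - x^{*}\|_{2}^{2} \le \E\|\xj_{k} - x^{*}\|_{2}^{2} - \E\|\xj_{k+1} - x^{*}\|_{2}^{2} + C\,\E\Fk{f}{\td{x}_{j-1}} + E\,\DH I(\Bj<n),
\end{equation*}
with $A, \td{B} > 0$, $C/A \le 4\Gamma$, and $E \asymp \etaj^{2}L / \bj$.

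Setting $k = \Nj$ and applying Lemma \ref{lem:geom} with $\gamma = \mj/(\mj+\bj)$ (so $1/\gamma - 1 = \bj/\mj$) to $D_{k} = \E\|\xj_{k} - x^{*}\|_{2}^{2}$ converts the telescoping difference $\E(D_{\Nj} - D_{\Nj+1})$ into $(\bj/\mj)(\|\td{x}_{j-1} - x^{*}\|_{2}^{2} - \E\|\td{x}_{j} - x^{*}\|_{2}^{2})$, since $\xj_{0} = \td{x}_{j-1}$ and $\xj_{\Nj} = \td{x}_{j}$; the integrability hypothesis of Lemma \ref{lem:geom} is immediate because the iterates grow at most polynomially and $\gamma < 1$ (which uses $\mj \ge \bj$). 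Rescaling the resulting bound by $3\etaj/2$ transfers the $\bj/\mj$ telescoping contribution into the prescribed LHS coefficient $2\bj(1-\Gamma)/(3\etaj\mj)$, while a fraction $\mu/6$ of the strong-convexity term is retained on the LHS; the bias factor becomes $\frac{2\etaj L}{\Gamma \bj}\cdot \frac{\mj}{\Bj}\DH I(\Bj<n)$ precisely because $E$ is rescaled by $\mj/\bj$ when dividing through by the geometric factor. The main obstacle will be the Young-inequality calibration in the one-step analysis: every nuisance term (the $\ej$ cross-term, $\|\ej\|_{2}^{2}$, the variance and mean contributions to $\E\|\nuj_{k}\|_{2}^{2}$, and the $\etaj^{2}$-scale smoothness slack) must be split with constants that simultaneously produce the prescribed $4\Gamma$ coefficient, preserve a clean $\mu/6$ of strong convexity, and collapse into the single geometric factor $2\bj(1-\Gamma)/(3\etaj\mj)$. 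The conditions in \eqref{eq:etaj_mj_bj_Bj_cond} are engineered exactly to make this possible, and the bulk of the algebra lies in verifying that each one is used at its tight constant.
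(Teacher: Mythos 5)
Your plan hits a genuine obstruction at the cross-term $\langle e^{(j)}, x_k^{(j)} - x^*\rangle$. You propose to apply Young's inequality, splitting it into $\beta\|x_k^{(j)} - x^*\|_2^2 + \beta^{-1}\|e^{(j)}\|_2^2$ and absorbing the first piece via strong convexity. But the theorem must hold for $\mu = 0$ (assumption \textbf{A}2 allows it), and the conclusion deliberately has \emph{equal} coefficients $\frac{2b_j(1-\Gamma)}{3\eta_j m_j}$ on the $\|\td{x}_j - x^*\|_2^2$ and $\|\td{x}_{j-1} - x^*\|_2^2$ terms when $\mu=0$, so that the multi-epoch argument can telescope. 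Any positive $\beta\|x_k^{(j)} - x^*\|_2^2$ coming out of Young's would, after applying Lemma \ref{lem:geom} and setting $k = N_j$, strictly shrink the left-hand coefficient below the right-hand one, so the inequality cannot close without $\mu > 0$. Choosing $\beta \propto \mu$ repairs the coefficient but blows up the companion $\beta^{-1}\|e^{(j)}\|_2^2$ term as $\mu \to 0$, so there is no tuning that recovers the theorem uniformly in $\mu$.

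The paper avoids this by never bounding $\langle e^{(j)}, x_k^{(j)} - x^*\rangle$ directly. Since $\E_{\I_j} e^{(j)} = 0$ and both $x^*$ and $x_0^{(j)} = \td{x}_{j-1}$ are independent of $\I_j$, one has $\E_j\langle e^{(j)}, x_k^{(j)} - x^*\rangle = \E_j\langle e^{(j)}, x_k^{(j)} - x_0^{(j)}\rangle$; after geometrization ($k = N_j$) this becomes $\E\langle e^{(j)}, \td{x}_j - \td{x}_{j-1}\rangle$, a within-epoch displacement rather than a distance to $x^*$ (this is Lemma \ref{lem:dual}). That residual inner product is then bounded by a \emph{second} application of the same geometrized recursion with $u = \td{x}_{j-1}$ (Lemma \ref{lem:xjxj-1L2}), which produces the $\frac{\eta_j^2 m_j}{b_j}\frac{(1+\gamma_j)^2}{\gamma_j}\E\|e^{(j)}\|_2^2$ penalty with no leftover $\|\cdot - x^*\|_2^2$ term. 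Your outline does not exploit $\E_{\I_j} e^{(j)} = 0$ for the cross-term, nor the double use of Lemma \ref{lem:geom}, and without both of these the $\mu=0$ case fails. The rest of your bookkeeping (variance decomposition via $\E\|\nu_k^{(j)}\|_2^2$, the role of $\E\|e^{(j)}\|_2^2$ via sampling, integrability for Lemma \ref{lem:geom}, and how the three stepsize conditions calibrate the constants) is in the right spirit, but it cannot be carried through until the cross-term is handled this way.
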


\subsection{Multi-epoch analysis}\label{subsec:multi-epoch}
We now turn to the multi-epoch analysis, focusing on using the one-epoch analysis to determine the setting of the hyperparameters. Interestingly, we require that the batch size $\Bj$ scales as the square of the number of inner-loop iterations $\mj$. The proof is relegated to Section \ref{subsec:technical_proofs}.
\begin{theorem}\label{thm:mainL2}
   Fix any constant $\alpha > 1$, $m_{0} > 0$ and  $\xi \in (0, 1)$. Let 
\begin{equation*}
\etaj \equiv \eta, \quad \bj \equiv b, \quad \mj = m_{0}\alpha^{j}, \quad \Bj = \lceil B_{0}\alpha^{2j} \wedge n\rceil.
\end{equation*}
Take $\Gamma = 1 / 4\alpha^{1/\xi}$ and assume that 
\begin{equation*}
m_{0}\ge b / \Gamma \quad 2\eta L\le \min\left\{1 - \Gamma, 2\Gamma b, \Gamma^{2}bB_{0} / m_{0}\right\}.
\end{equation*}
Then 
\[\E (f(\td{x}_{T}) - f(x^{*}))\le \Lambda_{T}^{-1}\frac{\Dx}{2\eta L} + \td{\Lambda}_{T}^{-1}\frac{2\eta L m_{0}}{\Gamma b B_{0}}\DH (T\wedge T_{n}^{*}),\]
where $\Lambda_{T} = \lambda_{T}\lambda_{T-1}\cdots \lambda_{1}$, $\td{\Lambda}_{T} = \td{\lambda}_{T}\td{\lambda}_{T-1}\cdots \td{\lambda}_{1}$, 
\[\lambda_{j} =
  \left\{\begin{array}{cc}
    \alpha & (j\le T_{\kappa}^{*})\\
    \alpha^{1/\xi} & (j > T_{\kappa}^{*})
  \end{array}\right., \quad 
  \td{\lambda}_{j} =
  \left\{\begin{array}{cc}
    \alpha & (j\le T_{n}^{*} \vee T_{\kappa}^{*})\\
    \alpha^{1/\xi} & (j > T_{n}^{*} \vee T_{\kappa}^{*}),
  \end{array}\right.
\]
and $T_{\kappa}^{*}, T_{n}^{*}$ be positive numbers such that
\[\alpha^{T_{\kappa}^{*}} = \frac{1}{\eta \mu}, \quad B_{0}\alpha^{2T_{n}^{*}} = n.\]
\end{theorem}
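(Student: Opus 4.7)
The plan is to introduce a Lyapunov function that aligns with the left-hand side of Theorem \ref{thm:one_epoch_simplify_L2} and then iterate the one-epoch bound to obtain a linear recursion with two distinct contraction regimes separated by the threshold $T_{\kappa}^{*}$. Define $A_{j} = \frac{2b(1-\Gamma)}{3\eta m_{j}}$ so that $A_{j-1}/A_{j} = \alpha$, and set
\[
\Lj = \E\lb f(\td x_{j}) - f(x^{*})\rb + \lb A_{j} + \mu/6\rb \E \|\td x_{j} - x^{*}\|_{2}^{2}.
\]
Under the hypotheses of the theorem, conditions \eqref{eq:etaj_mj_bj_Bj_cond} hold epoch-wise, so Theorem \ref{thm:one_epoch_simplify_L2} gives
\[
\Lj \le 4\Gamma \E\lb f(\td x_{j-1}) - f(x^{*})\rb + A_{j} \E\|\td x_{j-1} - x^{*}\|_{2}^{2} + \ej,
\]
with $\ej = \frac{2\eta L m_{0}}{\Gamma b B_{0}}\alpha^{-j} \DH$ for $j \le T_{n}^{*}$ and $\ej = 0$ otherwise.

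Next, I compare the right-hand side coefficient-by-coefficient against $\lambda_{j}^{-1}\Ljj$. Since $\Gamma = 1/(4\alpha^{1/\xi})$, the function-value coefficient satisfies $4\Gamma = \alpha^{-1/\xi}$, so contraction by $\alpha^{1/\xi}$ is always available on that coordinate. For the distance coordinate,
\[
\frac{A_{j-1} + \mu/6}{A_{j}} = \alpha + \frac{\eta \mu m_{j}}{4b(1-\Gamma)}.
\]
For $j \le T_{\kappa}^{*}$ only the contraction $\alpha$ is guaranteed, while for $j > T_{\kappa}^{*}$ the defining relation $\alpha^{T_{\kappa}^{*}} = 1/(\eta\mu)$, combined with $m_{0} \ge b/\Gamma = 4b\alpha^{1/\xi}$, forces $\eta\mu m_{j}/(4b(1-\Gamma)) \ge \alpha^{1/\xi}$, yielding contraction by $\alpha^{1/\xi}$ here as well. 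This establishes the one-step recursion $\Lj \le \lambda_{j}^{-1}\Ljj + \ej$, which unrolls to
\[
\Lj\big|_{j=T} \le \Lambda_{T}^{-1}\mathcal{L}_{0} + \sum_{j=1}^{T\wedge T_{n}^{*}}\Bigl(\prod_{i=j+1}^{T}\lambda_{i}^{-1}\Bigr)\ej.
\]

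For the initial term, smoothness bounds $\E(f(\td x_{0}) - f(x^{*})) \le \frac{L}{2}\E\|\td x_{0} - x^{*}\|_{2}^{2}$, and the hypothesis $m_{0} \ge b/\Gamma$ combined with $\eta L \le (1-\Gamma)/2$ and $\mu \le L$ gives $L/2 + A_{0} + \mu/6 \le 1/(2\eta)$, hence $\mathcal{L}_{0} \le \Dx/(2\eta L)$, matching the leading term. For the noise sum, I split into the three cases $T \le T_{\kappa}^{*}$, $T_{\kappa}^{*} < T \le T_{n}^{*}$, and $T > T_{n}^{*} \vee T_{\kappa}^{*}$, and in each case bound the sum by its largest term times the index length $T \wedge T_{n}^{*}$. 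The reason $\td{\Lambda}_{T}$ rather than $\Lambda_{T}$ appears is that error injected at an epoch $j \le T_{n}^{*}$ only begins to benefit from the faster contraction $\alpha^{1/\xi}$ once both $j > T_{\kappa}^{*}$ and $j > T_{n}^{*}$; consequently the effective threshold separating slow and fast contraction for the noise component is $T_{\kappa}^{*} \vee T_{n}^{*}$.

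The main obstacle is the noise bookkeeping: checking that the geometric sum $\sum_{j}\prod_{i>j}\lambda_{i}^{-1}\cdot \alpha^{-j}$ is dominated uniformly by $(T\wedge T_{n}^{*})\,\td{\Lambda}_{T}^{-1}$ requires tracking how the contraction exponent interacts with the $\alpha^{-j}$ decay of $\ej$ across $T_{\kappa}^{*}$, and the worst contribution comes from $j$ near $T_{\kappa}^{*}\wedge T_{n}^{*}$, where $\ej$ is not yet small while the remaining epochs contract only at rate $\alpha$. A secondary subtlety is confirming that the epoch-wise stepsize condition $2\eta L \le \Gamma^{2} b B_{j}/m_{j}$ remains valid for every $j$ under the fixed $\eta, b$: for $j \le T_{n}^{*}$ the ratio $B_{j}/m_{j}$ grows geometrically in $j$ and the base case follows from the assumption on $(m_{0}, B_{0})$, while for $j > T_{n}^{*}$ the ratio shrinks but the noise term vanishes and a separate, simpler variance bound (no bias term from $\ej$) can be invoked.
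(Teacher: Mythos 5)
Your proposal is correct and follows essentially the same route as the paper: the same Lyapunov function, the same coefficient-by-coefficient contraction analysis split at $T_{\kappa}^{*}$, and the same bound $\mathcal{L}_{0}\le \Dx/(2\eta L)$; where the paper proves the unrolled inequality by an explicit induction, you unroll the recursion directly and bound the noise sum term-by-term by $\td{\Lambda}_{T}^{-1}$, which is an equivalent bookkeeping. You also correctly flag a subtlety the paper's exposition glosses over, namely that the hypothesis $\eta L\le \Gamma^{2}b B_{j}/2m_{j}$ of Theorem~\ref{thm:one_epoch_simplify_L2} can eventually fail once $B_{j}=n$; the resolution is, as you note, that $e_{j}\equiv 0$ in that regime so the corresponding error terms in the one-epoch bound vanish and that condition is no longer needed.
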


\begin{remark}\label{rem:practical}
  In practice, we recommend the following setting as a default: 
  \[\alpha = 1.25, \quad m_{0} = 5B_{0} = 50b.\]
  This setting works well as demonstrated in Appendix \ref{app:empirical} in the Supplementary Material. For those examples, $b$ was chosen as $10^{-4}n$ for fair comparison. Here we choose $B_{0}$ smaller than $m_{0}$ because the batch size $B_{t}$ grows faster than the inner loop size $m_{t}$ and the variance reduction is more effective in later stages. Under this setting, SCSG takes $\lceil \log (n / B_{0}) / 2\log \alpha\rceil = 16$ passes for $B_{t}$ to reach $n$. This creates a reasonably long transition from little to full variance reduction. 
\end{remark}

\subsection{Complexity analysis}
Under the specification of Theorem \ref{thm:mainL2} and recalling the definition of $T(\eps)$ in \eqref{eq:Teps}, we have
\begin{equation*}
  \sum_{j=1}^{T(\eps)}\mj = m_{0}\sum_{j=1}^{T(\eps)}\alpha^{j} = O\lb \alpha^{T(\eps)}\rb.
\end{equation*}
On the other hand, 
\[\sum_{j=1}^{T(\eps)}\Bj = O\lb \sum_{j=1}^{T(\eps)}(\alpha^{2j}\wedge n)\rb = O\lb \min\left\{\sum_{j=1}^{T(\eps)}\alpha^{2j}, \sum_{j=1}^{T(\eps)}n\right\}\rb = O\lb\alpha^{2T(\eps)}\wedge nT(\eps)\rb.\]
By \eqref{eq:generic_complexity}, we conclude that
\begin{equation}\label{eq:Ecompeps}
  \E\comp(\eps) = O\lb \alpha^{T(\eps)} + \alpha^{2T(\eps)} \wedge nT(\eps)\rb = O\lb \alpha^{2T(\eps)} \wedge (\alpha^{T(\eps)} + nT(\eps))\rb.
\end{equation}
The following theorem gives the size of $T(\eps)$ and thus provides the theoretical complexity of SCSG. The proof is relegated to Section \ref{subsec:technical_proofs}.
\begin{theorem}\label{thm:complexityL2}
Under the specification of Theorem \ref{thm:mainL2}, we have 
\[\E\comp(\eps) = O\lb A(\eps)^{2}\wedge \lb A(\eps) + n\log A(\eps)\rb\rb,\]
where
\[A(\eps) = \td{O}\lb\min\left\{\frac{\D}{\eps}, \kappa\lb\frac{\Dx}{\eps \kappa}\rb^{\xi}_{*} + \frac{\DH}{\eps}, \td{\kappa}\lb\frac{\D}{\eps \td{\kappa}}\rb^{\xi}_{*}\right\}\rb, \quad \td{\kappa} = \sqrt{n} + \kappa.\]
In particular,
\begin{equation}\label{eq:mainL2_complexity}
\E\comp(\eps) = \td{O}\lb \min\left\{\frac{D^{2}}{\eps^{2}}, \frac{\DH^{2}}{\eps^{2}} + \kappa^{2}\lb \frac{\Dx}{\eps\kappa}\rb_{*}^{2\xi}, n + \frac{\D}{\eps}, n + \td{\kappa}\lb\frac{\D}{\eps \td{\kappa}}\rb^{\xi}_{*}\right\}\rb.
\end{equation}
\end{theorem}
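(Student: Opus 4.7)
The plan is to translate the error bound from Theorem~\ref{thm:mainL2} into an upper bound on $\alpha^{T(\eps)}$ and then substitute into \eqref{eq:Ecompeps}.

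First I would demand that each of the two terms in the bound of Theorem~\ref{thm:mainL2} be at most $\eps/2$. The factor $T\wedge T_{n}^{*}$ in the second term is bounded by $T_{n}^{*}=O(\log n/\log \alpha)$, a polylogarithmic quantity that will be absorbed into $\td{O}(\cdot)$. The two conditions then become $\Lambda_{T}\gtrsim \Dx/\eps$ and $\td{\Lambda}_{T}\gtrsim \DH/\eps$ (up to polylog factors in the second), and $T(\eps)$ is the maximum of the smallest $T_{1},T_{2}$ satisfying each.

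Next I would invert the piecewise product structure. Both $\Lambda_{T}$ and $\td{\Lambda}_{T}$ grow at rate $\alpha$ up to a threshold epoch and at rate $\alpha^{1/\xi}$ thereafter; the thresholds are $T_{\kappa}^{*}$ and $T_{n}^{*}\vee T_{\kappa}^{*}$ respectively, with cumulative products $\alpha^{T_{\kappa}^{*}}\sim \kappa$ and $\alpha^{T_{n}^{*}\vee T_{\kappa}^{*}}\sim \td{\kappa}$ (since $\sqrt{n/B_{0}}\vee(1/\eta\mu)\asymp \sqrt{n}+\kappa$). A direct calculation shows that the smallest $T$ achieving $\Lambda_{T}\ge C$ satisfies $\alpha^{T}=O(\min\{C,\,\kappa[C/\kappa]_{*}^{\xi}\})$, and analogously for $\td{\Lambda}_{T}$ with $\td{\kappa}$ in place of $\kappa$. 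Plugging in $C=\Dx/\eps$ and $C=\DH/\eps$ yields
\[
\alpha^{T_{1}}=O\!\left(\min\{\Dx/\eps,\,\kappa[\Dx/(\kappa\eps)]_{*}^{\xi}\}\right),\quad \alpha^{T_{2}}=\td{O}\!\left(\min\{\DH/\eps,\,\td{\kappa}[\DH/(\td{\kappa}\eps)]_{*}^{\xi}\}\right),
\]
whence $\alpha^{T(\eps)}\le \alpha^{T_{1}}+\alpha^{T_{2}}$.

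The third step consolidates these two estimates into the three branches of $A(\eps)$. The first branch $\D/\eps$ follows from $\Dx,\DH\le \D$; the middle branch $\kappa[\Dx/(\kappa\eps)]_{*}^{\xi}+\DH/\eps$ is immediate by choosing the second option for $T_{1}$ and the first for $T_{2}$; the third branch requires verifying $\kappa[\Dx/(\kappa\eps)]_{*}^{\xi}\le \td{\kappa}[\Dx/(\td{\kappa}\eps)]_{*}^{\xi}$ (and similarly for $\DH$), which splits on whether $\Dx/(\td{\kappa}\eps)$ is above or below $1$ and uses $\kappa\le \td{\kappa}$ together with the monotonicity of $[\cdot]_{*}^{\xi}$. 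This gives $\alpha^{T(\eps)}=\td{O}(A(\eps))$ and hence $T(\eps)=O(\log A(\eps))$. Substituting into \eqref{eq:Ecompeps} yields $\E\comp(\eps)=\td{O}(A(\eps)^{2}\wedge (A(\eps)+n\log A(\eps)))$, and \eqref{eq:mainL2_complexity} follows by distributing the outer minimum across the three branches of $A(\eps)$: squaring the first and middle branches via $(a+b)^{2}\le 2(a^{2}+b^{2})$, and using the $A(\eps)+n\log A(\eps)$ form for the first and third branches to introduce the additive $n$. The main obstacle is the third-branch inequality in the consolidation step, since the saturation of $[\cdot]_{*}^{\xi}$ at $1$ prevents a naive monotonicity argument; careful case analysis around the point $\Dx/(\td{\kappa}\eps)=1$ (and likewise for $\DH$) is required.
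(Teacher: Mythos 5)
Your proposal is correct and follows essentially the same route as the paper's proof: split $T(\eps)$ into $T^{(1)},T^{(2)}$ from the two terms of Theorem~\ref{thm:mainL2}, invert the piecewise-geometric products $\Lambda_T$ and $\td{\Lambda}_T$ around the thresholds $T_\kappa^*$ and $T_n^*\vee T_\kappa^*$, sum the two bounds, and substitute into \eqref{eq:Ecompeps}. The only slight difference in emphasis is your third-branch consolidation: the paper disposes of $\kappa(\Dx/(\eps\kappa))_*^\xi \le \td\kappa(\Dx/(\eps\td\kappa))_*^\xi$ in one stroke by observing that both $\kappa\mapsto\kappa$ and $\kappa\mapsto\kappa^{1-\xi}(\Dx/\eps)^\xi$ are increasing in $\kappa$ (so the pointwise max is monotone), whereas you propose an explicit case split at $\Dx/(\td\kappa\eps)=1$; both arguments are valid and equivalent.
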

\begin{remark}
The version of SCSG considered in Theorem \ref{thm:complexityL2} that achieves the complexity \eqref{eq:mainL2_complexity} is $\eps$-independent and almost universal.
\end{remark}

\subsection{Discussion}
Our complexity result involves the unusual terms $\lb\frac{\Dx}{\eps\kappa}\rb^{2\xi}_{*}$ and $\lb\frac{\D}{\eps\kappa}\rb^{2\xi}_{*}$. However, they are relatively insignificant as the exponent $\xi$ can be made arbitrarily small and $\frac{1}{\eps \kappa}$ is small in practice unless the target accuracy is unusually high. For instance, under the setting given in Remark \ref{rem:practical}, $\xi$ can be as small as $\log\alpha / \log (m_{0} / 4b) \approx 0.088$ to guarantee the condition $m_{0} \ge b / \Gamma$ as long as the stepsize is sufficiently small. Thus, the term $\lb\frac{\Dx}{\eps \kappa}\rb^{2\xi}_{*}$ is generally negligible. If we use $\tto$ to denote a bound that hides these terms and the logarithmic terms, we have 
\begin{equation}\label{eq:tto_complexity}
\E \comp(\eps) = \tto\lb \frac{\D^{2}}{\eps^{2}}\wedge \left\{\kappa^{2} + \frac{\DH^{2}}{\eps^{2}}\right\}\wedge \left\{n + \frac{\D}{\eps}\right\} \wedge \left\{n + \kappa\right\} \rb.
\end{equation}
We discuss some of the consequences of \eqref{eq:tto_complexity}. 

\subsubsection{Adaptivity to target accuracy}

For non-strongly convex objectives, \eqref{eq:tto_complexity} implies that
\begin{equation}\label{eq:adapt_accuracy1}
\E \comp(\eps) = \tto\lb \frac{\D^{2}}{\eps^{2}}\wedge \left\{n + \frac{\D}{\eps}\right\}\rb.
\end{equation}
whereas, for strongly convex objectives, \eqref{eq:tto_complexity} implies that
\begin{equation}\label{eq:adapt_accuracy2}
\E \comp(\eps) = \tto\lb  \left\{\kappa^{2} + \frac{\DH^{2}}{\eps^{2}}\right\}\wedge \left\{n + \kappa\right\}\rb.
\end{equation}
Both \eqref{eq:adapt_accuracy1} and \eqref{eq:adapt_accuracy2} exhibit the adaptivity of SCSG to the target accuracy: for low-accuracy computation (i.e., large $\eps$), SCSG achieves the same complexity as SGD for non-strongly convex objectives, which can be much more efficient than SVRG-type algorithms in the setting of large datasets (i.e., large $n$).  On the other hand, for high-accuracy computation (i.e., small $\eps$), SCSG avoids the high variance of SGD and achieves the same complexity as SVRG++ \citep{Zhu15} for non-strongly convex objectives and as SVRG for strongly convex objectives \citep{SVRG}.


\subsubsection{Adaptivity to strong convexity}

The first two terms of \eqref{eq:tto_complexity} are independent of $n$:
\begin{equation}\label{eq:adapt_accuracy3}
\E \comp(\eps) = \tto\lb \frac{\D^{2}}{\eps^{2}}\wedge \left\{\kappa^{2} + \frac{\DH^{2}}{\eps^{2}}\right\}\rb.
\end{equation}
The last two terms of \eqref{eq:tto_complexity} depend on $n$ but have better dependence on $\eps$:
\begin{equation}
  \label{eq:adapt_accuracy4}
\E\comp(\eps) = \tto\lb\left\{n + \frac{\D}{\eps}\right\} \wedge \left\{n + \kappa\right\}\rb.
\end{equation}
Both \eqref{eq:adapt_accuracy3} and \eqref{eq:adapt_accuracy4} show the adaptivity of SCSG to strong convexity. In both cases, if $\kappa <\!\!< \frac{1}{\eps}$, SCSG benefits from the strong convexity: for the former, \eqref{eq:adapt_accuracy3} yields
\[\E \comp(\eps) = \tto\lb \kappa^{2} + \frac{\DH^{2}}{\eps^{2}}\rb,\]
which can be much smaller than $O\lb\frac{\D^{2}}{\eps^{2}}\rb$, if $\Dx >\!\!> \DH$.  For the latter, \eqref{eq:adapt_accuracy4} yields
\[\E \comp(\eps) = \tto\lb n + \kappa\rb,\]
which is the same as SVRG, but without the knowledge of $\mu$. On the other hand, in ill-conditioned problems where $\kappa >\!\! > \frac{1}{\eps}$, SCSG still achieves the best of SGD and SVRG++ \citep{Zhu15} for non-strongly convex objectives. This is not achieved by Adaptive SVRG \citep{xu17} and only partially achieved by R-SVRG \citep{SCSG}, which requires two sequences of iterates. Finally, although SAG and SAGA provide guarantees in ill-conditioned problems, they have an inferior complexity of $\td{O}\lb \frac{n}{\eps}\wedge (n + \kappa)\rb$. 

\subsubsection{Weaker requirement on gradients}

For algorithms without access to full gradients, it is necessary to impose some conditions on $\nabla f_{i}(x)$. The strongest condition imposes a uniform bound (see, e.g., \citep{Nemirovsky09}):
\begin{equation}\label{eq:sigma2}
\sigma^{2}\triangleq \max_{i}\sup_{x}\|\nabla f_{i}(x)\|_{2}^{2} < \infty,
\end{equation}
while a slightly milder condition imposes the following bound (see, e.g., \citep{minibatchSGD})
\begin{equation}\label{eq:A2}
A^{2}\triangleq \sup_{x}\frac{1}{n}\sum_{i=1}^{n}\|\nabla f_{i}(x) - \nabla f(x)\|_{2}^{2} < \infty.
\end{equation}
These two types of conditions are typical in analyses of SGD when $f_{i}$ is not assumed to be convex. This is satisfied by many practical problems; e.g., generalized linear models. In our situation, the extra assumption on the convexity of each component allows us to relax assumptions such as \eqref{eq:sigma2} or \eqref{eq:A2} into 
\begin{equation*}
  \H\triangleq \frac{1}{n}\sum_{i=1}^{n}\|\nabla f_{i}(x^{*})\|_{2}^{2} < \infty.
\end{equation*}
First it is easy to show that
\[\H \le A^{2}\le \sigma^{2}.\]
More importantly, $\H$ can be much smaller than the other two measures, and there are common situations where $A^{2} = \sigma^{2} = \infty$ while $\H < \infty$. For instance, in least-squares problems where $f_{i}(x) = \frac{1}{2}(a_{i}^{T}x - b_{i})^{2}$, $A^{2} = \sigma^{2} = \infty$ unless the domain is bounded. Although assuming a bounded domain is a common assumption in the literature, there is generally no guarantee, at least for algorithms involving stochasticity, that the iterate will stay in the domain unless a projection step is performed. However, the projection step is never performed in practice and thus the bounded domain assumption is artificial. By contrast, \cite{SCSG} show that 
\[\H \le \frac{2}{n}\sum_{i=1}^{n}(2b_{i}^{2} - f_{i}(x^{*}))\le \frac{4\sum_{i=1}^{n}b_{i}^{2}}{n},\]
for least-squares problems, without a bounded domain. This implies that $\H = O(1)$ provided that $\frac{1}{n}\sum_{i=1}^{n}b_{i}^{2} = O(1)$. Similar bounds can be derived for generalized linear models \citep{SCSG}. It turns out that $\H = O(1)$ for various applications where there is no guarantee for $\sigma^{2}$ or $A^{2}$. We refer the readers to \cite{SCSG} for an extensive discussion.

\subsubsection{Optimality of the complexity bound}

To the best of our knowledge, SCSG is the first stochastic algorithm that achieves adaptivity to both target accuracy and strong convexity. However, it is still illuminating to compare each component of \eqref{eq:tto_complexity} separately with the best achievable rate in the literature. 

\begin{itemize}
\item The first component $\tto\lb\frac{\D^{2}}{\eps^{2}}\rb$ is optimal in terms of $\eps$-dependence for non-strongly convex objectives \citep{Agarwal10, woodworth16}. Under slightly stronger assumptions on the gradient bounds (but without the convexity of each $f_{i}$), mini-batched SGD achieves the $O\lb\frac{1}{\eps^{2}}\rb$ rate \citep{Nemirovsky09, minibatchSGD}. However, the dependence on $\D$ is suboptimal. Without knowing $\Dx$ and $\sigma^{2}$ or $A^{2}$, defined in \eqref{eq:sigma2} and \eqref{eq:A2} , the resulting complexity of (mini-batched) SGD can be no better than $O\lb\frac{(\Dx \vee A)^{2}}{\eps^{2}}\rb \succeq O\lb\frac{D^{2}}{\eps^{2}}\rb$. When they are known, \cite{Nemirovsky09} and \cite{minibatchSGD} are able to improve it to $O\lb\frac{\Dx\sigma}{\eps^{2}}\rb$ and $O\lb\frac{\Dx A}{\eps^{2}}\rb$. With averaging, \cite{juditsky11} improve it to
\[O\lb\frac{\Dx}{\eps} + \frac{\sigma^{2}}{\eps^{2}}\rb\succeq O\lb\frac{\Dx}{\eps} + \frac{\DH^{2}}{\eps^{2}}\rb.\]
With momentum acceleration, \cite{lan12} further improve the rate to
\begin{equation}\label{eq:SGD_optimal}
O\lb\sqrt{\frac{\Dx}{\eps}} + \frac{\sigma^{2}}{\eps^{2}}\rb\succeq O\lb\sqrt{\frac{\Dx}{\eps}} + \frac{\DH^{2}}{\eps^{2}}\rb.
\end{equation}
\item The second component $\tto\lb \kappa^{2} + \frac{\DH^{2}}{\eps^{2}}\rb$ is new to the best of our knowledge. When $\mu$ is known and $\E\|\nabla  f_{i}(x)\|_{2}^{2}$ is uniformly bounded for all $i$ and $x$, a requirement that is more stringent than our setting, it is known that the optimal complexity in terms of $\eps$-dependence and $\mu$-dependence is $O\lb\frac{\kappa}{\eps}\rb$; see, e.g., \cite{hazan10, Rakhlin12} for the upper bound and \cite{woodworth16} for the lower bound. However, the lower bound is established under the condition that $\mu$ is known. It remains an interesting direction to derive a tight lower bound when $\mu$ is unknown.
\item The third component $\tto\lb n + \frac{\D}{\eps}\rb$ should be sub-optimal in terms of both $\eps$ and $\D$. SVRG++ \citep{Zhu15} achieves the $\td{O}\lb n + \frac{\Dx}{\eps}\rb$ rate. By adding momentum terms, Adaptive SVRG \citep{nitanda16} slightly improves the rate to $\td{O}\lb \left\{n + \frac{\Dx}{\eps}\right\}\wedge n\sqrt{\frac{\Dx}{\eps}}\rb$. On the other hand, \cite{woodworth16} prove a lower bound $\td{O}\lb n + \sqrt{\frac{n\Dx}{\eps}}\rb$. This can be achieved by Accelerated SDCA \citep{APSDCA} or Katyusha \citep{Katyusha}. However, the former has only been established for particular problems such as generalized linear models, and the latter involves black-box acceleration \citep{blackbox}, which requires setting the parameters based on unknown quantities such as $\Dx$. The Varag algorithm \cite{lan2019unified}, which came after our work, is the first algorithm that achieves the lower bound for generic finite-sum optimization problems. 
\item The last component $\tto\lb n + \kappa\rb$ has been proved by \cite{arjevani17} to be optimal, up to small factors $\lb\frac{\DH}{\eps\kappa}\rb^{2\xi}_{*}$, for a large class of algorithms when $\mu$ is unknown. The story is different when $\mu$ is known. The optimal complexity can be improved to $\td{O}(n + \sqrt{n\kappa})$ and can be achieved by, for instance, by Katyusha \citep{Katyusha}.
\end{itemize}

In summary, a major remaining challenges is to derive oracle lower bounds involving all of $\eps, n, \Dx, \DH$, for $\eps$-independent and almost universal algorithms.

\subsection{Proofs}\label{subsec:technical_proofs}
\subsubsection{Lemmas}
We start by four lemmas with proofs presented in Section \ref{app:ana} in Supplementary Material. The first lemma gives an upper bound of the expected squared norm of $\nuj_{k}$, which is standard in the analyses of most first-order methods.
\begin{lemma}\label{lem:nuknorm}
Under assumption \textbf{A}1,
\begin{align*} 
\lefteqn{\E_{\sI_{k}} \|\nuj_{k}\|_{2}^{2}}\\
&\le \frac{2L}{\bj} (f(\xj_{0}) - f(\xj_{k})) + \frac{2L}{\bj} \la\nabla f(\xj_{k}), \xj_{k} - \xj_{0}\ra + 2\|\nabla f(\xj_{k})\|_{2}^{2} + 2\|\ej\|_{2}^{2},
\end{align*}
where $\ej$ is defined in \eqref{eq:def_ej}.
\end{lemma}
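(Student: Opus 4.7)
\textbf{Proof plan for Lemma \ref{lem:nuknorm}.}

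The plan is to exploit the control variate structure of $\nuj_{k}$ together with the classical co-coercivity inequality for convex $L$-smooth functions. First I would write
\[
\nuj_{k} = W_{\sI_{k}} + \mu_{j}, \qquad W_{\sI_{k}} = \nabla f_{\sI_{k}}(\xj_{k}) - \nabla f_{\sI_{k}}(\xj_{0}),
\]
and note that, conditional on $\Ij$ (and thus on $\mu_{j}$ and $\xj_{0}$),
$\E_{\sI_{k}} W_{\sI_{k}} = \nabla f(\xj_{k}) - \nabla f(\xj_{0})$, so that
$\E_{\sI_{k}} \nuj_{k} = \nabla f(\xj_{k}) + \ej$.

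Next, I would expand
\[
\E_{\sI_{k}}\|\nuj_{k}\|_{2}^{2} = \E_{\sI_{k}}\|W_{\sI_{k}}\|_{2}^{2} + 2\la \nabla f(\xj_{k}) - \nabla f(\xj_{0}),\, \mu_{j}\ra + \|\mu_{j}\|_{2}^{2},
\]
and complete the square by writing $\mu_{j} = \ej + \nabla f(\xj_{0})$ to get
\[
\E_{\sI_{k}}\|\nuj_{k}\|_{2}^{2} = \E_{\sI_{k}}\|W_{\sI_{k}}\|_{2}^{2} - \|\nabla f(\xj_{k}) - \nabla f(\xj_{0})\|_{2}^{2} + \|\nabla f(\xj_{k}) + \ej\|_{2}^{2}.
\]
The bias--variance identity $\E_{\sI_{k}}\|W_{\sI_{k}}\|_{2}^{2} = \|\nabla f(\xj_{k}) - \nabla f(\xj_{0})\|_{2}^{2} + \Var_{\sI_{k}}(W_{\sI_{k}})$ then cancels the first negative term, leaving
\[
\E_{\sI_{k}}\|\nuj_{k}\|_{2}^{2} = \Var_{\sI_{k}}(W_{\sI_{k}}) + \|\nabla f(\xj_{k}) + \ej\|_{2}^{2} \le \Var_{\sI_{k}}(W_{\sI_{k}}) + 2\|\nabla f(\xj_{k})\|_{2}^{2} + 2\|\ej\|_{2}^{2},
\]
where the last step uses $\|a+b\|_{2}^{2}\le 2\|a\|_{2}^{2} + 2\|b\|_{2}^{2}$.

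Finally, I would bound the variance of the mini-batch average. Since $\sI_{k}$ is a uniform sample of size $\bj$ (with or without replacement---either way the finite-population correction only helps),
\[
\Var_{\sI_{k}}(W_{\sI_{k}}) \le \frac{1}{\bj}\cdot \frac{1}{n}\sum_{i=1}^{n}\|\nabla f_{i}(\xj_{k}) - \nabla f_{i}(\xj_{0})\|_{2}^{2}.
\]
The standard co-coercivity bound for convex functions with $L$-Lipschitz gradients (i.e., $\|\nabla f_{i}(x) - \nabla f_{i}(y)\|_{2}^{2} \le 2L[f_{i}(x) - f_{i}(y) - \la\nabla f_{i}(y), x - y\ra]$, applied with $x=\xj_{0}$, $y=\xj_{k}$ and averaged over $i$) gives
\[
\frac{1}{n}\sum_{i=1}^{n}\|\nabla f_{i}(\xj_{k}) - \nabla f_{i}(\xj_{0})\|_{2}^{2} \le 2L(f(\xj_{0}) - f(\xj_{k})) + 2L\la \nabla f(\xj_{k}), \xj_{k} - \xj_{0}\ra.
\]
Combining yields exactly the stated inequality. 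No step should be a serious obstacle; the only subtle point is recognizing that one must keep $\|\nabla f(\xj_{k}) + \ej\|_{2}^{2}$ intact long enough for the bias--variance identity to cancel $\|\nabla f(\xj_{k}) - \nabla f(\xj_{0})\|_{2}^{2}$, which is what permits the tight constant $2L/\bj$ (rather than a looser $4L/\bj$) in front of the Bregman-type quantity.
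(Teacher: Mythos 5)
Your proof is correct and arrives at the paper's own intermediate identity $\E_{\sI_{k}}\|\nuj_{k}\|_{2}^{2} = \Var_{\sI_{k}}(W_{\sI_{k}}) + \|\nabla f(\xj_{k}) + \ej\|_{2}^{2}$, then finishes with the same mini-batch variance bound (Lemma~\ref{lem:var_sampling}), co-coercivity (Lemma~\ref{lem:cocoercive}), and $\|a+b\|_{2}^{2}\le 2\|a\|_{2}^{2}+2\|b\|_{2}^{2}$. The only difference is that the paper applies the identity $\E\|Z\|_{2}^{2}=\E\|Z-\E Z\|_{2}^{2}+\|\E Z\|_{2}^{2}$ directly to $Z=\nuj_{k}$ (noting that $\mu_{j}$ is deterministic under $\E_{\sI_{k}}$, so $\nuj_{k}-\E_{\sI_{k}}\nuj_{k}=W_{\sI_{k}}-\E_{\sI_{k}}W_{\sI_{k}}$), whereas you expand $\|W_{\sI_{k}}+\mu_{j}\|_{2}^{2}$ and re-complete the square before invoking the bias--variance identity on $W_{\sI_{k}}$; the two routes are algebraically equivalent, and the paper's is just a bit more direct.
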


The second lemma gives an upper bound for $\E \|\ej\|_{2}^{2}$. 
\begin{lemma}\label{lem:ejL2}
Under assumption \textbf{A}1,
\[\E \|\ej\|_{2}^{2}\le \frac{2}{\Bj}\left\{ 2L\E (f(\td{x}_{j-1}) - f(x^{*})) + \H \, I(\Bj < n)\right\}.\]  
\end{lemma}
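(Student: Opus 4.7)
The plan is to bound the conditional second moment of $\ej$ given $\td{x}_{j-1}$ using a standard variance-of-sample-mean calculation, and then control the resulting sum $\frac{1}{n}\sum_{i}\|\nabla f_{i}(\td{x}_{j-1})\|_{2}^{2}$ via a smoothness-plus-convexity inequality.

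First, since $\Ij$ is a uniformly random subset of $\{1,\ldots,n\}$ of size $\Bj$, we have $\E_{\Ij}\nabla f_{\Ij}(\td{x}_{j-1}) = \nabla f(\td{x}_{j-1})$ and hence $\E[\ej\mid\td{x}_{j-1}] = 0$. The standard sampling-without-replacement variance identity gives
\[
\E\bigl[\|\ej\|_{2}^{2}\bigm|\td{x}_{j-1}\bigr] = \frac{n-\Bj}{(n-1)\Bj}\cdot\frac{1}{n}\sum_{i=1}^{n}\|\nabla f_{i}(\td{x}_{j-1}) - \nabla f(\td{x}_{j-1})\|_{2}^{2}.
\]
The prefactor equals $0$ when $\Bj = n$ and is always at most $I(\Bj < n)/\Bj$. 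Using $\E\|X-\E X\|_{2}^{2}\le \E\|X\|_{2}^{2}$ for the uniform draw over $i$, we obtain
\[
\E\bigl[\|\ej\|_{2}^{2}\bigm|\td{x}_{j-1}\bigr] \le \frac{I(\Bj<n)}{\Bj}\cdot\frac{1}{n}\sum_{i=1}^{n}\|\nabla f_{i}(\td{x}_{j-1})\|_{2}^{2}.
\]

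Second, I would control the right-hand side by splitting around the optimum. By $\|a+b\|_{2}^{2}\le 2\|a\|_{2}^{2} + 2\|b\|_{2}^{2}$,
\[
\frac{1}{n}\sum_{i=1}^{n}\|\nabla f_{i}(x)\|_{2}^{2}\le \frac{2}{n}\sum_{i=1}^{n}\|\nabla f_{i}(x)-\nabla f_{i}(x^{*})\|_{2}^{2} + 2\H.
\]
The classical co-coercivity inequality, valid for any $L$-smooth convex function, yields
\[
\|\nabla f_{i}(x)-\nabla f_{i}(x^{*})\|_{2}^{2}\le 2L\bigl[f_{i}(x) - f_{i}(x^{*}) - \ik{f_{i}}{x}{x^{*}}\bigr].
\]
Averaging over $i$ and using $\frac{1}{n}\sum_{i}\nabla f_{i}(x^{*}) = \nabla f(x^{*}) = 0$ (since $x^{*}$ is a minimizer of $f$), the cross term vanishes and we get
\[
\frac{1}{n}\sum_{i=1}^{n}\|\nabla f_{i}(x)-\nabla f_{i}(x^{*})\|_{2}^{2}\le 2L\bigl(f(x)-f(x^{*})\bigr).
\]

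Combining the three bounds at $x = \td{x}_{j-1}$ and taking the outer expectation gives
\[
\E\|\ej\|_{2}^{2}\le \frac{I(\Bj<n)}{\Bj}\bigl\{4L\,\E(f(\td{x}_{j-1}) - f(x^{*})) + 2\H\bigr\},
\]
which is exactly (and slightly sharper than) the stated bound --- the version in the lemma simply absorbs the indicator into the $\H$ term only, at the cost of being loose by a factor $4L/n\cdot\E(f(\td{x}_{j-1})-f(x^{*}))$ when $\Bj = n$, which is acceptable. No step here poses a real obstacle; the only slightly delicate point is the correct use of the sampling-without-replacement variance formula to justify the $I(\Bj<n)$ factor, together with the smoothness-convexity identity which relies on each $f_{i}$ being convex (assumption \textbf{A}1).
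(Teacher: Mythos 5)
Your proof is correct and uses the same three ingredients as the paper's: the sampling-without-replacement variance bound (the paper's Lemma \ref{lem:var_sampling}), the $\|a+b\|_{2}^{2}\le 2\|a\|_{2}^{2}+2\|b\|_{2}^{2}$ split around $x^{*}$, and the co-coercivity inequality (Lemma \ref{lem:cocoercive}) together with $\nabla f(x^{*})=0$. The only difference is cosmetic ordering --- the paper splits $\ej$ into $(\ej - \nabla f_{\Ij}(x^{*})) + \nabla f_{\Ij}(x^{*})$ and applies the sampling lemma to each piece, whereas you apply the sampling bound once and then split the resulting population average --- so this is essentially the same proof.
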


The third lemma connects the iterates $\td{x}_{j}$ and $\td{x}_{j-1}$ in adjacent epochs. The proof exploits the elegant property of geometrization. 
\begin{lemma}\label{lem:dual}
Let $u\in \R^{d}$ be any variable that is independent of $\Ij$ and subsequent random subsets within the $j$th epoch, $\sI_{0}, \sI_{1}, \ldots$, with $\E \|u - x^{*}\|_{2}^{2} < \infty$. Then under assumption \textbf{A}1,
\begin{align*}
\lefteqn{2\etaj\E\la \nabla f(\td{x}_{j}), \td{x}_{j} - u \ra}\\
 &\le \frac{\bj}{\mj}\lb\E\|\td{x}_{j-1} - u\|_{2}^{2} - \E\|\td{x}_{j} - u\|_{2}^{2}\rb + 2\etaj\E \la\ej, \td{x}_{j - 1} - \td{x}_{j}\ra + \E\Wj,
\end{align*}  
where 
\[\Wj = \frac{2\etaj^{2}L}{\bj} (f(\td{x}_{j-1}) - f(\td{x}_{j})) + \frac{2\etaj^{2}L}{\bj} \la\nabla f(\td{x}_{j}), \td{x}_{j} - \td{x}_{j-1}\ra + 2\etaj^{2}\|\nabla f(\td{x}_{j})\|_{2}^{2} + 2\etaj^{2}\|\ej\|_{2}^{2}.\]
\end{lemma}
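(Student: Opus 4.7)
The plan is to track the squared Euclidean distance to the auxiliary point $u$ across one inner-loop step, then apply Lemma~\ref{lem:geom} to telescope across the randomly terminated inner loop, and finally collect the leftover bias and variance terms into $\E\Wj$.

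First I would expand the update rule $\xj_{k} = \xj_{k-1} - \etaj \nuj_{k-1}$ to obtain
\[
\|\xj_{k} - u\|_{2}^{2} = \|\xj_{k-1} - u\|_{2}^{2} - 2\etaj \la \nuj_{k-1}, \xj_{k-1} - u\ra + \etaj^{2}\|\nuj_{k-1}\|_{2}^{2},
\]
and then take expectation over $\sI_{k-1}$, using $\E_{\sI_{k-1}}\nuj_{k-1} = \nabla f(\xj_{k-1}) + \ej$ from \eqref{eq:def_ej}. Substituting the random index $k-1 = \Nj$ (so $\xj_{\Nj} = \td{x}_{j}$) and taking full expectation isolates $2\etaj \E \la \nabla f(\td{x}_{j}), \td{x}_{j} - u\ra$ as
\[
2\etaj \E \la \nabla f(\td{x}_{j}), \td{x}_{j} - u\ra = \E\|\td{x}_{j} - u\|_{2}^{2} - \E\|\xj_{\Nj+1} - u\|_{2}^{2} - 2\etaj \E \la \ej, \td{x}_{j} - u\ra + \etaj^{2}\E\|\nuj_{\Nj}\|_{2}^{2}.
\]

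The pivotal step is to telescope the first two terms on the right-hand side using Lemma~\ref{lem:geom}. Conditioning on the $\sigma$-algebra generated by $\Ij$, $(\sI_{\ell})_{\ell\ge 0}$, and $u$, the sequence $D_{k} = \|\xj_{k} - u\|_{2}^{2}$ is deterministic in $k$, and $\Nj \sim \mathrm{Geom}(\mj/(\mj+\bj))$ is independent of this $\sigma$-algebra. Applying Lemma~\ref{lem:geom} and integrating out yields
\[
\E\|\td{x}_{j} - u\|_{2}^{2} - \E\|\xj_{\Nj+1} - u\|_{2}^{2} = \frac{\bj}{\mj}\bigl(\E\|\td{x}_{j-1} - u\|_{2}^{2} - \E\|\td{x}_{j} - u\|_{2}^{2}\bigr).
\]
The main technical subtlety lies in checking the integrability hypothesis $\E|D_{\Nj}| < \infty$ required by Lemma~\ref{lem:geom}: this is where the assumption $\E\|u - x^{*}\|_{2}^{2}<\infty$ is used, together with the smoothness hypothesis \textbf{A}1 and the fact that $\Nj$ has finite moments of every order to control the growth of $\|\xj_{k} - x^{*}\|_{2}^{2}$ in $k$.

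Finally I would handle the two remaining terms. For the bias term, I would decompose $\la \ej, \td{x}_{j} - u\ra = \la \ej, \td{x}_{j} - \td{x}_{j-1}\ra + \la \ej, \td{x}_{j-1} - u\ra$ and observe that the second summand vanishes in expectation: $\ej$ has zero mean conditional on $\td{x}_{j-1}$ and $u$ since $\Ij$ is independent of both. For the variance term, I would invoke Lemma~\ref{lem:nuknorm} at $k = \Nj$, take full expectation, multiply by $\etaj^{2}$, and substitute $\xj_{0} = \td{x}_{j-1}$ and $\xj_{\Nj} = \td{x}_{j}$; the resulting bound is precisely $\E\Wj$. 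Combining these three ingredients yields the stated inequality.
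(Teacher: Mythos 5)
Your proof is correct and follows essentially the same route as the paper. You expand the one-step recursion for $\|\xj_{k}-u\|_{2}^{2}$, handle the bias by splitting $\la\ej,\xj_{k}-u\ra$ into $\la\ej,\xj_{k}-\xj_{0}\ra$ plus a term that vanishes in expectation because $\Ij$ is independent of $\td{x}_{j-1}$ and $u$, telescope with Lemma~\ref{lem:geom} via conditioning on everything except $\Nj$, and finish with Lemma~\ref{lem:nuknorm}; the paper does the same steps in a slightly different order (it removes the bias before substituting $k=\Nj$, whereas you substitute first and decompose after) and relies on Lemma~\ref{lem:geom_finite_L2} to certify $\E|D_{\Nj}|<\infty$, which is the lemma you allude to when you point to $\E\|u-x^{*}\|_{2}^{2}<\infty$, smoothness, and the finite moments of $\Nj$.
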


The term $\E \la \ej, \td{x}_{j-1} - \td{x}_{j}\ra$ is non-standard. We derive an upper bound in the following lemma. Surprisingly, this lemma is a direct consequence of Lemma \ref{lem:dual}.
\begin{lemma}\label{lem:xjxj-1L2}
Fix any $\gamma_{j} > 0$. Under assumption \textbf{A}1,
\begin{align*}
\lefteqn{2\etaj\E \la\ej, \td{x}_{j-1} - \td{x}_{j}\ra}\\
& \le -2\gamma_{j}\etaj \E \la\nabla f(\td{x}_{j}), \td{x}_{j} - \td{x}_{j-1}\ra + \gamma_{j}\E\Wj + \frac{\etaj^{2}\mj}{\bj}\frac{(1+\gamma_{j})^{2}}{\gamma_{j}}\E\|\ej\|_{2}^{2}.
\end{align*}
\end{lemma}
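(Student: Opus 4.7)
The plan is to invoke Lemma \ref{lem:dual} with the specific choice $u = \td{x}_{j-1}$, which is admissible because $\td{x}_{j-1}$ is determined by the randomness in epochs $1,\ldots,j-1$ and is therefore independent of $\Ij$ and of the inner random subsets $\sI_{0}, \sI_{1}, \ldots$ of the $j$th epoch (the integrability $\E\|\td{x}_{j-1} - x^{*}\|_{2}^{2} < \infty$ is inherited from the prior epochs' iterate bounds). Specializing and rearranging Lemma \ref{lem:dual} at $u = \td{x}_{j-1}$ yields the key intermediate bound
\[\frac{\bj}{\mj}\E\|\td{x}_{j} - \td{x}_{j-1}\|_{2}^{2} \le -2\etaj\E\la\nabla f(\td{x}_{j}), \td{x}_{j} - \td{x}_{j-1}\ra + 2\etaj\E\la\ej, \td{x}_{j-1} - \td{x}_{j}\ra + \E\Wj.\]
Note that the very cross-term $2\etaj\E\la\ej, \td{x}_{j-1} - \td{x}_{j}\ra$ we are trying to bound already appears on the right-hand side, which is what makes this a self-referencing argument.

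Next, I would apply Young's inequality $2\etaj\la a, b\ra \le \etaj^{2}\|a\|_{2}^{2}/t + t\|b\|_{2}^{2}$ (valid for any $t > 0$) with $a = \ej$ and $b = \td{x}_{j-1} - \td{x}_{j}$, giving
\[2\etaj\E\la\ej, \td{x}_{j-1} - \td{x}_{j}\ra \le \frac{\etaj^{2}}{t}\E\|\ej\|_{2}^{2} + t\E\|\td{x}_{j-1} - \td{x}_{j}\|_{2}^{2}.\]
Substituting the upper bound on $\E\|\td{x}_{j} - \td{x}_{j-1}\|_{2}^{2}$ from the previous step produces an inequality with $B := 2\etaj\E\la\ej, \td{x}_{j-1} - \td{x}_{j}\ra$ on both sides. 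Collecting $B$ on the left yields $(1 - t\mj/\bj)B$ bounded by an expression linear in $\E\|\ej\|_{2}^{2}$, $\E\la\nabla f(\td{x}_{j}), \td{x}_{j} - \td{x}_{j-1}\ra$, and $\E\Wj$; provided $t < \bj/\mj$, the factor $1 - t\mj/\bj$ is positive and can be divided out.

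The final step is parameter matching: set $t = \bj\gamma_{j}/[\mj(1+\gamma_{j})]$ so that $t\mj/\bj = \gamma_{j}/(1+\gamma_{j})$ and $\frac{t\mj/\bj}{1-t\mj/\bj} = \gamma_{j}$; a short calculation then shows the coefficient of $\E\|\ej\|_{2}^{2}$ simplifies to $\etaj^{2}\mj(1+\gamma_{j})^{2}/(\bj\gamma_{j})$, matching the claimed inequality exactly. The main (mild) obstacle, as the remark preceding the lemma hints, is spotting that Lemma \ref{lem:dual} can be used \emph{recursively} to bound the very cross-term that its own right-hand side produces; once this recursive structure is recognized, the rest is a one-parameter Young's inequality whose tuning is completely forced by the target $(1+\gamma_{j})^{2}/\gamma_{j}$ coefficient.
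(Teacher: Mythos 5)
Your proposal is correct and takes essentially the same route as the paper: both specialize Lemma~\ref{lem:dual} at $u = \td{x}_{j-1}$ to obtain the self-referencing inequality, then close it with Young's inequality using the same effective parameter (your $t = \bj\gamma_j/[\mj(1+\gamma_j)]$ is exactly the paper's $\etaj\beta$). The only difference is the order of substitution — you solve an implicit inequality for the cross-term, while the paper first derives a lower bound on $\frac{\bj}{\mj}\|\td{x}_j - \td{x}_{j-1}\|_2^2$ and plugs it back — which is a purely cosmetic rearrangement of the same algebra.
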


\subsubsection{Proof of Theorem \ref{thm:one_epoch_simplify_L2}}
We start from a more general version of Theorem \ref{thm:one_epoch_simplify_L2}. 
\begin{theorem}\label{thm:one_epoch_L2}
Fix any $\Gamma_{j} \in (0, 1)$. Assume that
\begin{equation}\label{eq:etaj_cond}
  \etaj L \le \min\left\{\frac{1 - \Gamma_{j}}{2}, \Gamma_{j}\bj\right\}.
\end{equation}
Then under assumption \textbf{A}1,
\begin{align}
  \lefteqn{\E (f(\td{x}_{j}) - f(x^{*})) + \frac{\bj (1 - \Gamma_{j})}{2\etaj \mj}\E \|\td{x}_{j} - x^{*}\|_{2}^{2}} \nonumber\\
& \le \lb \Gamma_{j} + 2\lb\frac{1}{\Gamma_{j}} + \frac{2\bj}{\mj}\rb\frac{\etaj L \mj}{\bj\Bj}\rb \E (f(\td{x}_{j-1}) - f(x^{*})) + \frac{\bj (1 - \Gamma_{j})}{2\etaj \mj}\E \|\td{x}_{j-1} - x^{*}\|_{2}^{2}\nonumber\\
& + \lb \frac{1}{\Gamma_{j}} + \frac{2\bj}{\mj}\rb\frac{\etaj \mj}{\bj \Bj} \H I(\Bj < n).\nonumber
\end{align}
\end{theorem}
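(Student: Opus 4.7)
The plan is to chain Lemmas \ref{lem:dual}, \ref{lem:xjxj-1L2}, and \ref{lem:ejL2} together with the standard smoothness-plus-convexity inequality, tuning the free parameter $\gamma_j$ introduced by Lemma \ref{lem:xjxj-1L2} so that the resulting coefficients match the statement.

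First I apply Lemma \ref{lem:dual} with $u = x^*$, then substitute the bound from Lemma \ref{lem:xjxj-1L2} to eliminate the bias inner product $2\etaj \E\la \ej, \td{x}_{j-1} - \td{x}_j\ra$. After expanding $(1+\gamma_j)\E\Wj$, all terms involving $\la \nabla f(\td{x}_j), \td{x}_j - \td{x}_{j-1}\ra$ collect into a single coefficient of the form $\tfrac{2\etaj^2 L(1+\gamma_j)}{\bj} - 2\gamma_j \etaj$. The critical choice is $\gamma_j = \Gamma_j/(1-\Gamma_j)$, because then the hypothesis $\etaj L \le \Gamma_j \bj$ is \emph{exactly} equivalent to this coefficient being nonpositive. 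Once its sign is favorable, I apply the convexity inequality $\la \nabla f(\td{x}_j), \td{x}_j - \td{x}_{j-1}\ra \ge f(\td{x}_j) - f(\td{x}_{j-1})$, and the result combines with the $(f(\td{x}_{j-1}) - f(\td{x}_j))$ piece already sitting inside $\Wj$ to yield a clean contribution proportional to $\gamma_j \etaj\E[f(\td{x}_{j-1}) - f(\td{x}_j)]$.

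On the left-hand side I use the smoothness-plus-convexity inequality $\la \nabla f(\td{x}_j), \td{x}_j - x^*\ra \ge f(\td{x}_j) - f(x^*) + \tfrac{1}{2L}\|\nabla f(\td{x}_j)\|_2^2$. The residual coefficient in front of $\|\nabla f(\td{x}_j)\|_2^2$ becomes $\tfrac{\etaj}{L}(1 - 2\etaj L(1+\gamma_j))$; the second hypothesis $\etaj L \le (1-\Gamma_j)/2$ makes this nonnegative, so I can simply drop the gradient-norm term. Dividing through by $2\etaj(1+\gamma_j) = 2\etaj/(1-\Gamma_j)$ then produces the coefficient $\tfrac{\bj(1-\Gamma_j)}{2\etaj \mj}$ in front of both $\E\|\td{x}_j - x^*\|_2^2$ and $\E\|\td{x}_{j-1} - x^*\|_2^2$, and the coefficient $\Gamma_j = \gamma_j/(1+\gamma_j)$ in front of $\E[f(\td{x}_{j-1}) - f(x^*)]$.

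The final step is to invoke Lemma \ref{lem:ejL2} on the leftover $\E\|\ej\|_2^2$ term; the prefactor simplifies via $\gamma_j = \Gamma_j/(1-\Gamma_j)$ to $\etaj(1 + \tfrac{\mj}{2\bj \Gamma_j})$, and multiplying against $\tfrac{4L}{\Bj}\E[f(\td{x}_{j-1})-f(x^*)] + \tfrac{2}{\Bj}\H I(\Bj<n)$ yields precisely the stated coefficients. The main obstacle is bookkeeping rather than insight: the single genuine idea is the choice $\gamma_j = \Gamma_j/(1-\Gamma_j)$, after which one must verify that the two hypotheses in \eqref{eq:etaj_cond} are exactly the ones needed to (i) make the $\la \nabla f(\td{x}_j), \td{x}_j - \td{x}_{j-1}\ra$ coefficient nonpositive and (ii) render the gradient-norm term nonnegative so it can be dropped, without requiring any strengthening.
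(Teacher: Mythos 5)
Your proposal is correct and follows essentially the same route as the paper's proof: chain Lemma \ref{lem:dual} (with $u=x^*$) and Lemma \ref{lem:xjxj-1L2} with $\gamma_j = \Gamma_j/(1-\Gamma_j)$, handle the $\la\nabla f(\td{x}_j), \td{x}_j - \td{x}_{j-1}\ra$ terms via convexity under $\etaj L \le \Gamma_j \bj$, absorb the $\|\nabla f(\td{x}_j)\|_2^2$ term via co-coercivity under $\etaj L \le (1-\Gamma_j)/2$, divide by $2\etaj(1+\gamma_j)$, and finish with Lemma \ref{lem:ejL2}. The only cosmetic difference is that you apply co-coercivity once as a lower bound on $\la\nabla f(\td{x}_j), \td{x}_j - x^*\ra$ and then discard the nonnegative gradient-norm residue, whereas the paper first uses it as an upper bound on $\|\nabla f(\td{x}_j)\|_2^2$ and then separately invokes plain convexity on the remaining inner product---algebraically identical.
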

\begin{proof}
Letting $u = x^{*}$ in Lemma \ref{lem:dual} and applying Lemma \ref{lem:xjxj-1L2} with $\gamma_{j} = \Gamma_{j} / (1 - \Gamma_{j})$ (i.e. $\Gamma_{j} = \gamma_{j} / (1 + \gamma_{j})$), we obtain:
\begin{align}
\hspace{.05cm}\lefteqn{2\etaj\E\la \nabla f(\td{x}_{j}), \td{x}_{j} - x^{*} \ra  \le  \frac{\bj}{\mj}\lb\E\|\td{x}_{j-1} - x^{*}\|_{2}^{2} - \E\|\td{x}_{j} - x^{*}\|_{2}^{2} \rb}\nonumber \\
& \hspace{.5cm} -2\gamma_{j}\etaj \E \la\nabla f(\td{x}_{j}), \td{x}_{j} - \td{x}_{j-1}\ra + (1 + \gamma_{j})\E\Wj + \frac{\etaj^{2}\mj}{\bj}\frac{(1 + \gamma_{j})^{2}}{\gamma_{j}}\E\|\ej\|_{2}^{2}\nonumber\\
& \le \frac{\bj}{\mj}\lb\E\|\td{x}_{j-1} - x^{*}\|_{2}^{2} - \E\|\td{x}_{j} - x^{*}\|_{2}^{2}\rb + 2\etaj^{2}(1 + \gamma_{j})\E \|\nabla f(\td{x}_{j})\|_{2}^{2} \nonumber\\
& \hspace{.5cm}+ \frac{2(1 + \gamma_{j})\etaj^{2}L}{\bj}\E (f(\td{x}_{j-1}) - f(\td{x}_{j}))\nonumber\\
& \hspace{.5cm} - 2\etaj \lb\gamma_{j} - \frac{(1 + \gamma_{j})\etaj L}{\bj}\rb \E \la\nabla f(\td{x}_{j}), \td{x}_{j} - \td{x}_{j-1}\ra \nonumber\\
&  \hspace{.5cm} + \frac{(1 + \gamma_{j})\etaj^{2}\mj}{\bj}\lb \frac{1 + \gamma_{j}}{\gamma_{j}} + \frac{2\bj}{\mj}\rb\E \|\ej\|_{2}^{2}.\label{eq:one_epoch_L21}
\end{align}
First, by Lemma \ref{lem:cocoercive} with $x = x^{*}, y = \td{x}_{j}$ and the fact that $\nabla f(x^{*}) = 0$,
\begin{equation}
  \label{eq:one_epoch_L22}
  \|\nabla f(\td{x}_{j})\|_{2}^{2}\le 2L(f(x^{*}) - f(\td{x}_{j}) + \la\nabla f(\td{x}_{j}), \td{x}_{j} - x^{*}\ra).
\end{equation}
Second, since $\gamma_{j}\ge \frac{(1 + \gamma_{j})\etaj L}{\bj}$ by \eqref{eq:etaj_cond}, by convexity of $f$ we obtain that
\begin{align}
  \lefteqn{\frac{2(1 + \gamma_{j})\etaj^{2}L}{\bj}\E (f(\td{x}_{j-1}) - f(\td{x}_{j})) - 2\etaj \lb\gamma_{j} - \frac{(1 + \gamma_{j})\etaj L}{\bj}\rb \E \la\nabla f(\td{x}_{j}), \td{x}_{j} - \td{x}_{j-1}\ra} \label{eq:one_epoch_L23}\\
& \le \frac{2(1 + \gamma_{j})\etaj^{2}L}{\bj}\E (f(\td{x}_{j-1}) - f(\td{x}_{j})) - 2\etaj \lb\gamma_{j} - \frac{(1 + \gamma_{j})\etaj L}{\bj}\rb \E (f(\td{x}_{j}) - f(\td{x}_{j-1}))\nonumber\\
& = 2\etaj \gamma_{j} \E (f(\td{x}_{j-1}) - f(\td{x}_{j})).\nonumber
\end{align}
Combining \eqref{eq:one_epoch_L21}-\eqref{eq:one_epoch_L23} yields
\begin{align}
  \lefteqn{2\etaj (1 - 2(1 + \gamma_{j})\etaj L)\E \la\nabla f(\td{x}_{j}), \td{x}_{j} - x^{*}\ra + 4(1 + \gamma_{j})\etaj^{2}L \E (f(\td{x}_{j}) - f(x^{*}))} \label{eq:one_epoch_L24} \\
& \le \frac{\bj}{\mj}\lb\E\|\td{x}_{j-1} - x^{*}\|_{2}^{2} - \E\|\td{x}_{j} - x^{*}\|_{2}^{2}\rb + 2\etaj \gamma_{j}\E (f(\td{x}_{j-1}) - f(\td{x}_{j}))\nonumber\\
& + \frac{(1 + \gamma_{j})\etaj^{2}\mj}{\bj}\lb \frac{1 + \gamma_{j}}{\gamma_{j}} + \frac{2\bj}{\mj}\rb\E \|\ej\|_{2}^{2}.\nonumber
\end{align}
Since $1 \ge 2(1 + \gamma_{j})\etaj L$ by \eqref{eq:etaj_cond}, by convexity of $f$,
\[\la\nabla f(\td{x}_{j}), \td{x}_{j} - x^{*}\ra\ge f(\td{x}_{j}) - f(x^{*}).\]
By \eqref{eq:one_epoch_L24} and Lemma \ref{lem:ejL2}, 
\begin{align}
\lefteqn{ 2\etaj \E (f(\td{x}_{j}) - f(x^{*}))} \nonumber\\
& \le \frac{\bj}{\mj}\lb\E\|\td{x}_{j-1} - x^{*}\|_{2}^{2} - \E\|\td{x}_{j} - x^{*}\|_{2}^{2}\rb + 2\etaj \gamma_{j}\E (f(\td{x}_{j-1}) - f(\td{x}_{j}))\nonumber\\
&  + 2(1+\gamma_{j})\lb \frac{1 + \gamma_{j}}{\gamma_{j}} + \frac{2\bj}{\mj}\rb\frac{\etaj^{2}\mj}{\bj \Bj}\lb 2L \E (f(\td{x}_{j-1}) - f(x^{*})) + \H I(\Bj < n)\rb.\nonumber
\end{align}
Rearranging the terms, we have
\begin{align}
  \lefteqn{2\etaj(1 + \gamma_{j})\E (f(\td{x}_{j}) - f(x^{*})) + \frac{\bj}{\mj}\E \|\td{x}_{j} - x^{*}\|_{2}^{2}}\nonumber\\
&\le  2\etaj \lb \gamma_{j} + 2(1+\gamma_{j})\lb \frac{1 + \gamma_{j}}{\gamma_{j}} + \frac{2\bj}{\mj}\rb\frac{\etaj L \mj}{\bj\Bj}\rb \E (f(\td{x}_{j-1}) - f(x^{*})) \nonumber\\
& \hspace{.5cm} + \frac{\bj}{\mj}\E \|\td{x}_{j-1} - x^{*}\|_{2}^{2} + 2(1+\gamma_{j})\lb \frac{1 + \gamma_{j}}{\gamma_{j}} + \frac{2\bj}{\mj}\rb\frac{\etaj^{2}\mj}{\bj \Bj} \H I(\Bj < n).\nonumber
\end{align}
Dividing both sides by $2\etaj (1 + \gamma_{j})$ and recalling the definition that $\Gamma_{j} = \gamma_{j} / (1 + \gamma_{j})$, we complete the proof.
\end{proof}

\begin{proof}[\textbf{Proof of Theorem \ref{thm:one_epoch_simplify_L2}}]
  Let $\Gamma_{j}\equiv \Gamma$ in Theorem \ref{thm:one_epoch_L2}.  Under condition \eqref{eq:etaj_mj_bj_Bj_cond}, \eqref{eq:etaj_cond} is satisfied because $\Gamma \le 1 / 4$. Moreover,
  \begin{align*}
    \Gamma + \lb\frac{1}{\Gamma} + \frac{2\bj}{\mj}\rb\frac{2\etaj L \mj}{\bj\Bj}\le \Gamma + \lb\frac{1}{\Gamma} + 2\rb\Gamma^{2}\le 3\Gamma,
  \end{align*}
and 
\[\frac{1}{\Gamma} + \frac{2\bj}{\mj}\le \frac{1}{\Gamma} + 2\le \frac{3}{2\Gamma}.\]
By assumption \textbf{A}2,
\[\frac{1}{4}(f(\td{x}_{j}) - f(x^{*}))\ge \frac{\mu}{8}\|\td{x}_{j} - x^{*}\|_{2}^{2}.\]
By Theorem \ref{thm:one_epoch_L2} we have
\begin{align}
  \lefteqn{\frac{3}{4}\E (f(\td{x}_{j}) - f(x^{*})) + \lb\frac{\bj (1 - \Gamma)}{2\etaj \mj} + \frac{\mu}{8}\rb\E \|\td{x}_{j} - x^{*}\|_{2}^{2}} \nonumber\\
& \le 3\Gamma \E (f(\td{x}_{j-1}) - f(x^{*})) + \frac{\bj (1 - \Gamma)}{2\etaj \mj}\E \|\td{x}_{j-1} - x^{*}\|_{2}^{2} + \frac{3}{2\Gamma}\frac{\etaj \mj}{\bj \Bj} \H I(\Bj < n)\nonumber.
\end{align}
The proof is completed by multiplying by $4 / 3$ in both sides and that $\DH = \H / L$.
\end{proof}

\subsubsection{Other Proofs}

\begin{proof}[\textbf{Proof of Theorem \ref{thm:mainL2}}]
  By Theorem \ref{thm:one_epoch_simplify_L2} with $\Gamma = 1 / 4\alpha^{1/\xi}$, 
  \begin{align}
      \lefteqn{\E (f(\td{x}_{j}) - f(x^{*})) + \lb\frac{2 b (1 - \Gamma)}{3\eta \mj} + \frac{\mu}{6}\rb\E \|\td{x}_{j} - x^{*}\|_{2}^{2}} \nonumber\\
& \le \frac{1}{\alpha^{1/\xi}} \E (f(\td{x}_{j-1}) - f(x^{*})) + \frac{2b (1 - \Gamma)}{3\eta \mj}\E \|\td{x}_{j-1} - x^{*}\|_{2}^{2} + \frac{2\eta L}{\Gamma b}\frac{\mj}{\Bj} \DH I(\Bj < n)\nonumber\\
& = \frac{1}{\alpha^{1/\xi}} \E (f(\td{x}_{j-1}) - f(x^{*})) + \frac{2b(1 - \Gamma)}{3\eta \mj}\E \|\td{x}_{j-1} - x^{*}\|_{2}^{2}n + \frac{2\eta L m_{0}}{\Gamma b B_{0}}\frac{1}{\alpha^{j}} \DH I(j < T_{n}^{*}).\nonumber
  \end{align}
Let 
\[\Lj = \E (f(\td{x}_{j}) - f(x^{*})) + \lb\frac{2 b (1 - \Gamma)}{3\eta \mj} + \frac{\mu}{6}\rb\E \|\td{x}_{j} - x^{*}\|_{2}^{2}.\]
Then 
\begin{align}
  \Lj &\le \max\left\{\frac{1}{\alpha^{1/\xi}}, \frac{\frac{2 b (1 - \Gamma)}{3\eta \mj}}{\frac{2 b (1 - \Gamma)}{3\eta \mjj} + \frac{\mu}{6}}\right\}\Ljj + \frac{2\eta L m_{0}}{\Gamma b B_{0}}\frac{1}{\alpha^{j}} \DH I(j < T_{n}^{*})\nonumber\\
& = \max\left\{\frac{1}{\alpha^{1/\xi}}, \frac{1}{\frac{\mj}{\mjj} + \frac{\eta\mu \mj}{4b (1 - \Gamma)}}\right\}\Ljj + \frac{2\eta L m_{0}}{\Gamma b B_{0}}\frac{1}{\alpha^{j}} \DH I(j < T_{n}^{*})\nonumber\\
& = \max\left\{\frac{1}{\alpha^{1/\xi}}, \frac{1}{\alpha + \eta \mu \alpha^{j}\frac{m_{0}}{4b (1 - \Gamma)}}\right\}\Ljj + \frac{2\eta L m_{0}}{\Gamma b B_{0}}\frac{1}{\alpha^{j}} \DH I(j < T_{n}^{*})\nonumber\\
& \le \max\left\{\frac{1}{\alpha^{1/\xi}}, \frac{1}{\alpha + \eta \mu \alpha^{j + 1/\xi}}\right\}\Ljj + \frac{2\eta L m_{0}}{\Gamma b B_{0}}\frac{1}{\alpha^{j}} \DH I(j < T_{n}^{*}), \label{eq:mainL21}
\end{align}
where the last line uses the condition that
\[\frac{m_{0}}{4b (1 - \Gamma)}\ge \frac{m_{0}}{4b}\ge \frac{1}{4\Gamma} = \alpha^{1/\xi}.\]
For any $j\ge 0$,
\[\max\left\{\frac{1}{\alpha^{1/\xi}}, \frac{1}{\alpha + \eta\mu \alpha^{j + 1/\xi}}\right\}\le \max\left\{\frac{1}{\alpha^{1/\xi}}, \frac{1}{\alpha}\right\}\le \frac{1}{\alpha}.\]
When $j\ge T_{\kappa}^{*}$, we have $\alpha^{j}\ge \kappa / \eta L = 1 / \eta \mu$, and thus
\[\max\left\{\frac{1}{\alpha^{1/\xi}}, \frac{1}{\alpha + \eta\mu \alpha^{j + 1/\xi}}\right\}\le \frac{1}{\alpha^{1/\xi}}.\]
In summary, 
\begin{equation*}
  \max\left\{\frac{1}{\alpha^{1/\xi}}, \frac{1}{\alpha + \eta\mu \alpha^{j + 1/\xi}}\right\}\le\lambda_{j}^{-1}.
\end{equation*}
Plugging this into \eqref{eq:mainL21}, we conclude that
\begin{equation}
  \label{eq:mainL23}
  \Lj\le \lambda_{j}^{-1}\Ljj + \frac{2\eta L m_{0}}{\Gamma b B_{0}}\frac{1}{\alpha^{j}} \DH I(j < T_{n}^{*}).
\end{equation}

Finally we prove the following statement by induction.
\begin{equation}\label{eq:mainL24}
\L_{T} \le \Lambda_{T}^{-1} \L_{0} + C_{0}\td{\Lambda}_{T}^{-1}\DH (T\wedge T_{n}^{*}),\quad \mbox{where }C_{0} = \frac{2\eta L m_{0}}{\Gamma b B_{0}}.
\end{equation}
It is obvious that \eqref{eq:mainL24} holds for $T = 0$. Suppose it holds for $T - 1$, then by \eqref{eq:mainL23},
\begin{align*}
  \lefteqn{\L_{T}\le \lambda_{T}^{-1} \L_{T-1} + C_{0}\frac{\DH}{\alpha^{T}}I(T < T_{n}^{*})} \\
& \le \lambda_{T}^{-1}\lb \Lambda_{T-1}^{-1} \L_{0} + C_{0}\td{\Lambda}_{T-1}^{-1}\DH ((T - 1)\wedge T_{n}^{*})\rb + C_{0}\frac{\DH}{\alpha^{T}}I(T < T_{n}^{*})\\
  & = \Lambda_{T}^{-1}\L_{0} + C_{0}\DH\lb \lambda_{T}^{-1}\td{\Lambda}_{T-1}^{-1} ((T - 1)\wedge T_{n}^{*}) + \alpha^{-T}I(T < T_{n}^{*})\rb\\
 & \le \Lambda_{T}^{-1}\L_{0} + C_{0}\DH\lb \td{\Lambda}_{T}^{-1} ((T - 1)\wedge T_{n}^{*}) + \alpha^{-T}I(T < T_{n}^{*})\rb,
\end{align*}
where the last line uses the fact that $\td{\lambda}_{T}\le \lambda_{T}$ for all $T > 0$. If $T < T_{n}^{*}$, then $\td{\Lambda}_{T} = \alpha^{-T}$ and thus
\[\td{\Lambda}_{T}^{-1} ((T - 1)\wedge T_{n}^{*}) + \alpha^{-T}I(T < T_{n}^{*}) = \td{\Lambda}_{T}^{-1} (T\wedge T_{n}^{*}).\]
If $T > T_{n}^{*}$, 
\[\td{\Lambda}_{T}^{-1} ((T - 1)\wedge T_{n}^{*}) + \alpha^{-T}I(T < T_{n}^{*}) = \td{\Lambda}_{T}^{-1} ((T - 1)\wedge T_{n}^{*})\le \td{\Lambda}_{T}^{-1} (T\wedge T_{n}^{*}).\]
Therefore, \eqref{eq:mainL24} is proved. The proof is then completed by noting that
\[\L_{T}\ge \E (f(\td{x}_{T}) - f(x^{*}))\]
and 
\begin{align*}
  \L_{0} & = \E (f(\td{x}_{0}) - f(x^{*})) + \lb\frac{2b(1 - \Gamma)}{3\eta m_{0}} + \frac{\mu}{6}\rb\E \|\td{x}_{0} - x^{*}\|_{2}^{2}\\
& \stackrel{(i)}{\le} \lb \frac{1}{2} + \frac{2b (1 - \Gamma)}{3\eta L m_{0}} + \frac{\mu}{6L}\rb\Dx \stackrel{(ii)}{\le} \lb \frac{2}{3} + \frac{1}{6\eta L}\rb\Dx\\
& \stackrel{(iii)}{\le} \lb \frac{1}{4\eta L} + \frac{1}{6\eta L}\rb\Dx  \le \frac{\Dx}{2\eta L}.
\end{align*}
where \emph{(i)} uses assumption \textbf{A}1 and the definition of $\Dx$, \emph{(ii)} uses the fact that $\mu \le L$ and the condition $b / m_{0}\le \Gamma \le 1 / 4$, \emph{(iii)} uses the fact that $\Gamma \le 1 / 4$ and thus $\eta L \le (1 - \Gamma) / 2 \le 3 / 8$.
\end{proof}

\begin{proof}[\textbf{Proof of Theorem \ref{thm:complexityL2}}]
Let
\begin{equation}\label{eq:complexityL21}
  T^{(1)}(\eps) = \min\left\{T: \Lambda_{T}\ge \frac{\Dx}{\eps \eta L}\right\}, \quad T^{(2)}(\eps) = \min\left\{T: \td{\Lambda}_{T}\ge \frac{\DH T_{n}^{*}}{\eps}\right\}.
\end{equation}
Then for any $T \ge \max\{T^{(1)}(\eps), T^{(2)}(\eps)\}$,
\[\E (f(\td{x}_{T}) - f(x^{*}))\le \frac{\eps}{2} + \frac{2\eta L m_{0}}{\Gamma b B_{0}}\eps \le \lb\frac{1}{2} + \Gamma\rb \eps \le \eps.\]
This entails
\begin{equation}
  T(\eps)\le \max\{T^{(1)}(\eps), T^{(2)}(\eps)\}.
\end{equation}
By definition, when $T \le T_{\kappa}^{*}$,
\[\Lambda_{T}\ge \alpha^{T},\]
and when $T > T_{\kappa}^{*}$, since $\xi < 1$,
\[\Lambda_{T} = \alpha^{\lfloor T_{\kappa}^{*}\rfloor} \alpha^{(T - \lfloor T_{\kappa}^{*}\rfloor) / \xi}\ge \alpha^{T_{\kappa}^{*}} \alpha^{(T - T_{\kappa}^{*}) / \xi} = \frac{\kappa}{\eta L}\alpha^{(T - T_{\kappa}^{*}) / \xi}.\]
As a result,
\begin{equation}\label{eq:TT1T2}
T_{1}(\eps) \le \min\left\{\frac{\log\lb\frac{\Dx}{\eps\eta L}\rb}{\log \alpha}, T_{\kappa}^{*} + \xi \frac{\log\lb\frac{\Dx}{\eps \kappa}\rb}{\log \alpha}\right\}
\end{equation}
and 
\begin{equation*}
\alpha^{T_{1}(\eps)} \le \min\left\{\frac{\Dx}{\eps\eta L}, \frac{\kappa}{\eta L}\lb\frac{\Dx}{\eps \kappa}\rb^{\xi}_{*}\right\}.
\end{equation*}
Similarly, when $T \le T_{n}^{*}\vee T_{\kappa}^{*}$
\[\td{\Lambda}_{T} = \alpha^{T},\]
and when $T > T_{n}^{*}\vee T_{\kappa}^{*}$,
\[\td{\Lambda}_{T}\ge \alpha^{T_{n}^{*}\vee T_{\kappa}^{*}}\alpha^{(T - T_{n}^{*} \vee T_{\kappa}^{*}) / \xi}\ge \alpha^{T_{n}^{*}+ T_{\kappa}^{*}}\alpha^{(T - T_{n}^{*} - T_{\kappa}^{*}) / \xi}.\]
Thus,
\[\td{\Lambda}_{T}\ge \lb\sqrt{\frac{n}{B_{0}}} + \frac{1}{\eta \mu}\rb\alpha^{(T - T_{n}^{*} - T_{\kappa}^{*}) / \xi} \ge \frac{\td{\kappa}}{\sqrt{B_{0}} + \eta L}\alpha^{(T - T_{n}^{*} - T_{\kappa}^{*}) / \xi}.\]
As a result,
\begin{equation*}
T_{2}(\eps) \le \min\left\{\frac{\log\lb\frac{\DH T_{n}^{*}}{\eps}\rb}{\log \alpha}, T_{n}^{*} + T_{\kappa}^{*} + \xi \frac{\log\lb\frac{\DH T_{n}^{*}(\sqrt{B_{0}} + \eta L)}{\eps\td{\kappa}}\rb}{\log \alpha}\right\},
\end{equation*}
and 
\begin{equation*}
\alpha^{T_{2}(\eps)} \le \min\left\{\frac{\DH T_{n}^{*}}{\eps}, \td{\kappa} \lb\frac{\DH T_{n}^{*}(\sqrt{B_{0}} + \eta L)}{\eps \td{\kappa}}\rb^{\xi}_{*}\right\}.
\end{equation*}
In summary, by \eqref{eq:TT1T2}
\begin{align}
  \lefteqn{\alpha^{T(\eps)}\le \alpha^{T_{1}(\eps)} + \alpha^{T_{2}(\eps)}} \nonumber\\
& \le  \min\left\{\frac{\Dx}{\eps\eta L}, \frac{\kappa}{\eta L}\lb\frac{\Dx}{\eps \kappa}\rb^{\xi}_{*}\right\} + \min\left\{\frac{\DH T_{n}^{*}}{\eps}, \td{\kappa}\lb\frac{\DH T_{n}^{*}(\sqrt{B_{0}} + \eta L)}{\eps \td{\kappa}}\rb^{\xi}_{*}\right\}\nonumber\\
& = O\lb\min\left\{\frac{\Dx + \DH T_{n}^{*}}{\eps}, \kappa\lb\frac{\Dx}{\eps \kappa}\rb^{\xi}_{*} + \frac{\DH T_{n}^{*}}{\eps}, \td{\kappa}\lb\frac{\Dx + \DH T_{n}^{*}}{\eps \td{\kappa}}\rb^{\xi}_{*}\right\}\rb,\nonumber
\end{align}
where the last line uses the monotonicity of the mapping $x\mapsto x^{1 - \xi}$. By definition, $T_{n}^{*} = O(\log n) = \td{O}(1)$. As a result, $\Dx + \DH T_{n}^{*} = \td{O}(\max\{\Dx, \DH\}) = \td{O}(\D)$ and thus
\[\alpha^{T(\eps)} = \td{O}\lb\min\left\{\frac{\D}{\eps}, \kappa\lb\frac{\Dx}{\eps \kappa}\rb^{\xi}_{*} + \frac{\DH}{\eps}, \td{\kappa}\lb\frac{\D}{\eps \td{\kappa}}\rb^{\xi}_{*}\right\}\rb.\]
The proof is then completed by replacing $\alpha^{T(\eps)}$ by $A(\eps)$ and equation \ref{eq:Ecompeps}.
\end{proof}

\section{Mirror-Proximal SCSG for Composite Problems}\label{sec:ana_general}
In this section we extend SCSG to composite problems in non-Euclidean spaces. Throughout this section we deal with problem \eqref{eq:obj_finite_sum} with $\mathcal{X}$ assumed to be a \emph{subset} of a Hilbert space $\X_{0}$, equipped with an inner product $\la\cdot, \cdot\ra$. Let $\|\cdot\|_{2}$ denote the norm induced by the inner product; i.e. $\|x\|_{2} = \sqrt{\la x, x \ra}$. For any convex function $g$, let $g^{*}$ denote the convex conjugate of $g$:
\[g^{*}(x) = \sup_{y\in \X_{0}}\la x, y\ra - g(x).\]
For any differential convex function $w$, let $\BD{w}(\cdot, \cdot)$ denote the Bregman divergence:
\[\BD{w}(x, y) = w(x) - w(y) - \la \nabla w(y), x - y\ra.\]
We denote by $\R^{+}$ the set of nonnegative reals.

We define \emph{Mirror-Proximal SCSG} as a variant of Algorithm \ref{algo:SCSGplus} designed for composite problems (with $\psi \not = 0$). The algorithm is detailed below. The only difference lies in line 9 where the gradient step is replaced by a mirror-proximal step. This is the standard extension to composite problems in general Hilbert spaces (see, e.g., \citep{duchi10, lan15, Katyusha}). Note that when $\psi(x) \equiv 0$ and $w(x) = \|x\|_{2}^{2} / 2$, Algorithm \ref{algo:mirrorproximalSCSGplus} reduces to Algorithm \ref{algo:SCSGplus}. Whenever $w(x) = \|x\|_{2}^{2} / 2$, line 9 reduces to the proximal gradient step.

Unlike most of the literature on composite mirror descent algorithms, our analysis requires a weaker condition on the distance-generating function $w(x)$. To state the condition, we define a class of functions which we refer to as \emph{Convex sup-Homogeneous Envelope Functions (CHEF)}.

\begin{algorithm}[htp]
\caption{Mirror-proximal SCSG for regularized finite-sum optimization}
\label{algo:mirrorproximalSCSGplus}
\textbf{Inputs: } Number of stages $T$, initial iterate $\td{x}_{0}$, stepsizes $(\etaj)_{j=1}^{T}$, block sizes $(\Bj)_{j=1}^{T}$, inner loop sizes $(m_{j})_{j=1}^{T}$, mini-batch sizes $(\bj)_{j=1}^{T}$.

\textbf{Procedure}
\begin{algorithmic}[1]
  \For{$j = 1, 2, \cdots, T$}
  \State Uniformly sample a batch $\I_{j}\subset \{1,\ldots,n\}$ with $|\I_{j}| = \Bj$;
  \State $\mu_{j}\gets \nabla f_{\I_{j}}(\td{x}_{j-1})$;
  \State $x_{0}\gets \td{x}_{j-1}$;
  \State Generate $N_{j}\sim \mathrm{Geom}\lb\frac{\mj}{\mj + \bj}\rb$;
  \For{$k = 1, 2, \cdots, N_{j}$}
  \State Uniformly sample a batch $\sI_{k-1}\subset \{1,\ldots, n\}$ with $|\sI_{k-1}| = \bj$;
  \State $\nuj_{k - 1} \gets \nabla f_{\sI_{k-1}}(\xj_{k-1}) - \nabla f_{\sI_{k-1}}(\xj_{0}) + \mu_{j}$;
  \State $\xj_{k}\gets \argmin_{x\in \mathcal{X}} \lb \la \nuj_{k-1}, y\ra + \psi(y) + \frac{1}{\etaj}\BD{w}(y, \xj_{k-1})\rb$;
  \EndFor
  \State $\td{x}_{j}\gets x_{N_{j}}$;
  \EndFor
\end{algorithmic}
\textbf{Output: } $\td{x}_{T}$.
\end{algorithm}

\begin{definition}
 Given any increasing function $g: \R^{+}\mapsto \R^{+}$ such that
\[\lim_{\lambda\rightarrow 0}g^{-1}(\lambda) = 0,\]
 a function $G: \X_{0}\mapsto \R^{+}$ is a CHEF with parameters $(g(\cdot), C_{G})$ if 
  \begin{enumerate}[\textbf{C}1]
  \item $G$ is non-negative with $G(0) = 0$, convex and symmetric in the sense that $G(w) = G(-w)$ for any $w\in \X_{0}$;
  \item for any $w\in \X_{0}$ and $\lambda > 0$,
\[G(\lambda w)\ge \lambda g(\lambda) G(w);\]
  \item $G^{*}$, the convex conjugate of $G$, satisfies a \textbf{generalized Nemirovsky inequality} in the sense that for any set of independent mean-zero $\X_{0}$-valued random vectors $Z_{1}, \ldots, Z_{m}$, 
\[\E G^{*}\lb\sum_{j=1}^{m}Z_{j}\rb\le C_{G}\sum_{j=1}^{m}\E G^{*}(Z_{j}).\]
  \end{enumerate}

\end{definition}

Note that $G^{*}$ is non-negative, since for any $w \in \X_{0}$, 
\[G^{*}(w) = \sup_{x\in \X_{0}} \la x, w\ra - G(x) \ge \la 0, w\ra - G(0) = 0.\]
Our first condition is imposed on the Bregman divergence induced by $w(x)$.
\begin{enumerate}[\textbf{B}1]
\item There exists a CHEF 
such that for any $x, y\in \X$,
\[\BD{w}(x, y)\ge G(x - y).\]
\end{enumerate}
In the literature (see, e.g., \citep{beck03, duchi10, Katyusha}), it is common to consider a special case where 
\[G(w) = G_{2}(w; \|\cdot\|) = \frac{1}{2}\|w\|^{2},\]
where $\|\cdot\|$ can be any norm, not necessarily $\|\cdot\|_{2}$, on $\X_{0}$. \cite{srebro11} considered a more general class of $G$'s in the form of 
\[G(w) = G_{q}(w; \|\cdot\|) = \frac{1}{q}\|w\|^{q}.\]
It is clear that $G_{q}$ satisfies \textbf{C}1 and \textbf{C}2 for any $q > 1$ with $g(\lambda) = \lambda^{q - 1}$.

To see that $G_{q}(w; \|\cdot\|)$ satisfies \textbf{C}3, we first consider the case where $q = 2$, where $\X_{0} = \R^{d}$ is the Euclidean space and where $\|\cdot\| = \|\cdot\|_{r}$ for some $r\ge 1$, with 
\[\|x\|_{r} = \left\{
    \begin{array}{ll}
      \lb \sum_{i=1}^{n}|x_{i}|^{r}\rb^{1/r} & (1\le r < \infty)\\
      \max_{i=1}^{n}|x_{i}| & (r = \infty)
    \end{array}
\right..\]
Then $\|\cdot\|_{*} = \|\cdot\|_{r'}$ where $r' = r / (r - 1)$. By Lemma \ref{lem:dual_Gq}, 
\[G_{2}^{*}(x; \|\cdot\|_{r}) = \frac{1}{2}\|x\|_{r'}^{2}.\]
By Nemirovsky's inequality (Theorem 2.2 of \cite{dumbgen10}), for any independent mean-zero random vectors $Z_{1}, \ldots, Z_{m}\in \R^{d}$, 
\[\E \left\|\sum_{j=1}^{n} Z_{j}\right\|_{r'}^{2}\le K_{\mathrm{Nem}}(d, r')\sum_{j=1}^{n}\E \|Z_{j}\|_{r'}^{2},\]
where 
\[K_{\mathrm{Nem}}(d, r')\le \min\{r' - 1, 2e\log d\}.\]
Thus, whenever $r' = O(1)$, $K_{\mathrm{Nem}}(d, r') = O(1)$. Even when $r' = \infty$, in which case $r = 1$, $K_{\mathrm{Nem}}(d, r')$ scales as $\log d$. 

Generally, given $G(x) = \|x\|^{q} / q$ for $q \in (1, 2)$ and a norm $\|\cdot\|$ on a general Hilbert space $\X$, Lemma \ref{lem:dual_Gq} implies that $G^{*}(x) = \|x\|_{*}^{p} / p$ where $ p = q / (q - 1)$. Then the property \textbf{C}3 is equivalent to the condition that $\X_{0}$ has \emph{Martingale Type} $p$ (see, e.g., \citep{pisier75}). In particular, when $\X_{0} = \R^{d}$ and $\|\cdot\| = \|\cdot\|_{r}$ with $r \le q$, we prove in Proposition \ref{prop:Hanner} that $G^{*}(x)$ satisfies the property \textbf{C}3, using Hanner's inequality \citep{hanner56}. In summary, the property \textbf{C}3 is satisfied in almost all cases that have been commonly studied in the literature on mirror descent methods.


Besides assumption \textbf{B}1 we need the analogous assumptions of \textbf{A}1 and \textbf{A}2 for the smoothness and strong convexity of the objectives. 
\begin{enumerate}[\textbf{B}1]
\setcounter{enumi}{1}
\item $\displaystyle 0\le \BD{f_{i}}(x, y)\le L G(x - y)$
for all $i$ and $x, y\in \X$;
\item $\displaystyle F(x) - F(x^{*})\ge \mu \BD{w}(x^{*}, x)$
for some $\mu\ge 0$.
\end{enumerate}
It is easy to see that assumptions \textbf{B}2 and \textbf{B}3 reduce to \textbf{A}1 and \textbf{A}2 when $\X = \R^{d}, G(x) = w(x) = \|x\|_{2}^{2} / 2$. Note that \textbf{B}3 only requires strong convexity at $x^{*}$; it does not requires global strong convexity. 

Finally, we modify the definitions of $\Dx, \DH$ as
\[\Dx = L \E \BD{w}(x^{*}, \td{x}_{0}), \quad \DH = \frac{L}{n}\sum_{i=1}^{n}G^{*}\lb \frac{\nabla f_{i}(x^{*}) - \nabla f(x^{*})}{L}\rb,\]
where $x^{*}$ is the optimum of $F$. It is straightforward to show that $\Dx$ and $\DH$ coincides with \eqref{eq:DxDH} up to a constant two when $\X_{0} = \X = \R^{d}$ and $w(x) = \|x\|_{2}^{2} / 2$. We also define an extra quantity $\DF$ as
\begin{equation*}
  \DF = \E [F(\td{x}_{0}) - F(x^{*})].
\end{equation*}
In the unregularized case, assumption \textbf{B}1 and \textbf{B}2 imply that $\DF \le \Dx$. However, this comparison may not hold in the regularized case. Finally we re-define $\D$ as the maximum of $\Dx$, $\DH$ and $\DF$.

\subsection{Main results}
Similar to the unregularized case, we present results on the one-epoch analysis, the multiple-epoch analysis and the complexity analysis. The results are almost the same as those in Section \ref{sec:ana} though the proofs are much more involved. All proofs are relegated to Appendix \ref{app:ana_general}.
\begin{theorem}\label{thm:one_epoch_simplify}
Fix any $\xi > 0$ and $\Gamma = 1 / 4\alpha^{1/\xi}$. Assume that
\[6\etaj L\le \min \left\{1, \bj g\lb \frac{\Gamma}{3C_{G}}\rb\right\}, \quad \mj \ge \max\{\bj, 4C_{G}\} / \Gamma,\]
and
\[ \Bj \ge \frac{5}{8\Gamma}g\lb\frac{\Gamma}{3C_{G}}\rb \frac{\mj}{g(1 / \mj)}.\]
Then under assumptions \textbf{B}1 - \textbf{B}3,
  \begin{align*}
    &\E (F(\td{x}_{j}) - F(x^{*})) + \lb\frac{4\bj (1 - \Gamma)}{3\eta}\frac{1}{\mj} + \frac{\mu}{3}\rb\E \BD{w}(x^{*}, \td{x}_{j})\\
\le & 4\Gamma \E (F(\td{x}_{j-1}) - F(x^{*})) + \frac{4\bj (1 - \Gamma)}{3\eta}\frac{1}{\mj}\E \BD{w}(x^{*}, \td{x}_{j-1}) + \frac{\DH}{6\alpha^{j}} I(\Bj < n).
  \end{align*}
\end{theorem}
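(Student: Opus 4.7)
My plan is to mirror the strategy used for Theorem \ref{thm:one_epoch_simplify_L2}: first derive a general one-epoch recursion for arbitrary $\Gamma_{j}\in (0,1)$ (the analog of Theorem \ref{thm:one_epoch_L2}), then specialize to $\Gamma_{j}\equiv\Gamma = 1/4\alpha^{1/\xi}$ and absorb terms using $\Gamma\le 1/4$, the strong-convexity bound \textbf{B}3, and the stipulated lower bounds on $\mj$ and $\Bj$.

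The first step is to develop mirror-proximal analogs of the four technical lemmas in Section \ref{subsec:technical_proofs}. I would establish: (i) a bound on $\E_{\sI_{k}}G^{*}(\nuj_{k})$ replacing Lemma \ref{lem:nuknorm}, derived from the smoothness \textbf{B}2 together with Fenchel--Young and the sup-homogeneity \textbf{C}2; (ii) a bound on $\E G^{*}(\ej)$ replacing Lemma \ref{lem:ejL2}, obtained by applying the generalized Nemirovsky inequality \textbf{C}3 to the independent mean-zero summands $\frac{1}{\Bj}(\nabla f_{i}(\td{x}_{j-1}) - \nabla f(\td{x}_{j-1}))$ for $i\in\Ij$ (this is where $C_{G}$ enters the hypotheses); (iii) a dual lemma replacing Lemma \ref{lem:dual}, derived from the three-point inequality for the mirror-proximal update,
\[
\la\nuj_{k-1}, y - \xj_{k}\ra + \psi(y) - \psi(\xj_{k}) \ge \frac{1}{\etaj}\lb\BD{w}(y,\xj_{k}) + \BD{w}(\xj_{k},\xj_{k-1}) - \BD{w}(y,\xj_{k-1})\rb,
\]
then invoking Lemma \ref{lem:geom} on $D_{k} = \BD{w}(u,\xj_{k})$ to telescope across the geometric inner loop so that $\td{x}_{j}$ and $\td{x}_{j-1}$ appear directly; and (iv) a bound on $\E\la\ej,\td{x}_{j-1}-\td{x}_{j}\ra$ replacing Lemma \ref{lem:xjxj-1L2}, obtained by re-applying (iii) with $u = \td{x}_{j-1}$ (valid because $\td{x}_{j-1}$ is independent of $\Ij$ and of $\sI_{0},\sI_{1},\ldots$).

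Combining these, I would set $u = x^{*}$ in (iii) and substitute (iv) with $\gamma_{j} = \Gamma_{j}/(1-\Gamma_{j})$ to obtain an inequality whose left-hand side contains $2\etaj(1 - c_{1}\etaj L)\E[F(\td{x}_{j}) - F(x^{*})]$ (after using convexity of $f$ and absorbing $\psi$ into the three-point step) and whose right-hand side telescopes $\BD{w}(x^{*},\td{x}_{j-1}) - \BD{w}(x^{*},\td{x}_{j})$ plus a bias term governed by $\E G^{*}(\ej)$ and a drift term absorbed by $f$-convexity. Substituting (ii) for the bias and invoking \textbf{B}3 to peel a $\mu\BD{w}(x^{*},\td{x}_{j})/3$ term off of $F(\td{x}_{j}) - F(x^{*})$ (analogous to the $\mu/8$ move in the Euclidean proof) yields the desired recursion after the final specialization to $\Gamma = 1/4\alpha^{1/\xi}$.

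The main obstacle will be calibrating the compatibility conditions that replace the clean relation $\etaj L \le \Gamma^{2}\bj\Bj/2\mj$ of Theorem \ref{thm:one_epoch_simplify_L2}. In the CHEF setting the Fenchel--Young step $\la\ej, z\ra \le G^{*}(\ej/\lambda) + G(\lambda z)$ combined with \textbf{C}2 produces a factor $\lambda g(\lambda)$ rather than a clean quadratic factor, so the choice $\lambda = \Gamma/3C_{G}$ together with a second application of \textbf{C}2 to convert $G(\lambda z)$ into a telescoping Bregman-divergence term is what produces the specific form $\Bj \ge \tfrac{5}{8\Gamma}g(\Gamma/3C_{G})\mj/g(1/\mj)$; the appearance of $g(1/\mj)$ in the denominator is precisely the price paid for not having a Euclidean norm. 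A second subtlety is the cocoercivity-type bound used to relate $G^{*}(\nabla f(\xj_{k}))$ back to $F(\xj_{k}) - F(x^{*})$: Lemma \ref{lem:cocoercive} has no ready analog, so I would derive an ersatz cocoercivity by minimizing the $G$-smoothness upper model in \textbf{B}2 over a mirror step and then applying Fenchel--Young. Once these pieces are resolved, the remaining bookkeeping to reach the stated constants ($4\Gamma$, $\mu/3$, $\DH/6$) proceeds essentially as in the Euclidean proof.
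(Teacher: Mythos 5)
Your high-level plan (prove a general-$\Gamma_j$ version via mirror-proximal analogs of the four Euclidean lemmas, telescope via geometrization, then specialize to $\Gamma = 1/4\alpha^{1/\xi}$) matches the paper's structure, and your reading of where $g(1/\mj)$ and the $\Gamma/3C_G$ threshold come from is accurate. However, there is a genuine gap in step (i): you propose to bound $\E_{\sI_k}G^*(\nuj_k)$ as a replacement for Lemma \ref{lem:nuknorm}, and later say you would relate the resulting $G^*(\nabla f(\xj_k))$ piece back to $F(\xj_k)-F(x^*)$ via an ersatz cocoercivity. That route works in the Euclidean proof only because the problem there is unregularized, so $\nabla f(x^*) = 0$ and the cocoercive bound $\|\nabla f(\xj_k)\|^2 = \|\nabla f(\xj_k) - \nabla f(x^*)\|^2 \le 2L\,\BD{f}(x^*,\xj_k)$ applies directly (this is exactly the step in \eqref{eq:one_epoch_L22}). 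Theorem \ref{thm:one_epoch_simplify}, by contrast, is for the composite problem with $\psi \neq 0$, where $x^*$ minimizes $F = f+\psi$, not $f$; hence $\nabla f(x^*) \ne 0$ in general. The generalized cocoercivity (Lemma \ref{lem:general_cocoercive}) controls only $G^*\lb\frac{\nabla f(\xj_k)-\nabla f(x^*)}{L}\rb$, which no longer coincides with $G^*(\nabla f(\xj_k))$, and the leftover $G^*(\nabla f(x^*))$ is a non-vanishing constant that does not telescope and is unrelated to $\DH$. Your "minimize the upper model" idea gives $LG^*(\nabla f(x)/L) \le f(x) - \inf f$, which again references $\inf f$ rather than $f(x^*)$.

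The paper avoids this obstruction by never passing the deterministic gradient through Fenchel--Young at all. Lemma \ref{lem:Mk} decomposes the proximal residual $\Mj_k$ into three pieces: the zero-mean variance piece $\nuj_k - \nabla f(\xj_k) - \ej$ goes through Fenchel--Young, Lemma \ref{lem:dual_BD}, and the CHEF machinery (this is the piece that genuinely needs $G^*$ and \textbf{C}3); the drift piece $\la\nabla f(\xj_k), \xj_k - \xj_{k+1}\ra + \psi(\xj_k) - \psi(\xj_{k+1})$ is bounded via the exact identity $\la\nabla f(\xj_k), \xj_k-\xj_{k+1}\ra = f(\xj_k)-f(\xj_{k+1}) + \BD{f}(\xj_{k+1},\xj_k)$ together with smoothness $\BD{f}(\xj_{k+1},\xj_k) \le L\,\BD{w}(\xj_{k+1},\xj_k)$, which cancels against part of the proximal penalty and yields the telescoping $F(\xj_k)-F(\xj_{k+1})$ with no reference to $\nabla f(x^*)$; and the bias piece $\la\ej, \xj_k-\xj_{k+1}\ra$ is left intact for the geometrization. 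To repair your proposal, replace the single bound on $G^*(\nuj_k)$ by this three-way decomposition (or equivalently, only Fenchel--Young the mean-zero innovation $\nuj_k - \nabla f(\xj_k) - \ej$, not the full $\nuj_k$), and drop the cocoercivity step for the gradient piece in favor of the exact-identity-plus-smoothness argument.
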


Theorem \ref{thm:one_epoch_simplify} and Theorem \ref{thm:one_epoch_simplify_L2} give almost the same result, up to constants, except that Theorem \ref{thm:one_epoch_simplify_L2} has an additional term $\frac{\eta L}{b}$ in the coefficient of  $\DH$. In the cases where $\eta$ is small and $b$ is large, Theorem \ref{thm:one_epoch_simplify_L2} gives a better guarantee. However, in our settings for SCSG, $\eta L$ and $b$ are both taken as $O(1)$ and thus the theorems yield the same results up to the constant. 

To set the parameters for SCSG in this general case, we still take a constant stepsize, a constant mini-batch size and a geometrically increasing sequence for $\mj$. In contrast, $\Bj$ should scale as $\mj / g(1 / \mj)$. This coincides with Theorem \ref{thm:mainL2} since $g(x) = x$ in the unregularized case with the usual strong convexity condition (assumption \textbf{A}2). 

\begin{theorem}\label{thm:main}
 Fix and constant $\alpha > 1$, $m_{0} > 0$ and  $\xi \in (0, 1)$. Let 
 \[\etaj \equiv \eta, \quad \bj \equiv b, \quad \mj = m_{0}\alpha^{j}, \quad \Bj = \left\lceil \frac{5\alpha^{1/\xi}}{2}g\lb \frac{1}{12 C_{G}\alpha^{1/\xi}}\rb\frac{\mj}{g(1 / \mj)}\right\rceil.\]
Assume that
\[m_{0} \ge 4\alpha^{1/\xi}\max\{b, 4C_{G}\}, \quad 6\eta L\le \min \left\{1,  g\lb \frac{1}{12C_{G}\alpha^{1/\xi}}\rb b\right\}.\]
Then under assumptions \textbf{B}1 - \textbf{B}3, 
\[\E (F(\td{x}_{j}) - F(x^{*}))\le \Lambda_{T}^{-1}\lb\DF + \frac{\Dx}{3\eta L}\rb + \td{\Lambda}_{T}^{-1}\frac{\DH}{6}(T\wedge T_{n}^{*}),\]
where $\Lambda_{T}$ and $\td{\Lambda}_{T}$ are defined as in Theorem \ref{thm:mainL2}.
\end{theorem}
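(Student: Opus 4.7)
The plan is to mirror the proof of Theorem \ref{thm:mainL2}, using Theorem \ref{thm:one_epoch_simplify} in place of Theorem \ref{thm:one_epoch_simplify_L2} and the Bregman divergence $\BD{w}(x^{*},\cdot)$ in place of the squared Euclidean distance. The high-level structure is identical: (i) set up a Lyapunov sequence whose coefficient depends on $\mj$, (ii) derive a one-step contraction $\Lj \le \lambda_{j}^{-1}\Ljj + \text{noise}$, (iii) identify two regimes ($j\le T_{\kappa}^{*}$ versus $j > T_{\kappa}^{*}$) where the contraction factor is either $\alpha$ or $\alpha^{1/\xi}$, (iv) iterate and bound $\L_{0}$. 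Because the geometric doubling schedule $\Bj = \lceil \tfrac{5\alpha^{1/\xi}}{2} g(1/(12 C_{G}\alpha^{1/\xi}))\mj / g(1/\mj)\rceil$ is chosen precisely so that the condition on $\Bj$ in Theorem \ref{thm:one_epoch_simplify} holds for every $j$, the one-epoch result applies out of the box.

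The first step is to define
\[
\Lj \;=\; \E(F(\td{x}_{j}) - F(x^{*})) + \left(\frac{4\bj(1-\Gamma)}{3\eta \mj} + \frac{\mu}{3}\right)\E \BD{w}(x^{*},\td{x}_{j}),
\]
with $\Gamma = 1/4\alpha^{1/\xi}$. Applying Theorem \ref{thm:one_epoch_simplify} (whose hypotheses follow directly from the parameter specification and the condition $m_{0} \ge 4\alpha^{1/\xi}\max\{b, 4C_{G}\}$) and noting that $4\Gamma = \alpha^{-1/\xi}$, the one-epoch inequality rearranges to
\[
\Lj \;\le\; \max\!\left\{\frac{1}{\alpha^{1/\xi}},\;\frac{4\bj(1-\Gamma)/(3\eta \mjj)}{4\bj(1-\Gamma)/(3\eta \mj) + \mu/3}\right\}\Ljj + \frac{\DH}{6\alpha^{j}} I(j < T_{n}^{*}).
\]
Using $\mj/\mjj = \alpha$, the second entry inside the max simplifies to $1/(\alpha + \eta\mu \mj/(4b(1-\Gamma)))$. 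The hypothesis $m_{0}\ge 4\alpha^{1/\xi} b$ together with $1-\Gamma\le 1$ yields $\mj/(4b(1-\Gamma))\ge \alpha^{j+1/\xi}$, so
\[
\Lj \;\le\; \max\!\left\{\frac{1}{\alpha^{1/\xi}},\;\frac{1}{\alpha + \eta\mu\alpha^{j+1/\xi}}\right\}\Ljj + \frac{\DH}{6\alpha^{j}}I(j<T_{n}^{*}).
\]

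Next I would show that the max above is bounded by $\lambda_{j}^{-1}$ as defined in Theorem \ref{thm:mainL2}. For any $j$ it is at most $1/\alpha$ (second branch with $\mu=0$), and for $j\ge T_{\kappa}^{*}$ one has $\alpha^{j}\ge 1/(\eta\mu)$, hence $\eta\mu \alpha^{j+1/\xi}\ge \alpha^{1/\xi}\ge \alpha$, so the second branch is at most $1/\alpha^{1/\xi}$ and the max reduces to $1/\alpha^{1/\xi}$. This matches the definition of $\lambda_{j}$ in the theorem statement verbatim. Then a straightforward induction on $T$, using $\td{\lambda}_{T}\le \lambda_{T}$ and treating the regimes $T<T_{n}^{*}$ and $T\ge T_{n}^{*}$ separately exactly as in the proof of Theorem \ref{thm:mainL2}, establishes
\[
\L_{T} \;\le\; \Lambda_{T}^{-1}\L_{0} \;+\; \td{\Lambda}_{T}^{-1}\frac{\DH}{6}(T\wedge T_{n}^{*}).
\]

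Finally, it remains to bound $\L_{0}$. Here the regularized case differs from the unregularized one because $F$ is no longer smooth, so I cannot relate $F(\td{x}_{0})-F(x^{*})$ to $\Dx$ via smoothness. Instead, I use $\DF$ directly for the function-value part and control the Bregman part via the coefficient:
\[
\L_{0} \;\le\; \DF + \left(\frac{4b(1-\Gamma)}{3\eta L m_{0}} + \frac{\mu}{3L}\right)\Dx.
\]
From $m_{0}\ge 4\alpha^{1/\xi} b$ we get $4b(1-\Gamma)/m_{0}\le \alpha^{-1/\xi}\le 1-6\eta L/(\text{anything})$, and combined with $\eta\mu\le \eta L\le 1/6$, the bracket is bounded by $1/(3\eta L)$, giving $\L_{0}\le \DF + \Dx/(3\eta L)$. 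Combined with $\L_{T}\ge \E(F(\td{x}_{T}) - F(x^{*}))$, this finishes the proof. The main obstacle is purely bookkeeping: verifying that the parameter conditions of Theorem \ref{thm:one_epoch_simplify} are met by the schedule in Theorem \ref{thm:main} (in particular that the ceiling in the definition of $\Bj$ does not spoil the required inequality $\Bj \ge \tfrac{5}{8\Gamma}g(\Gamma/3C_{G})\mj/g(1/\mj)$) and carefully tracking constants through the Lyapunov coefficient so that $\L_{0}$ telescopes to the stated form.
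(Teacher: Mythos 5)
Your proposal is correct and follows essentially the same route as the paper's proof: identical Lyapunov function, identical invocation of Theorem \ref{thm:one_epoch_simplify}, identical identification of $\lambda_j^{-1}$ via the two-regime max, and the same bound on $\L_0$ in terms of $\DF$ and $\Dx$. Two small bookkeeping points to clean up: (a) the ratio in your displayed max has $\mj$ and $\mjj$ swapped (it should be $\frac{4b(1-\Gamma)/(3\eta\mj)}{4b(1-\Gamma)/(3\eta\mjj)+\mu/3}$, as this is what simplifies to $1/(\alpha+\eta\mu\mj/(4b(1-\Gamma)))$, the expression you correctly state afterward); and (b) in bounding $\L_0$, dropping the $(1-\Gamma)$ factor and using only $4b(1-\Gamma)/m_0\le\alpha^{-1/\xi}=4\Gamma\le 1$ is not quite tight enough, since then $\frac{1}{3\eta L}+\frac{1}{18\eta L}>\frac{1}{3\eta L}$; you need to retain the factor and use $4\Gamma(1-\Gamma)\le 3/4$ (valid because $\Gamma\le 1/4$), which gives $\frac{1}{4\eta L}+\frac{1}{18\eta L}\le\frac{1}{3\eta L}$ as in the paper.
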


Theorem \ref{thm:main} gives almost the same result as Theorem \ref{thm:mainL2}, except that the second term is loose up to a term $\frac{\eta L}{b}$. As mentioned before, this is a constant in our setting and thus the gap is negligible in terms of the theoretical complexity. 

Applying the same argument as in Theorem \ref{thm:complexityL2}, we can derive the theoretical complexity. Again it coincides with Theorem \ref{thm:complexityL2} in the unregularized case with the usual strong convexity condition (i.e., assumption \textbf{A}2). 
\begin{theorem}\label{thm:complexity}
Under the specification of Theorem \ref{thm:main}, we have 
\[\E\comp(\eps) = O\lb \frac{A(\eps)}{g(1 / A(\eps))}\wedge \lb A(\eps) + n\log A(\eps)\rb\rb,\]
where $\td{\kappa} = \alpha^{T_{n}^{*}} + \kappa$ and
\[A(\eps) = \td{O}\lb\min\left\{\frac{\D}{\eps}, \kappa\lb\frac{\Dx + \DF}{\eps \kappa}\rb^{\xi}_{*} + \frac{\DH}{\eps}, \td{\kappa}\lb \frac{\D}{\eps\td{\kappa}}\rb_{*}^{\xi}\right\}\rb.\]
\end{theorem}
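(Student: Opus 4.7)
My plan is to adapt the proof of Theorem~\ref{thm:complexityL2} to the composite mirror-descent setting, using Theorem~\ref{thm:main} as the one-epoch ingredient in place of Theorem~\ref{thm:mainL2}. First I would introduce two threshold indices
\[
T^{(1)}(\eps) = \min\left\{T:\Lambda_T\ge \frac{2(\DF + \Dx/(3\eta L))}{\eps}\right\}, \quad T^{(2)}(\eps) = \min\left\{T:\td\Lambda_T\ge \frac{\DH T_n^*}{3\eps}\right\},
\]
chosen so that each of the two error contributions in Theorem~\ref{thm:main} is at most $\eps/2$ (after upper-bounding $T\wedge T_n^*$ by $T_n^*$ in the second term). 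This yields $T(\eps)\le \max\{T^{(1)}(\eps), T^{(2)}(\eps)\}$, reducing the task to upper-bounding $\alpha^{T^{(i)}(\eps)}$ for $i=1,2$.

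Next I would exploit the piecewise structure of $\lambda_j$ and $\td\lambda_j$ exactly as in the proof of Theorem~\ref{thm:complexityL2}. For $T\le T_\kappa^*$, $\Lambda_T\ge \alpha^T$, and for $T>T_\kappa^*$, $\Lambda_T\ge (\kappa/\eta L)\alpha^{(T-T_\kappa^*)/\xi}$ via the identity $\alpha^{T_\kappa^*}=\kappa/(\eta L)$; inverting these two regimes yields
\[
\alpha^{T^{(1)}(\eps)} = \td O\lb \min\left\{\frac{\Dx+\DF}{\eps},\; \kappa\lb\frac{\Dx+\DF}{\eps\kappa}\rb_*^\xi\right\}\rb.
\]
The analogous branching for $\td\Lambda_T$ at $T_n^*\vee T_\kappa^*$, with $\alpha^{T_n^*\vee T_\kappa^*}$ lower bounded by $\td\kappa$ up to constants, gives $\alpha^{T^{(2)}(\eps)} = \td O\lb\min\{\DH/\eps,\ \td\kappa(\DH/(\eps\td\kappa))_*^\xi\}\rb$. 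Since $T_n^*=O(\log n)$ and thus $\Dx+\DF+\DH T_n^* = \td O(\D)$, summing the two bounds and using monotonicity of $x\mapsto x^{1-\xi}$ to merge the branches produces $\alpha^{T(\eps)} = \td O(A(\eps))$.

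Finally, I would convert this to an IFO complexity via \eqref{eq:generic_complexity}. Geometric growth of $\mj$ gives $\sum_{j=1}^{T(\eps)}\mj = O(\alpha^{T(\eps)})$. For the batch-size term, the key observation is that since $g$ is increasing with $g(0^+)=0$, the map $x\mapsto x/g(1/x)$ is strictly increasing on $\R^+$, so the sequence $\Bj = O(\mj/g(1/\mj))$ is monotone and grows by ratio at least $\alpha$; hence its partial sum is dominated by its final term, giving $\sum_{j=1}^{T(\eps)}\Bj = O(\alpha^{T(\eps)}/g(1/\alpha^{T(\eps)}))$. The truncation $\Bj\le n$ separately yields the trivial $O(nT(\eps))$ bound. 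Combining with $T(\eps)=O(\log A(\eps))$ produces the claimed $\E\comp(\eps) = O(A(\eps)/g(1/A(\eps))\wedge (A(\eps)+n\log A(\eps)))$.

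The principal obstacles I anticipate are (i) correctly carrying the additional $\DF$ term through the analysis, which is absent in Theorem~\ref{thm:complexityL2} because $\DF\le\Dx$ holds automatically in the unregularized Euclidean case but not in general; and (ii) verifying that the geometric-series estimate for $\sum\Bj$ remains valid for any increasing $g$ with $g(0^+)=0$, rather than the polynomial form $g(x)=x$ implicit in the Euclidean analysis. Both should be manageable: the first changes only constants in the leading $\D$ bound, and the second reduces to the elementary monotonicity check on $x\mapsto x/g(1/x)$.
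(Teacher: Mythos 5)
Your proposal is correct and follows essentially the same route as the paper's proof: define two threshold indices balancing the $\Dx + \DF$ and $\DH T_n^*$ contributions, invert the piecewise-geometric growth of $\Lambda_T$ and $\td\Lambda_T$ exactly as in Theorem~\ref{thm:complexityL2}, and bound $\sum_j \Bj$ by observing that $\Bj/B_{j-1} = \alpha\, g(1/\mjj)/g(1/\mj) \ge \alpha$ since $g$ is increasing, yielding $O(B_{T(\eps)}) = O(\alpha^{T(\eps)}/g(\alpha^{-T(\eps)}))$ when $T(\eps)\le T_n^*$ and the trivial $O(nT(\eps))$ otherwise. The only cosmetic differences are your constants in $T^{(1)},T^{(2)}$ and your phrasing of the batch-size growth via monotonicity of $x\mapsto x/g(1/x)$; the substance coincides with the paper's argument.
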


Interestingly, in the uniformly convex case \citep{juditsky2014deterministic}, where 
\[\BD{w}(x, y) \ge G(x - y) = \frac{1}{q}\|x - y\|^{q},\]
we can set $g(\lambda) = \lambda^{q - 1}$. Then Theorem \ref{thm:complexity} implies that
\begin{equation}\label{eq:uniform_convex1}
\E\comp(\eps) = O\lb A(\eps)^{q} \wedge \lb A(\eps) + n\log A(\eps)\rb\rb.
\end{equation}
Recalling that $\tto$ hides the negligible terms $\lb \frac{D}{\eps \kappa}\rb^{2\xi}$ and $\td{\kappa} = \alpha^{T_{n}^{*}} + \kappa = O(n + \kappa)$, we can rewrite \eqref{eq:uniform_convex1} as
\begin{equation*}
  \E\comp(\eps) = \tto\lb \lb\frac{D}{\eps}\rb^{q} \wedge \lb\kappa^{q} + \lb\frac{\DH}{\eps}\rb^{q}\rb\wedge \lb n + \frac{D}{\eps}\rb \wedge \lb n + \kappa\rb\rb.
\end{equation*}
The first term matches the bound in \cite{srebro11}. However, the other terms have not been investigated in the literature to the best of our knowledge.

\section{Conclusions}
We have presented SCSG, a gradient-based algorithm for the convex finite-sum optimization problem, which is $\eps$-independent and almost-universal.  These properties arise from two ideas:  \emph{geometrization} and \emph{batching variance reduction}. SCSG achieves strong adaptivity to both the target accuracy and to strong convexity with complexity $\tto\lb\frac{\D^{2}}{\eps^{2}}\wedge \lb\kappa^{2} + \frac{\DH^{2}}{\eps^{2}}\rb\wedge \lb n + \frac{\D}{\eps}\rb \wedge \lb n + \kappa\rb\rb$ up to negligible terms.  This is strictly better than other existing adaptive algorithms. We also present a mirror-proximal version of SCSG for problems involving non-Euclidean geometry. Our analysis requires the Bregman divergence to be lower bounded by a CHEF, a construct which unifies and generalizes existing work on mirror-descent methods. We derive a set of technical tools to deal with CHEFs which may be of interest in other problems. 

A major direction for further research is to delineate optimal rates for algorithms that exhibit adaptivity.  Our conjecture is that the optimal complexity for a reasonably large class of algorithms that do not require knowledge of $\mu$ is
\[
\td{O}\lb \lb\sqrt{\frac{\Dx}{\eps}} + \frac{\DH^{2}}{\eps^{2}}\rb\wedge \frac{\kappa\DH}{\eps}\wedge \lb n + \sqrt{\frac{n\Dx}{\eps}}\rb \wedge \lb n + \kappa\rb\rb.
\]
We believe that momentum terms are required to achieve such a rate.

\vspace{-0.2cm}
\bibliographystyle{siamplain}
\bibliography{SCSG}

\newpage

\begin{center}
  \begin{Large}
      \textbf{Supplementary Material}
  \end{Large}
\end{center}
~\\

\appendix
\section{Technical Proofs in Section \ref{sec:ana}}\label{app:ana}

\begin{proof}[\textbf{Proof of Lemma \ref{lem:nuknorm}}]
  Using the fact that $\E \|Z\|_{2}^{2} = \E \|Z - \E Z\|_{2}^{2} + \|\E Z\|_{2}^{2}$ (for any random vector $Z$), we have
  \begin{align*}
\lefteqn{\E_{\sI_{k}}\|\nuj_{k}\|_{2}^{2}  = \E_{\sI_{k}}\|\nuj_{k} - \E_{\sI_{k}}\nuj_{k}\|_{2}^{2} + \|\E_{\sI_{k}}\nuj_{k}\|_{2}^{2}} \\
 & = \E_{\sI_{k}}\|\nabla f_{\sI_{k}}(\xj_{k}) - \nabla f_{\sI_{k}}(\xj_{0}) - (\nabla f(\xj_{k}) - \nabla f(\xj_{0}))\|_{2}^{2} + \|\nabla f(\xj_{k}) + \ej\|_{2}^{2}\\
& \le  \frac{1}{\bj n}\lb\sum_{i = 1}^{n} \|\nabla f_{i}(\xj_{k}) - \nabla f_{i}(\xj_{0})\|_{2}^{2}\rb + \|\nabla f(\xj_{k}) + \ej\|_{2}^{2},
\end{align*}
where the last inequality is obtained from Lemma \ref{lem:var_sampling}. By Lemma \ref{lem:cocoercive} with $g = f_{i}, x = \xj_{0}, y = \xj_{k}$,
\begin{align*}
\|\nabla f_{i}(\xj_{k}) - \nabla f_{i}(\xj_{0})\|_{2}^{2}&\le 2L\lb f_{i}(\xj_{0}) - f_{i}(\xj_{k}) + \la\nabla f_{i}(\xj_{k}), \xj_{k} - \xj_{0}\ra\rb.
\end{align*}
The proof is completed by the fact that $\|a + b\|_{2}^{2} = 2\|a\|_{2}^{2} + 2\|b\|_{2}^{2} - \|a - b\|_{2}^{2} \le 2\|a\|_{2}^{2} + 2\|b\|_{2}^{2}$.
\end{proof}

\begin{proof}[\textbf{Proof of Lemma \ref{lem:ejL2}}]
Using the fact that $\|a + b\|_{2}^{2} \le 2\|a\|_{2}^{2} + 2\|b\|_{2}^{2}$ and $\nabla f(x^{*}) = 0$,  we have
\begin{align*}
&  \E \|\ej\|_{2}^{2}  \le 2\E \|\ej - \nabla f_{\I_{j}}(x^{*})\|_{2}^{2} + 2\E \|\nabla f_{\I_{j}}(x^{*})\|_{2}^{2}\\
& = 2\E \left\|\frac{1}{\Bj}\sum_{i\in \Ij}(\nabla f_{i}(\td{x}_{j-1}) - \nabla f_{i}(x^{*})) - (\nabla f(\td{x}_{j-1}) - \nabla f(x^{*}))\right\|_{2}^{2}\\
& \hspace{.5cm} + 2\E \left\|\frac{1}{\Bj}\sum_{i\in \Ij}\nabla f_{i}(x^{*})\right\|_{2}^{2}.
\end{align*}
Then
\begin{align*}
&\E \left\|\frac{1}{\Bj}\sum_{i\in \Ij}(\nabla f_{i}(\td{x}_{j-1}) - \nabla f_{i}(x^{*})) - (\nabla f(\td{x}_{j-1}) - \nabla f(x^{*}))\right\|_{2}^{2}\\
& \le \frac{I(\Bj < n)}{\Bj}\, \frac{1}{n}\sum_{i=1}^{n}\|\nabla f_{i}(\td{x}_{j-1}) - \nabla f_{i}(x^{*})\|_{2}^{2}\quad (\mbox{Lemma \ref{lem:var_sampling}})\\
& \le \frac{I(\Bj < n)}{\Bj}\, \frac{2L}{n}\sum_{i=1}^{n}\lb f_{i}(\td{x}_{j-1}) - f_{i}(x^{*}) - \la\nabla f_{i}(x^{*}), \td{x}_{j-1} - x^{*}\ra\rb \quad (\mbox{Lemma \ref{lem:cocoercive}})\\
& = \frac{I(\Bj < n)}{\Bj}\, 2L(f(\td{x}_{j - 1}) - f(x^{*}) - \la \nabla f(x^{*}), \td{x}_{j-1} - x^{*}\ra)\\
& = \frac{I(\Bj < n)}{\Bj}\, 2L(f(\td{x}_{j - 1}) - f(x^{*})).
\end{align*}
On the other hand, by Lemma \ref{lem:var_sampling} again, we obtain that 
\[\E \left\|\frac{1}{\Bj}\sum_{i\in \Ij}\nabla f_{i}(x^{*})\right\|_{2}^{2}\le \frac{\H\, I(\Bj < n)}{\Bj}.\]
Putting the pieces together we prove the result.
\end{proof}

To prove the remaining two lemmas, we need an extra lemma that justifies the condition $\E |D_{N}| < \infty$ when applying geometrization (Lemma \ref{lem:geom}) in our proofs for different choices of $\{D_{k}\}$. The proof is distracting and relegated to the end of this section. 
\begin{lemma}\label{lem:geom_finite_L2}
 Assume that $\etaj L \le 1 / 2$. Then for any $j$, 
 \[\E \|\td{x}_{j} - x^{*}\|_{2}^{2} < \infty, \quad \E \|\nuj_{\Nj}\|_{2}^{2} < \infty, \quad \E |\la \ej, \td{x}_{j} - \td{x}_{j-1}\ra| < \infty.\]
\end{lemma}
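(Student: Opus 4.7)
The plan is to proceed by induction on the epoch index $j$, the base case being built into the assumption $\Dx < \infty$. Conditional on $\td{x}_{j-1}$ and $\Ij$, the inner loop starts from the deterministic point $\xj_0 = \td{x}_{j-1}$ and the bias $\ej = \nabla f_{\Ij}(\td{x}_{j-1}) - \nabla f(\td{x}_{j-1})$ is fixed. Writing $V_k = \|\xj_k - x^*\|_2^2$ and $\E^*$ for the conditional expectation over $\sI_0, \sI_1, \ldots$, the finiteness of $\E \|\td{x}_j - x^*\|_2^2 = \E V_{\Nj}$ reduces to (i) a geometric-growth bound on $\E^* V_k$ over the inner loop and (ii) matching that growth against the tail of $P(N_j = k)$.

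For step (i), I would expand the one-step update to get
\[\E^*[V_{k+1}\mid \xj_k] = V_k - 2\etaj\la \nabla f(\xj_k) + \ej,\, \xj_k - x^*\ra + \etaj^2\, \E^*\|\nuj_k\|_2^2,\]
invoke Lemma \ref{lem:nuknorm} to bound $\E^*\|\nuj_k\|_2^2$, then absorb the resulting $2\etaj^2\|\nabla f(\xj_k)\|_2^2$ term against the negative drift using co-coercivity (which is valid since $\etaj L \le 1$), apply Young's inequality to the $\ej$ cross-term, and control the Bregman term by smoothness followed by $\|\xj_0 - \xj_k\|_2^2 \le 2V_0 + 2V_k$. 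This yields a recursion of the form
\[\E^*[V_{k+1}\mid \xj_k] \le (1+\rho_j)\,V_k + C_j\bigl(V_0 + \|\ej\|_2^2\bigr),\]
from which $\E^* V_k \le (1+\rho_j)^k V_0 + \bar{C}_j(1+\rho_j)^k$ by iteration.

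For step (ii), since $N_j$ is drawn before the inner loop and is therefore independent of $\{\xj_k\}$, one has
\[\E^* V_{\Nj} = \sum_{k\ge 0} P(N_j = k)\,\E^* V_k = \frac{\bj}{\mj+\bj}\sum_{k\ge 0}\left(\frac{\mj}{\mj+\bj}\right)^k \E^* V_k,\]
which is finite provided $(1+\rho_j)\mj/(\mj+\bj) < 1$. Taking outer expectation over $\td{x}_{j-1}$ and $\Ij$ and combining the inductive hypothesis with Lemma \ref{lem:ejL2} to control $\E\|\ej\|_2^2$ then establishes the first claim. The second claim follows immediately, because Lemma \ref{lem:nuknorm} bounds $\E^*\|\nuj_k\|_2^2$ by a linear combination of $V_k$, $V_0$, and $\|\ej\|_2^2$, and the same geometric-sum argument applies. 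The third claim follows from Cauchy–Schwarz,
\[\E\bigl|\la \ej,\, \td{x}_j - \td{x}_{j-1}\ra\bigr| \le \sqrt{\E\|\ej\|_2^2}\cdot\sqrt{\E\|\td{x}_j - \td{x}_{j-1}\|_2^2},\]
together with $\|\td{x}_j - \td{x}_{j-1}\|_2^2 \le 2\|\td{x}_j - x^*\|_2^2 + 2\|\td{x}_{j-1} - x^*\|_2^2$ and the first claim.

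The main obstacle is making $\rho_j$ sharp enough that $(1+\rho_j)\mj/(\mj+\bj) < 1$ holds under only the hypothesis $\etaj L \le 1/2$. The crude bound $\rho_j \sim \etaj^2 L^2 / \bj$ need not beat $\bj/\mj$ when $\mj \gg \bj^2$, so the argument likely needs a refined Lyapunov — e.g.\ tracking $V_k$ jointly with $f(\xj_k) - f(x^*)$ or with $\BD{f}(\xj_0, \xj_k)$ — that retains the negative term $-2\etaj(1 - 2\etaj L)(f(\xj_k) - f(x^*))$ rather than discarding it, so that convexity genuinely cuts the growth rate and makes the geometric sum converge.
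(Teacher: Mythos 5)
There is a genuine gap, and you essentially flag it yourself in your final paragraph. Your one-step recursion, which applies Young's inequality to the cross term $\la\ej, \xj_k - x^*\ra$, yields geometric growth $\E^* V_k \le (1+\rho_j)^k(V_0 + \bar C_j)$, and making the geometric sum converge requires $(1+\rho_j)\mj/(\mj+\bj) < 1$. That rate comparison is not available under the hypothesis $\etaj L \le 1/2$ alone — $\rho_j$ scales like $\etaj^2 L^2/\bj$ or worse and need not beat $\bj/\mj$. The remedy you propose, retaining the negative drift $-2\etaj(1-2\etaj L)(f(\xj_k)-f(x^*))$, does not close the gap: without strong convexity, $f(\xj_k)-f(x^*)$ gives no control on $V_k$, so the drift cannot cancel a positive $\rho_j V_k$ term and the growth rate stays geometric.

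The paper avoids the rate comparison entirely by aiming for \emph{polynomial} growth in $k$, which is dominated by every geometric tail. It introduces an auxiliary bias-free SVRG sequence $\yj_k$ launched from the same point $\yj_0 = \xj_0$ and driven by the same mini-batches $\sI_{k-1}$, but using the exact anchor gradient $\nabla f(\xj_0)$ in place of $\mu_j$. Writing the update as a deterministic map $\mathrm{Id}-\etaj\nabla f_{\sI_{k-1}}$, which is non-expansive for $\etaj L \le 1/2$, the iterates differ only by the accumulated bias, giving $\|\xj_k - \yj_k\|_2 \le \etaj k\|\ej\|_2$. The auxiliary sequence obeys the standard SVRG one-step inequality, whose negative drift is simply dropped, yielding $\E\|\yj_k - x^*\|_2^2 \le (k+1)\E\|\xj_0 - x^*\|_2^2$. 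Combining via $\|a+b\|_2^2 \le 2\|a\|_2^2 + 2\|b\|_2^2$ produces $\E\|\xj_k - x^*\|_2^2 = O(k^2)$, and since $\E \Nj^2 < \infty$ for a geometric variable, the first claim follows unconditionally. Your treatment of the second and third claims (via Lemma \ref{lem:nuknorm} and Cauchy--Schwarz respectively) matches the paper once the first claim is in hand, but the coupling step is the missing idea.
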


\begin{proof}[\textbf{Proof of Lemma \ref{lem:dual}}]
By definition,
  \begin{align}
&\E_{\sI_{k}}\|\xj_{k+1} - u\|_{2}^{2}  = \|\xj_{k} - u\|_{2}^{2} - 2\etaj\E_{\sI_{k}}\la  \nuj_{k}, \xj_{k} - u\ra + \etaj^{2} \E_{\sI_{k}}\|\nuj_{k}\|_{2}^{2}\nonumber\\
& = \|\xj_{k} - u\|_{2}^{2} - 2\etaj\la \E_{\sI_{k}}\nuj_{k}, \xj_{k} - u\ra + \etaj^{2} \E_{\sI_{k}}\|\nuj_{k}\|_{2}^{2}\nonumber\\
& = \|\xj_{k} - u\|_{2}^{2} - 2\etaj\la \nabla f(\xj_{k}), \xj_{k} - u \ra -2\etaj \la \ej, \xj_{k} - u\ra+ \etaj^{2} \E_{\sI_{k}}\|\nuj_{k}\|_{2}^{2}.\label{eq:dual1}
  \end{align}
Let $\Ej$ denote the expectation with respect to $\I_{j}$ and $\sI_{0}, \sI_{1}, \ldots$. Then 
\[\Ej\|\xj_{k+1} - u\|_{2}^{2} = \Ej\|\xj_{k} - u\|_{2}^{2} - 2\etaj\Ej\la \nabla f(\xj_{k}), \xj_{k} - u \ra -2\etaj \Ej\la \ej, \xj_{k} - u\ra+ \etaj^{2} \E_{j}\|\nuj_{k}\|_{2}^{2}.\]
Since $u$ is independent of $\Ij$ and $\E_{\Ij}\ej = 0$, we have
\[\Ej\la \ej, u\ra = \Ej\E_{\I_{j}}\la \ej, u\ra = \Ej \la \E_{\I_{j}}\ej, u\ra = 0.\]
Similarly, since $\xj_{0}$ is also independent of $\Ij$, 
\[\Ej \la\ej, \xj_{0}\ra = 0.\]
Therefore,
\[\Ej\la \ej, \xj_{k} - u\ra  = \Ej\la\ej, \xj_{k} - \xj_{0}\ra.\]
Let $D_{k} = \E \|\xj_{k} - u\|_{2}^{2}$. By Lemma \ref{lem:geom_finite_L2}, 
\[\E D_{\Nj}\le 2\E\|\td{x}_{j} - x^{*}\|_{2}^{2} + 2\E \|u - x^{*}\|_{2}^{2} < \infty.\]
Let $k = \Nj$ in \eqref{eq:dual1} and by Lemma \ref{lem:geom} and Lemma \ref{lem:geom_finite_L2},
\begin{align*}
&2\etaj\E_{\Nj}\Ej\la \nabla f(\xj_{\Nj}), \xj_{\Nj} - u \ra\\
&   = \E_{\Nj}\lb\Ej\|\xj_{\Nj} - u\|_{2}^{2} - \Ej\|\xj_{\Nj+1} - u\|_{2}^{2}\rb  - 2\etaj \E_{\Nj}\Ej\la\ej, \xj_{\Nj} - \xj_{0}\ra + \etaj^{2} \E_{\Nj}\E_{j}\|\nuj_{\Nj}\|_{2}^{2}\\
& = \frac{\bj}{\mj}\lb \Ej\|\xj_{0} - u\|_{2}^{2} - \E_{\Nj}\Ej\|\xj_{\Nj} - u\|_{2}^{2}\rb - 2\etaj \E_{\Nj}\Ej\la\ej, \xj_{\Nj} - \xj_{0}\ra + \etaj^{2} \E_{\Nj}\E_{j}\|\nuj_{\Nj}\|_{2}^{2}.
\end{align*}

By definition, $\td{x}_{j} = \xj_{\Nj}, \td{x}_{j-1} = \xj_{0}$ and thus
\begin{align*}
&2\etaj\E \la \nabla f(\td{x}_{j}), \td{x}_{j} - u \ra = \frac{\bj}{\mj}\lb \E\|\td{x}_{j-1} - u\|_{2}^{2} - \E\|\td{x}_{j} - u\|_{2}^{2}\rb - 2\etaj \E\la\ej, \td{x}_{j} - \td{x}_{j-1}\ra + \etaj^{2} \E\|\nuj_{\Nj}\|_{2}^{2}.
\end{align*}
The proof is completed by Lemma \ref{lem:nuknorm} which yields $\etaj^{2} \E\|\nuj_{\Nj}\|_{2}^{2}\le \E\Wj$.
\end{proof}

\begin{proof}[\textbf{Proof of Lemma \ref{lem:xjxj-1L2}}]
  Let $u = \td{x}_{j-1}$, then it is independent of $\Ij$ and $\sI_{0}, \sI_{1}, \ldots$. By Lemma \ref{lem:dual},
  \begin{align}
    &\frac{\bj}{\mj}\E \|\td{x}_{j} - \td{x}_{j-1}\|_{2}^{2} - 2\etaj \E \la\ej, \td{x}_{j-1} - \td{x}_{j}\ra \le -2\etaj \E \la\nabla f(\td{x}_{j}), \td{x}_{j} - \td{x}_{j-1}\ra + \E\Wj.\label{eq:xjxj-1L21}
  \end{align}
Using the fact that $2\la a, b\ra\le \frac{1}{\beta}\|a\|_{2}^{2} + \beta\|b\|_{2}^{2}$ for any $\beta > 0$, we have
\[2\la\ej, \td{x}_{j-1} - \td{x}_{j}\ra\le \frac{\bj}{\etaj\mj}\frac{\gamma_{j}}{1 + \gamma_{j}}\|\td{x}_{j} - \td{x}_{j - 1}\|_{2}^{2} + \frac{\etaj \mj}{\bj}\frac{1 + \gamma_{j}}{\gamma_{j}}\|\ej\|_{2}^{2}.\]
This implies that
\[\frac{\bj}{\mj}\|\td{x}_{j} - \td{x}_{j-1}\|_{2}^{2} \ge \frac{1+\gamma_{j}}{\gamma_{j}} 2\etaj \E \la\ej, \td{x}_{j-1} - \td{x}_{j}\ra - \frac{\etaj^{2}\mj}{\bj}\lb\frac{1 + \gamma_{j}}{\gamma_{j}}\rb^{2}\|\ej\|_{2}^{2}.\]
By \eqref{eq:xjxj-1L21}, we obtain that
\begin{align*}
  \lefteqn{\frac{2\etaj}{\gamma_{j}}\E \la\ej, \td{x}_{j-1} - \td{x}_{j}\ra}\\
&\le -2\etaj \E \la\nabla f(\td{x}_{j}), \td{x}_{j} - \td{x}_{j-1}\ra + \E\Wj + \frac{\etaj^{2}\mj}{\bj}\lb\frac{1 + \gamma_{j}}{\gamma_{j}}\rb^{2}\E\|\ej\|_{2}^{2}.
\end{align*}
The proof is then completed by multiplying both sides by $\gamma_{j}$.
\end{proof}

\begin{proof}[\textbf{Proof of Lemma \ref{lem:geom_finite_L2}}]
We prove the first claim by induction. When $j = 0$, the claim is obvious. Suppose we prove the claim for $j - 1$, i.e. 
\[\E\|\xj_{0} - x^{*}\|_{2}^{2} = \E \|\td{x}_{j-1} - x^{*}\|_{2}^{2} < \infty.\]
Let $\yj_{k}$ be another sequence constructed as follows:
\[\yj_{0} = \xj_{0}, \quad \yj_{k} = \yj_{k-1} - \etaj\zetaj_{k}, \quad \mbox{where }\zetaj = \nabla f_{\sI_{k-1}}(\yj_{k-1}) - \nabla f_{\sI_{k-1}}(\yj_{0}) + \nabla f(\yj_{0}).\]
In other words, $\yj_{k}$ is a hypothetical sequence of iterates produced by SVRG initialized at $\xj_{0}$ and updated using the same sequence of random subsets. Let $\mathrm{Id}$ denote the identity mapping. Then 
\[\xj_{k} - \yj_{k} = \lb\mathrm{Id} - \etaj \nabla f_{\sI_{k-1}}\rb(\xj_{k-1}) - \lb\mathrm{Id} - \etaj \nabla f_{\sI_{k-1}}\rb(\yj_{k-1}) - \etaj \ej.\]
where we use the fact that $\mu_{j} = \nabla f(\xj_{0}) + \ej = \nabla f(\yj_{0}) + \ej$. Since $f_{\sI_{k-1}}$ is $L$-smooth and convex and $\etaj \le 1 / 2L$, it is well known that $\mathrm{Id} - \etaj \nabla f_{\sI_{k-1}}$ is a non-expansive operator. Thus,
\begin{align*}
  \|\xj_{k} - \yj_{k}\|_{2}&\le \|\lb\mathrm{Id} - \etaj \nabla f_{\sI_{k-1}}\rb(\xj_{k-1}) - \lb\mathrm{Id} - \etaj \nabla f_{\sI_{k-1}}\rb(\yj_{k-1})\|_{2} + \etaj \|\ej\|_{2}\\
& \le \|\xj_{k-1} - \yj_{k-1}\|_{2} + \etaj\|\ej\|_{2}.
\end{align*}
As a result,
\begin{equation}
  \label{eq:xjkyjk}
  \|\xj_{k} - \yj_{k}\|_{2}\le \|\xj_{0} - \yj_{0}\|_{2} + \etaj k \|\ej\|_{2} = \etaj k \|\ej\|_{2}.
\end{equation}
On the other hand, By \eqref{eq:SVRG1} (Theorem 1 of \cite{SVRG}) and the convexity of $f$, 
\begin{align}
\lefteqn{2\etaj(1 - 2\etaj L) \E (f(\yj_{k}) - f(x^{*})) + \E \|\yj_{k+1} - x^{*}\|_{2}^{2}}\nonumber \\
& \le 4\eta^{2}L \E (f(\yj_{0}) - f(x^{*})) + \E \|\yj_{k} - x^{*}\|_{2}^{2}.\nonumber
\end{align}
Since $f(\yj_{k}) - f(x^{*})\ge 0$ and $1 - 2\etaj L\ge 0$, we have
\[\E \|\yj_{k+1} - x^{*}\|_{2}^{2} \le 4\eta^{2}L \E (f(\yj_{0}) - f(x^{*})) + \E \|\yj_{k} - x^{*}\|_{2}^{2}.\]
As a result, 
\begin{align}
  &\E \|\yj_{k} - x^{*}\|_{2}^{2}\le 4k\eta^{2}L \E (f(\yj_{0}) - f(x^{*})) + \E\|\yj_{0} - x^{*}\|_{2}^{2}\nonumber\\
&\le (4k\etaj^{2} L^{2} + 1)\E\|\yj_{0} - x^{*}\|_{2}^{2} \le (k + 1)\E\|\xj_{0} - x^{*}\|_{2}^{2}.  \label{eq:yjkxstar}
\end{align}
Putting \eqref{eq:xjkyjk} and \eqref{eq:yjkxstar} together, and using the fact that $\|a + b\|_{2}^{2}\le 2\|a\|_{2}^{2} + 2\|b\|_{2}^{2}$, we obtain that
\begin{align}
  \E\|\xj_{k} - x^{*}\|_{2}^{2} &\le 2 \E \|\xj_{k} - \yj_{k}\|_{2}^{2} + 2 \E \|\yj_{k} - x^{*}\|_{2}^{2}\nonumber\\
& \le k^{2}\lb \etaj^{2} \E \|\ej\|_{2}^{2} + \E\|\xj_{0} - x^{*}\|_{2}^{2}\rb.\label{eq:xjkxstar}
\end{align}
By Lemma \ref{lem:ejL2}, 
\begin{equation*}
  \E \|\ej\|_{2}^{2}\le 4L\E (f(\xj_{0}) - f(x^{*})) + 2\H\le 2L^{2} \E\|\xj_{0} - x^{*}\|_{2}^{2} + 2\H.
\end{equation*}

~\\
\noindent By \eqref{eq:xjkxstar},
\[\E\|\xj_{k} - x^{*}\|_{2}^{2}\le 2k^{2}\lb \E\|\xj_{0} - x^{*}\|_{2}^{2} + \etaj^{2}\H\rb.\]
By the induction hypothesis,
\[\E\|\td{x}_{j} - x^{*}\|_{2}^{2} \le 2\E \Nj^{2} \lb \E\|\xj_{0} - x^{*}\|_{2}^{2} + \etaj^{2}\H\rb < \infty,\]
and
\[\E \|\ej\|_{2}^{2} < \infty.\]
By Lemma \ref{lem:nuknorm}, 
\begin{align*}
  \E\|\nuj_{k}\|_{2}^{2}&\le 2L\E\lb(f(\xj_{0}) - f(\xj_{k})) - \la\nabla f(\xj_{k}), \xj_{0} - \xj_{k}\ra\rb + 2\E \|\nabla f(\xj_{k})\|_{2}^{2} + 2\E\|\ej\|_{2}^{2}\\
& \le L^{2} \|\xj_{0} - \xj_{k}\|_{2}^{2} + 2L^{2}\|\xj_{k} - x^{*}\|_{2}^{2} + 2\E\|\ej\|_{2}^{2}\\
& \le 2L^{2}\|\xj_{0} - x^{*}\|_{2}^{2} + 4L^{2}\|\xj_{k} - x^{*}\|_{2}^{2} + 2\E\|\ej\|_{2}^{2}.
\end{align*}
Then the first claim yields
\[\E \|\nuj_{\Nj}\|_{2}^{2}\le 2L^{2} \E \|\td{x}_{j-1} - x^{*}\|_{2}^{2} + 4L^{2} \E \|\td{x}_{j} - x^{*}\|_{2}^{2} + 2\E\|\ej\|_{2}^{2} < \infty.\]
Finally, 
\begin{align*}
  &\E |\la \ej, \td{x}_{j} - \td{x}_{j-1}\ra|\le \frac{1}{2}\E \|\ej\|_{2}^{2} + \frac{1}{2}\E \|\td{x}_{j} - \td{x}_{j-1}\|_{2}^{2} \\
& \le \frac{1}{2}\E \|\ej\|_{2}^{2} + \E \|\td{x}_{j} - x^{*}\|_{2}^{2} + \E\|\td{x}_{j-1} - x^{*}\|_{2}^{2} < \infty.
\end{align*}
\end{proof}

\section{Technical Proofs in Section \ref{sec:ana_general}}\label{app:ana_general}
\subsection{Technical tools to handle CHEFs}
In this section we establish several technical tools to tackle CHEFs. These results can be of independent interest because they unify and generalize various fundamental results which are widely used in the analysis of first-order methods. 

The first lemma shows the sub-homogeneity of $G^{*}$.
\begin{lemma}\label{lem:CHEF}[A key property of CHEF]
For any $w \in \X_{0}$ and $\lambda \ge 0$,
\[G^{*}(\lambda w)\le \lambda g^{-1}(\lambda)G^{*}(w).\]  
\end{lemma}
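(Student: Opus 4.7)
The strategy is a direct change of variables in the dual representation of $G^{*}$, combined with the sub-homogeneity bound in \textbf{C}2. First, I would dispose of the trivial case $\lambda=0$: since $G(0)=0$, convexity and symmetry give $G\ge 0$, so $G^{*}(0)=\sup_{y}(-G(y))=0$, while the right-hand side is also $0$ (using the fact that $g^{-1}(\lambda)\to 0$ as $\lambda \to 0$, which makes the product $\lambda g^{-1}(\lambda)G^{*}(w)$ vanish at $\lambda=0$).

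For $\lambda>0$, the plan is to start from
\[
G^{*}(\lambda w)=\sup_{y\in \X_{0}}\langle \lambda w,y\rangle-G(y)
\]
and substitute $y = g^{-1}(\lambda)\,u$, which is a bijection on $\X_{0}$ because $g^{-1}(\lambda)>0$. This rewrites the supremum as
\[
G^{*}(\lambda w)=\sup_{u\in \X_{0}}\;\lambda g^{-1}(\lambda)\,\langle w,u\rangle-G\bigl(g^{-1}(\lambda)u\bigr).
\]
Then I would invoke \textbf{C}2 with scalar $\mu = g^{-1}(\lambda)$ and vector $u$, which yields
\[
G\bigl(g^{-1}(\lambda)u\bigr)\;\ge\;g^{-1}(\lambda)\cdot g(g^{-1}(\lambda))\cdot G(u)\;=\;\lambda\,g^{-1}(\lambda)\,G(u).
\]
Plugging this lower bound into the previous display and factoring out the positive constant $\lambda g^{-1}(\lambda)$ from the supremum gives
\[
G^{*}(\lambda w)\;\le\;\lambda g^{-1}(\lambda)\sup_{u\in \X_{0}}\bigl(\langle w,u\rangle-G(u)\bigr)\;=\;\lambda g^{-1}(\lambda)\,G^{*}(w),
\]
which is exactly the claim.

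There is no real obstacle here; the only subtle point is verifying that $g^{-1}(\lambda)$ is well-defined and strictly positive for $\lambda>0$, which is where the monotonicity of $g$ and the boundary behavior $\lim_{\lambda\to 0}g^{-1}(\lambda)=0$ stated in the definition of a CHEF come in. The $\lambda=0$ boundary case also requires interpreting $0\cdot g^{-1}(0)$ via this limit, but this is harmless given the definition.
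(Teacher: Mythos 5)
Your proof is correct and essentially identical to the paper's: both perform the same change of variables (rescaling the supremum variable by $g^{-1}(\lambda)$) and then apply property \textbf{C}2 with scalar $g^{-1}(\lambda)$, using the identity $g^{-1}(\lambda)\,g(g^{-1}(\lambda)) = \lambda\,g^{-1}(\lambda)$ to produce the desired factor. The only cosmetic difference is the order of operations (you substitute first, the paper factors first), and you are slightly more careful with the $\lambda=0$ boundary case, noting $G^{*}(0)=0$ exactly rather than just $G^{*}(0)\le 0$.
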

\begin{proof}
When $\lambda = 0$, since $G$ is nonnegative,
\[G^{*}(0) = \sup_{x\in \X_{0}} \la 0, x\ra - G(x)\le 0.\]
We assume that $\lambda > 0$ throughout the rest of the proof. Note that $\X_{0}$ is a Hilbert space and hence a cone; i.e., $x\in \X_{0}\Longrightarrow \lambda x\in \X_{0}$ for any $\lambda > 0$. By definition,
\[G^{*}(\lambda w) = \sup_{x\in \X_{0}}\la \lambda w, x\ra - G(x) = \lambda g^{-1}(\lambda) \left[\sup_{x\in \X_{0}}\la w, \frac{x}{g^{-1}(\lambda)}\ra - \frac{G(x)}{\lambda g^{-1}(\lambda)}\right].\]
By property \textbf{C}2 of CHEF, 
\[G(x)\ge  g^{-1}(\lambda)g(g^{-1}(\lambda))G\lb \frac{x}{g^{-1}(\lambda)}\rb = \lambda g^{-1}(\lambda)G\lb \frac{x}{g^{-1}(\lambda)}\rb.\]
Thus, 
\begin{align*}
  G^{*}(\lambda w)&\le \lambda g^{-1}(\lambda) \left[\sup_{x\in \X_{0}}\la w, \frac{x}{g^{-1}(\lambda)}\ra - G\lb \frac{x}{g^{-1}(\lambda)}\rb\right]\\
& = \lambda g^{-1}(\lambda) \left[\sup_{x\in \X_{0}}\la w, x\ra - G\lb x\rb\right] = \lambda g^{-1}(\lambda)G^{*}(w).
\end{align*}
\end{proof}

The second lemma gives the Fenchel-Young inequality which involves the Bregman divergence and the corresponding dual Bregman divergence. In the special case where $w(x) = \|x\|_{2}^{2} / 2$, it reduces to the basic inequality that $2\la u, z\ra\le \alpha \|z\|_{2}^{2} + \alpha^{-1}\|u\|_{2}^{2}$ for any $\alpha > 0$.
\begin{lemma}\label{lem:bdfy}
  For any $u, y, z\in \X$ and $\alpha > 0$,
\[\la u, z\ra\le \alpha\lb \BD{w}(y + z, y) + \BD{w^{*}}\lb\frac{u}{\alpha} + \nabla w(y), \nabla w(y)\rb\rb.\]
\end{lemma}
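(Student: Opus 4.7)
The plan is to reduce the claimed inequality to the standard Fenchel--Young inequality $w(a) + w^{*}(b) \ge \la a, b\ra$, which holds for any convex $w$ and any $a, b\in\X_{0}$, by expanding the two Bregman divergences on the right-hand side and exploiting the conjugacy identities.

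First I would set $a = y + z$ and $b = u/\alpha + \nabla w(y)$ and write out the two Bregman divergences explicitly:
\begin{align*}
\BD{w}(y+z, y) &= w(y+z) - w(y) - \la \nabla w(y), z\ra,\\
\BD{w^{*}}(u/\alpha + \nabla w(y), \nabla w(y)) &= w^{*}(u/\alpha + \nabla w(y)) - w^{*}(\nabla w(y)) - \la \nabla w^{*}(\nabla w(y)), u/\alpha\ra.
\end{align*}
Assuming $w$ is differentiable and convex (as is standard for the distance-generating function), we have the conjugacy identities $\nabla w^{*}(\nabla w(y)) = y$ and $w^{*}(\nabla w(y)) = \la \nabla w(y), y\ra - w(y)$. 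Substituting these simplifies the second divergence to $w^{*}(u/\alpha + \nabla w(y)) - \la \nabla w(y), y\ra + w(y) - \la y, u/\alpha\ra$.

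Next I would add the two expressions; the $\pm w(y)$ terms cancel and the $-\la \nabla w(y), z\ra - \la \nabla w(y), y\ra$ combine to $-\la \nabla w(y), y+z\ra$, giving
\[
\BD{w}(y+z, y) + \BD{w^{*}}(u/\alpha + \nabla w(y), \nabla w(y)) = w(y+z) + w^{*}(u/\alpha + \nabla w(y)) - \la \nabla w(y), y+z\ra - \la y, u/\alpha\ra.
\]
Applying Fenchel--Young with $a = y+z$ and $b = u/\alpha + \nabla w(y)$ gives
\[
w(y+z) + w^{*}(u/\alpha + \nabla w(y)) \ge \la y+z, u/\alpha + \nabla w(y)\ra = \la y+z, u/\alpha\ra + \la y+z, \nabla w(y)\ra.
\]
Substituting back and cancelling the $\la \nabla w(y), y+z\ra$ terms leaves exactly $\la y+z, u/\alpha\ra - \la y, u/\alpha\ra = \la z, u\ra/\alpha$ on the right. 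Multiplying both sides by $\alpha$ yields the claim.

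The argument is essentially bookkeeping once the Fenchel--Young inequality is invoked; there is no real obstacle. The only subtlety is the implicit smoothness/strict convexity of $w$ needed for the identities $\nabla w^{*}\circ \nabla w = \mathrm{Id}$ and $w^{*}(\nabla w(y)) = \la \nabla w(y), y\ra - w(y)$, but these are standing assumptions on the distance-generating function throughout Section \ref{sec:ana_general}.
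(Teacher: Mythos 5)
Your proof is correct and rests on the same two ingredients as the paper's: the Fenchel--Young inequality for the pair $(w, w^{*})$ and the Legendre conjugacy identities $\nabla w^{*}(\nabla w(y)) = y$ and $w^{*}(\nabla w(y)) + w(y) = \la y, \nabla w(y)\ra$. The organization differs slightly: the paper applies Fenchel--Young to the scaled function $F_{y}(z) = \alpha \BD{w}(y+z, y)$ and then computes its conjugate $F_{y}^{*}(u)$ explicitly, showing it equals $\alpha \BD{w^{*}}(u/\alpha + \nabla w(y), \nabla w(y))$ plus a residual term that vanishes by the conjugacy identity, which the paper proves from scratch; you instead expand both Bregman divergences up front, cancel, and invoke Fenchel--Young for $w$ and $w^{*}$ directly, treating the conjugacy identity as a known fact. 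The two are mathematically the same computation, and your version is arguably a little more streamlined, but it is not a different route.
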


\begin{proof}
  Let $F_{y}(z) = \alpha\BD{w}(y + z, y)$. By the Fenchel-Young inequality,
\[\la u, z\ra\le F_{y}(z) + F_{y}^{*}(u).\]
By definition,
\begin{align*}
  F_{y}^{*}(u) & = \sup_{x\in \X_{0}}\la u, x\ra - F_{y}(x) = \sup_{x\in \X_{0}}\la u, x\ra - \alpha (w(y + x) - w(y) - \la \nabla w(y), x\ra)\\
& = \alpha \lb w(y) + \sup_{x\in \X_{0}}\la \frac{u}{\alpha} + \nabla w(y), x\ra - w(y + x)\rb\\
& = \alpha \lb w(y) - \la \frac{u}{\alpha} + \nabla w(y), y\ra+ \sup_{x\in \X_{0}}\la \frac{u}{\alpha} + \nabla w(y), x + y\ra - w(y + x)\rb\\
& = \alpha \lb w(y) - \la \frac{u}{\alpha} + \nabla w(y), y\ra+ \sup_{x\in \X_{0}}\la \frac{u}{\alpha} + \nabla w(y), x\ra - w(x)\rb\\
& = \alpha \lb w(y) - \la \frac{u}{\alpha} + \nabla w(y), y\ra+ w
^{*}\lb \frac{u}{\alpha} + \nabla w(y)\rb\rb\\
& \stackrel{(i)}{=} \alpha \bigg( w(y) + w^{*}(\nabla w(y)) - \la \nabla w(y), y\ra\\
& \qquad + w^{*}\lb \frac{u}{\alpha} + \nabla w(y)\rb - w^{*}\lb \nabla w(y)\rb - \la\frac{u}{\alpha}, \nabla w^{*}(\nabla w(y))\ra\bigg)\\
& = \alpha \BD{w^{*}}\lb \frac{u}{\alpha} + \nabla w(y), \nabla w(y)\rb + \alpha ( w(y) + w^{*}(\nabla w(y)) - \la \nabla w(y), y\ra),
\end{align*}
where \emph{(i)} uses the property that
\[\nabla w^{*}(\nabla w(y)) = y.\] 
It is left to prove that 
\begin{equation}\label{eq:bdfy1}
w^{*}(\nabla w(y)) + w(y) = \la y, \nabla w(y)\ra.
\end{equation}
By the Fenchel-Young inequality,
\[\la y, \nabla w(y)\ra\le w^{*}(\nabla w(y)) + w(y).\]
On the other hand, for any $y'\in \X_{0}$, by convexity of $w$,
\[w(y')\ge w(y) + \la \nabla w(y), y' - y\ra\Longrightarrow \la y', \nabla w(y)\ra - w(y') + w(y)\le \la y, \nabla w(y)\ra.\]
Taking the supremum over $y'$ we obtain that
\[w^{*}(\nabla w(y)) + w(y)\le \la y, \nabla w(y)\ra.\]
Putting two pieces together, we prove \eqref{eq:bdfy1}. The proof is then completed.
\end{proof}

The third lemma generalizes the co-coercive property of smooth convex functions. 
\begin{lemma}\label{lem:general_cocoercive}[Generalized co-coercive property]
  Let $h$ and $H$ be arbitrary convex functions on $\X_{0}$ such that
\[\BD{h}(x, y)\le L H(x - y)\] 
for any $x, y\in \X_{0}$. Then 
\[H^{*}\lb\frac{\nabla h(y) - \nabla h(x)}{L}\rb\le \frac{1}{L}\BD{h}(x, y).\]
\end{lemma}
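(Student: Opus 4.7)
The plan is to mimic the classical argument that $L$-smoothness and convexity together imply co-coercivity, but now working with the general ``divergence-controlling'' function $H$ in place of $\tfrac{1}{2}\|\cdot\|^{2}$. The key idea is a one-point translation of $h$ that turns the target inequality into a statement about a minimizer.

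Concretely, fix $x, y$ and define $g(z) = h(z) - \la \nabla h(y), z\ra$. Since $h$ is convex, so is $g$, and $\nabla g(y) = 0$, so $y$ is a global minimizer of $g$. Moreover $\BD{g}(u, v) = \BD{h}(u, v)$ for all $u, v$, so the hypothesis $\BD{h}(u,v) \le L H(u-v)$ gives the ``$H$-smoothness'' inequality
\[
g(x + w) \le g(x) + \la \nabla g(x), w\ra + L H(w), \qquad \forall w \in \X_{0}.
\]
Using that $y$ minimizes $g$, we have $g(y) \le g(x+w)$ for every $w$, hence
\[
g(y) - g(x) \le \inf_{w \in \X_{0}} \lb \la \nabla g(x), w\ra + L H(w)\rb.
\]

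Next, the infimum on the right is exactly the negative of a convex conjugate:
\[
\inf_{w} \lb \la \nabla g(x), w\ra + L H(w)\rb = -\sup_{w} \lb \la -\nabla g(x), w\ra - L H(w)\rb = -(L H)^{*}(-\nabla g(x)).
\]
Using the scaling identity $(L H)^{*}(z) = L H^{*}(z / L)$ and the fact that $-\nabla g(x) = \nabla h(y) - \nabla h(x)$, this becomes $-L H^{*}\lb(\nabla h(y) - \nabla h(x))/L\rb$. On the other hand, a direct calculation gives $g(y) - g(x) = h(y) - h(x) - \la \nabla h(y), y - x\ra = -\BD{h}(x, y)$. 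Substituting and rearranging yields the claim.

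There is no real obstacle; the only things to be careful about are (i) the sign bookkeeping — choosing to recenter at $y$ rather than $x$ is what produces $\nabla h(y) - \nabla h(x)$ in the correct order, and (ii) the scaling $(LH)^{*}(z) = L H^{*}(z/L)$, which is a one-line computation from the definition. The argument does not require $H$ to be a CHEF (no symmetry or sub-homogeneity is used); it only uses that $H$ is convex so that the conjugate and Fenchel duality make sense, which is why this lemma can serve as a building block for the subsequent CHEF-based analysis.
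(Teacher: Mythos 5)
Your proof is correct, and it is essentially the same argument as the paper's: the paper recenters via $h_y(\cdot) = \BD{h}(\cdot, y)$, observes $\BD{h_y} = \BD{h}$ and $h_y \ge 0$, and then takes a supremum over test points to produce the conjugate; your recentering $g = h - \la\nabla h(y), \cdot\ra$ differs from $h_y$ only by an additive constant, so the two derivations coincide step for step (with $w = x' - x$), and both reduce the claim to the classical ``minimizer plus smoothness upper bound plus Fenchel duality'' argument.
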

\begin{proof}
  Let $h_{y}(x) = \BD{h}(x, y)$. Fix any $x'\in \X_{0}$. Then
  \begin{align*}
    \lefteqn{\BD{h_{y}}(x', x) = h_{y}(x') - h_{y}(x) - \la \nabla h_{y}(x), x' - x\ra} \\
& = (h(x') - h(y) - \la \nabla h(y), x' - y\ra) - (h(x) - h(y) - \la \nabla h(y), x - y\ra) - \la\nabla h(x) - \nabla h(y), x' - x\ra\\
& = h(x') - h(x) - \la\nabla h(x), x' - x\ra = \BD{h}(x', x).
  \end{align*}
Since $h$ is convex,
\[h_{y}(x')\ge 0.\]
The above two inequalities imply that
\begin{align*}
  \BD{h}(x', x) &= h_{y}(x') - h_{y}(x) - \la \nabla h_{y}(x), x' - x\ra
 \ge -h_{y}(x) - \la \nabla h_{y}(x), x' - x\ra.
\end{align*}
Thus,
\begin{align*}
  \BD{h}(x, y) &= h_{y}(x) \ge -\la \nabla h_{y}(x), x' - x\ra - \BD{h}(x', x)\\
& = \la \nabla h(y) - \nabla h(x), x' - x\ra - \BD{h}(x', x)\\
& \ge \la\nabla h(y) - \nabla h(x), x' - x\ra - L H(x' - x)\\
& = L\lb \la\frac{\nabla h(y) - \nabla h(x)}{L}, x' - x\ra - H(x' - x)\rb
\end{align*}
Taking a supremum over $x'\in \X_{0}$, we obtain that
\[\BD{h}(x, y)\ge LH^{*}\lb\frac{\nabla h(y) - \nabla h(x)}{L}\rb\]
\end{proof}

The last lemma gives the relationship between the Bregman divergence and its dual divergence.
\begin{lemma}\label{lem:dual_BD}
  Let $h$ and $H$ be arbitrary convex functions on $\X$ and $\X_{0}$ such that
\[\BD{h}(x, y)\ge H(x - y).\] 
Assume further that $H$ is symmetric in the sense that $H(x) = H(-x)$. Then 
\[\BD{h^{*}}(x, y)\le (2\log 2) H^{*}(x - y)\le \frac{3}{2}H^{*}(x - y).\]
\end{lemma}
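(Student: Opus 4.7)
The strategy is to obtain a Fenchel-type dual representation of $\BD{h^*}(x,y)$ in which the hypothesis $\BD{h}(\cdot,\cdot) \ge H(\cdot - \cdot)$ can be plugged in directly and converted into an $H^*$-type upper bound.

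First, I would fix $x, y \in \X_0$ and choose $z \in \partial h^*(y)$ so that by Fenchel--Young equality $h^*(y) = \la z, y\ra - h(z)$ and $y \in \partial h(z)$. Expanding the definition of $\BD{h^*}(x,y)$ and substituting $h^*(x) = \sup_{w \in \X}\{\la x, w\ra - h(w)\}$, a direct calculation collapses the cross terms and yields the key identity
\[
\BD{h^*}(x, y) \;=\; \sup_{w \in \X}\bigl\{\la x - y,\, w - z\ra \;-\; \BD{h}(w, z)\bigr\}.
\]
This rewrites the object of interest as a Fenchel conjugate of $u \mapsto \BD{h}(z + u, z)$ evaluated at $x - y$.

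Next, invoking the hypothesis $\BD{h}(w, z) \ge H(w - z)$ and substituting $u = w - z$, the supremum above is dominated by $\sup_u\{\la x - y, u\ra - H(u)\}$, which by definition of the conjugate is at most $H^*(x - y)$. To sharpen this to the stated constant $2\log 2$, I would exploit the two remaining ingredients of the hypothesis: the convexity of $u \mapsto \BD{h}(z + u, z)$ at zero (where it vanishes) and the symmetry $H(u) = H(-u)$. Convexity along a ray gives $\BD{h}(z + u, z) \ge H(\lambda u)/\lambda$ for every $\lambda \in (0,1]$, and averaging the resulting inequality against the symmetric counterpart applied to $-(x - y)$ produces an upper estimate of the form $\inf_{\lambda \in (0,1)}\{\alpha(\lambda)\,H^*(x-y)\}$ for a specific elementary function $\alpha(\cdot)$; minimizing in $\lambda$ gives $2\log 2$, and the numerical check $2 \log 2 \le 3/2$ closes the lemma.

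The main obstacle I anticipate is handling the case when $h$ is not strictly convex, so that $\nabla h^*(y)$ is multi-valued: one must verify that the identity for $\BD{h^*}(x,y)$ holds for an arbitrary subgradient selection $z \in \partial h^*(y)$, and that passing from the sup over $w \in \X$ to the sup over $u \in \X_0$ preserves the correct direction of inequality (this is the one place the hypothesis $\X \subseteq \X_0$ with $\X_0$ a Hilbert space is used). A secondary subtlety is ensuring the $\lambda$-optimization yields a constant that is genuinely independent of the shape of $H$—this is where symmetry plays a crucial role, preventing directional cancellation from inflating the constant beyond $2\log 2$.
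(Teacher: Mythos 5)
Your core identity-based argument is correct, takes a genuinely different route from the paper, and in fact gives a \emph{strictly stronger} bound. The conjugacy identity
\[
\BD{h^*}(x,y) = \sup_{w}\bigl\{\la x - y,\, w - z\ra - \BD{h}(w, z)\bigr\}, \qquad z \in \partial h^*(y),
\]
is valid (it follows from Fenchel--Young equality $h^*(y) + h(z) = \la y, z\ra$ and $y \in \partial h(z)$), and substituting $\BD{h}(w, z) \ge H(w - z)$ with $u = w - z$ gives $\BD{h^*}(x, y) \le H^*(x - y)$, a bound with constant $1$. Since the hypothesis applied at $x = y$ forces $H(0) \le 0$, we have $H^*(v) \ge -H(0) \ge 0$, so $H^*(x-y) \le (2\log 2)H^*(x-y)$ and the lemma follows immediately. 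Your first step alone already proves the result, and does so without even using the symmetry of $H$.

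Where you go astray is in the direction of the constant: $2\log 2 \approx 1.386 > 1$, so there is nothing to ``sharpen'' --- your constant-$1$ bound is already tighter than the stated $2\log 2$. The subsequent discussion about exploiting convexity along a ray, averaging the symmetric counterpart, and optimizing over $\lambda$ is an attempt to degrade your estimate to match the paper's weaker constant, and should simply be dropped. The paper's proof takes a different, more roundabout route: it symmetrizes to get $2H(x-y) \le \la \nabla h(x) - \nabla h(y), x-y\ra$, applies Fenchel--Young with a free parameter $\alpha$, substitutes $\nabla h^*$ arguments, and then Taylor-expands $\BD{h^*}(x,y)$ as an integral and integrates, accumulating the factor $\int_0^1 \frac{2}{2-\theta}\,d\theta = 2\log 2$. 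That route genuinely needs the symmetry of $H$ and differentiability of $h^*$ along the segment; your conjugacy identity sidesteps both, works with an arbitrary subgradient selection $z \in \partial h^*(y)$, and is the cleaner argument.
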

\begin{proof}
  Interchanging $x$ and $y$, we obtain that
\[H(y - x)\le \BD{h}(y, x).\]
By symmetry of $H$,
\[2H(x - y) = H(x - y) + H(y - x)\le \BD{h}(x, y) + \BD{h}(y, x) = \la \nabla h(x) - \nabla h(y), x - y\ra.\]
By the Fenchel-Young inequality, for any $\alpha > 0$, 
\begin{align*}
  \la \nabla h(x) - \nabla h(y), x - y\ra&\le \alpha H(x - y) + (\alpha H)^{*}\lb \nabla h(x) - \nabla h(y)\rb\\
& = \alpha \lb H(x - y) + H^{*}\lb \frac{\nabla h(x) - \nabla h(y)}{\alpha}\rb\rb.
\end{align*}
Thus, 
\[2H(x - y)\le \alpha \lb H(x - y) + H^{*}\lb \frac{\nabla h(x) - \nabla h(y)}{\alpha}\rb\rb,\]
which implies that 
\[H(x - y)\le \frac{\alpha}{2 - \alpha}H^{*}\lb\frac{\nabla h(x) - \nabla h(y)}{\alpha}\rb.\]
Replacing $x$ and $y$ by $\nabla h^{*}(x)$ and $\nabla h^{*}(y)$, respectively, and noting that $\nabla h(\nabla h^{*})(x) = x$, we obtain that
\begin{equation}\label{eq:dual_BD1}
  H(\nabla h^{*}(x) - \nabla h^{*}(y))\le \frac{\alpha}{2 - \alpha}H^{*}\lb\frac{x - y}{\alpha}\rb.
\end{equation}
By Taylor's expansion,
\begin{align*}
  \BD{h^{*}}(x, y) &= \int_{0}^{1}\frac{d}{d\theta}\BD{h^{*}}(y + \theta(x - y), y)d\theta = \int_{0}^{1}\la \nabla h^{*}(y + \theta(x - y)) - \nabla h^{*}(y), x - y\ra d\theta\\
& \stackrel{(i)}{\le} \int_{0}^{1} (H^{*}(x - y) + H(\nabla h^{*}(y + \theta(x - y)) - \nabla h^{*}(y)))d\theta\\
& \stackrel{(ii)}{\le}\int_{0}^{1} \lb H^{*}(x - y) + \frac{\theta}{2 - \theta}H^{*}\lb \frac{\theta(x - y)}{\theta}\rb\rb d\theta\\
& = H^{*}(x - y)\int_{0}^{1}\lb 1 + \frac{\theta}{2 - \theta}\rb d\theta\\
& = H^{*}(x - y)\int_{0}^{1}\frac{2}{2 - \theta}d\theta = (2\log 2) H^{*}(x - y).
\end{align*}
where \emph{(i)} uses the Fenchel-Young inequality and \emph{(ii)} uses \eqref{eq:dual_BD1}.
\end{proof}

\subsection{Lemmas}

We prove six lemmas for the one-epoch analysis. The first lemma connects the consecutive iterates within an epoch.
\begin{lemma}\label{lem:iter}
 For any $u\in \X$, 
\[\la \nuj_{k}, \xj_{k} - u\ra + \psi(\xj_{k}) - \psi(u)\le \frac{1}{\etaj}\lb \BD{w}(u, \xj_{k}) - \BD{w}(u, \xj_{k+1})\rb + \Mj_{k}\]
where 
\begin{equation}\label{eq:Mjk}
  \Mj_{k} = \la \nuj_{k}, \xj_{k} - \xj_{k+1}\ra - \frac{1}{\etaj}\BD{w}(\xj_{k+1}, \xj_{k}) + \psi(\xj_{k}) - \psi(\xj_{k+1}).
\end{equation}
\end{lemma}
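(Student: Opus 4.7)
The statement is the standard ``prox-inequality'' (or three-point lemma) adapted to the mirror-proximal step on line~9 of Algorithm~\ref{algo:mirrorproximalSCSGplus}, so the proof will be short and entirely deterministic. My plan is to derive it from the first-order optimality condition of the mirror-proximal update combined with the three-point identity for Bregman divergences.

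First, since $\xj_{k+1}$ minimizes $y\mapsto \la \nuj_{k}, y\ra + \psi(y) + \frac{1}{\etaj}\BD{w}(y, \xj_{k})$ over $\mathcal{X}$, there exists a subgradient $\zeta \in \partial\psi(\xj_{k+1})$ such that, for every $u\in\mathcal{X}$,
\[
\Bigl\langle \nuj_{k} + \zeta + \tfrac{1}{\etaj}\bigl(\nabla w(\xj_{k+1}) - \nabla w(\xj_{k})\bigr),\ u - \xj_{k+1}\Bigr\rangle \ge 0.
\]
Rearranging and using the convexity bound $\la \zeta, u - \xj_{k+1}\ra \le \psi(u) - \psi(\xj_{k+1})$ yields
\[
\la \nuj_{k}, \xj_{k+1} - u\ra + \psi(\xj_{k+1}) - \psi(u) \le \tfrac{1}{\etaj}\la \nabla w(\xj_{k+1}) - \nabla w(\xj_{k}),\, u - \xj_{k+1}\ra.
\]

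Second, I invoke the classical three-point identity for a Bregman divergence,
\[
\la \nabla w(\xj_{k+1}) - \nabla w(\xj_{k}),\, u - \xj_{k+1}\ra
= \BD{w}(u, \xj_{k}) - \BD{w}(u, \xj_{k+1}) - \BD{w}(\xj_{k+1}, \xj_{k}),
\]
which is a direct expansion of the three definitions of $\BD{w}$ and requires no assumption on $w$ beyond differentiable convexity. Substituting into the previous display gives the lemma but with $\xj_{k+1}$ on the left-hand side instead of $\xj_{k}$.

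Finally, I convert the left-hand side by adding and subtracting the obvious identities
\[
\la \nuj_{k}, \xj_{k+1} - u\ra = \la \nuj_{k}, \xj_{k} - u\ra - \la \nuj_{k}, \xj_{k} - \xj_{k+1}\ra,\qquad
\psi(\xj_{k+1}) - \psi(u) = \psi(\xj_{k}) - \psi(u) - \bigl(\psi(\xj_{k}) - \psi(\xj_{k+1})\bigr),
\]
which moves the extra terms to the right-hand side where they collect exactly into $\Mj_{k}$ as defined in~\eqref{eq:Mjk}. Each step is a one-line manipulation, so there is no real obstacle; the only minor subtlety is handling the subgradient of $\psi$ rigorously when $\xj_{k+1}$ lies on the boundary of $\mathcal{X}$, which is standard (the normal cone of $\mathcal{X}$ gets absorbed into the optimality inequality above).
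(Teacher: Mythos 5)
Your proof is correct and follows essentially the same route as the paper's: first-order optimality of the mirror-proximal step, the three-point Bregman identity, and a rearrangement of terms to isolate $\Mj_{k}$. The only difference is cosmetic — you phrase the optimality condition as a variational inequality over $\mathcal{X}$ rather than the exact stationarity equation $\nuj_{k} + \tfrac{1}{\etaj}(\nabla w(\xj_{k+1}) - \nabla w(\xj_{k})) + \xi = 0$ used in the paper, which is in fact a slightly more careful treatment of the constrained domain (as you note, the normal cone term is absorbed and only makes the inequality stronger).
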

\begin{proof}
  By definition (line 9 of Algorithm \ref{algo:mirrorproximalSCSGplus}), there exists $\xi\in \partial \psi(\xj_{k+1})$ such that
\[\nuj_{k} + \frac{1}{\etaj}(\nabla w(\xj_{k+1}) - \nabla w(\xj_{k})) + \xi = 0.\]
By convexity of $\psi$, 
\begin{align*}
  &\la \nuj_{k} + \frac{1}{\etaj}(\nabla w(\xj_{k+1}) - \nabla w(\xj_{k})), u - \xj_{k+1}\ra = \la\xi, \xj_{k+1} - u\ra\ge \psi(\xj_{k + 1}) - \psi(u)\\
\Longrightarrow & \la \nuj_{k}, \xj_{k+1} - u\ra + \psi(\xj_{k+1}) - \psi(u) \le -\frac{1}{\etaj} \la\nabla w(\xj_{k}) - \nabla w(\xj_{k+1}), u - \xj_{k + 1}\ra.
\end{align*}
It is easy to verify that
\[\la\nabla w(\xj_{k}) - \nabla w(\xj_{k+1}), u - \xj_{k + 1}\ra = \BD{w}(u, \xj_{k+1}) + \BD{w}(\xj_{k+1}, \xj_{k}) - \BD{w}(u, \xj_{k}).\]
The proof is completed by rearranging the terms.
\end{proof}

The second lemma provides a bound for $\Mj_{k}$ as an analogue of Lemma \ref{lem:nuknorm} for the unregularized case, though the proof is much more involved.
\begin{lemma}\label{lem:Mk}
Assume that $\etaj L \le \frac{1}{3}$. Under assumptions \textbf{B}1 and \textbf{B}2,
  \[\E_{\sI_{k}}\Mj_{k}\le 3C_{G}g^{-1}\lb\frac{3\etaj L}{\bj}\rb\BD{f}(\xj_{0}, \xj_{k}) + F(\xj_{k}) - \E_{\sI_{k}}F(\xj_{k+1}) + \E_{\sI_{k}}\la \ej, \xj_{k} - \xj_{k + 1}\ra.\]
\end{lemma}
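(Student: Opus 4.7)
The plan is to decompose $\la \nuj_k, \xj_k - \xj_{k+1}\ra$ around $\E_{\sI_k}\nuj_k = \nabla f(\xj_k) + \ej$ and treat three pieces separately: the cross term $\la\ej,\xj_k-\xj_{k+1}\ra$ is kept verbatim on the right-hand side; the deterministic piece $\la\nabla f(\xj_k),\xj_k-\xj_{k+1}\ra$ is controlled by smoothness; and the centred noise $\la\nuj_k-\E_{\sI_k}\nuj_k,\xj_k-\xj_{k+1}\ra$ drives the $g^{-1}$ estimate. For the smoothness piece, assumptions \textbf{B}1 and \textbf{B}2 combine to yield $\BD{f}(x,y)\le L\,G(x-y)\le L\,\BD{w}(x,y)$, and the descent inequality gives $\la\nabla f(\xj_k),\xj_k-\xj_{k+1}\ra\le f(\xj_k)-f(\xj_{k+1})+L\BD{w}(\xj_{k+1},\xj_k)$. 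Paired with the $\psi(\xj_k)-\psi(\xj_{k+1})$ already present in $\Mj_k$, this produces the target $F(\xj_k)-\E_{\sI_k}F(\xj_{k+1})$ plus a residual $L\BD{w}(\xj_{k+1},\xj_k)$.

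For the noise piece, I invoke the generalized Fenchel--Young inequality (Lemma \ref{lem:bdfy}) with $y=\xj_k$, $z=\xj_{k+1}-\xj_k$, $u=-(\nuj_k-\E_{\sI_k}\nuj_k)$, and $\alpha=1/\etaj-L$; positivity of $\alpha$, in fact $1/\alpha\le 3\etaj/2$, comes from $\etaj L\le 1/3$. By design, the coefficients of $\BD{w}(\xj_{k+1},\xj_k)$ accumulated from the smoothness residue, the $\alpha$ from Fenchel--Young, and the $-1/\etaj$ inside $\Mj_k$ sum to exactly $L+\alpha-1/\etaj=0$, so the primal Bregman term cancels cleanly. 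The dual Bregman residue is bounded by Lemma \ref{lem:dual_BD} (valid because $\BD{w}\ge G$ and $G$ is symmetric by \textbf{C}1), and a single application of Lemma \ref{lem:CHEF} absorbs the $1/\alpha$ scaling, leaving at most $\tfrac{3\alpha}{2}\,G^*\bigl((\nuj_k-\E_{\sI_k}\nuj_k)/\alpha\bigr)$.

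It remains to bound $\E_{\sI_k}G^*\bigl((\nuj_k-\E_{\sI_k}\nuj_k)/\alpha\bigr)$. Write the centred vector as $\tfrac{1}{\alpha\bj}\sum_\ell Z_{i_\ell}$ with $Z_i=(\nabla f_i(\xj_k)-\nabla f_i(\xj_0))-(\nabla f(\xj_k)-\nabla f(\xj_0))$ i.i.d.\ mean zero; then property \textbf{C}3 gives $\E G^*\bigl(\sum_\ell Z_{i_\ell}/(\alpha\bj)\bigr)\le C_G\bj\,\E_i G^*\bigl(Z_i/(\alpha\bj)\bigr)$. A symmetrization using an independent copy $i'$ combined with Jensen and the symmetry of $G^*$ upgrades this to $\E_i G^*\bigl(2(\nabla f_i(\xj_k)-\nabla f_i(\xj_0))/(\alpha\bj)\bigr)$. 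Cocoercivity (Lemma \ref{lem:general_cocoercive}, applied with $h=f_i$ and $H=G$) provides $G^*\bigl((\nabla f_i(\xj_k)-\nabla f_i(\xj_0))/L\bigr)\le \BD{f_i}(\xj_0,\xj_k)/L$; rescaling by $2L/(\alpha\bj)$ through a single application of Lemma \ref{lem:CHEF} then yields $G^*\bigl(2W_i/(\alpha\bj)\bigr)\le \tfrac{2}{\alpha\bj}g^{-1}(2L/(\alpha\bj))\BD{f_i}(\xj_0,\xj_k)$. Averaging over uniform $i$ turns $\BD{f_i}$ into $\BD{f}$; the constants telescope to $3C_G\,g^{-1}(2L/(\alpha\bj))\,\BD{f}(\xj_0,\xj_k)$; and $1/\alpha\le 3\etaj/2$ together with the monotonicity of $g^{-1}$ delivers the claimed $3C_G\,g^{-1}(3\etaj L/\bj)\,\BD{f}(\xj_0,\xj_k)$.

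The main obstacle lies in the last paragraph. Two choices must be made with care: picking $\alpha=1/\etaj-L$ so that the Bregman cancellation is exact, and routing all three scalings ($1/\alpha$ from Fenchel--Young, $1/\bj$ from Nemirovsky, and the cocoercivity factor $L$) through a \emph{single} invocation of Lemma \ref{lem:CHEF}. The latter is essential because, as follows from property \textbf{C}2, $g^{-1}$ is sub-multiplicative, so separating the scalings into two applications of Lemma \ref{lem:CHEF} would produce a product of $g^{-1}$'s strictly larger than $g^{-1}(3\etaj L/\bj)$, breaking the bound.
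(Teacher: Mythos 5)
Your proof is correct and follows the same skeleton as the paper's: decompose $\Mj_k$ around $\E_{\sI_k}\nuj_k = \nabla f(\xj_k) + \ej$, cancel the primal Bregman terms, route the centred noise through Lemma~\ref{lem:bdfy}, Lemma~\ref{lem:dual_BD}, Hoeffding, property~\textbf{C}3, a single invocation of Lemma~\ref{lem:CHEF}, and Lemma~\ref{lem:general_cocoercive}. The two places you diverge are minor technical choices that lead to the same constant: \emph{(i)} the paper splits the $\frac{1}{\etaj}\BD{w}(\xj_{k+1},\xj_k)$ into $\frac{2}{3\etaj}+\frac{1}{3\etaj}$ and uses $\etaj L\le 1/3$ to dominate the smoothness residual, whereas you pick $\alpha=1/\etaj-L$ so the cancellation is exact (and then use $\etaj L\le 1/3$ only to bound $1/\alpha\le 3\etaj/2$); \emph{(ii)} for the centred vector $Z_i=W_i-\E_{i'}W_{i'}$ the paper directly writes $G^*(a-b)\le\tfrac12 G^*(2a)+\tfrac12 G^*(2b)$ and then applies cocoercivity separately to each $f_i$ and to $f$ (the latter valid since $\BD{f}=\tfrac1n\sum\BD{f_i}\le LG$), while you symmetrize with an independent copy $i'$ and apply Jensen to arrive at $\E_i G^*(2\lambda W_i)$, invoking cocoercivity only for the individual $f_i$'s. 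Both are valid; the paper's version avoids the extra averaging over $i'$, while yours sidesteps the need to note that $f$ itself inherits the $L$-smoothness bound. Your concluding observation that all three rescalings must be absorbed by one application of Lemma~\ref{lem:CHEF} — rather than chaining $g^{-1}$'s, which would degrade the constant — is exactly the point that makes the paper's bookkeeping work and is worth emphasizing.
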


\begin{proof}
  We decompose $\Mj_{k}$ into three components:
\[\Mj_{k} = \Mj_{k1} + \Mj_{k2} + \Mj_{k3},\]
where 
\begin{align*}
  &\Mj_{k1} = \la \nuj_{k} - \nabla f(\xj_{k}) - \ej, \xj_{k} - \xj_{k + 1}\ra - \frac{2}{3\etaj}\BD{w}(\xj_{k+1}, \xj_{k}),\\
  & \Mj_{k2} = \la \nabla f(\xj_{k}), \xj_{k} - \xj_{k+1}\ra + \psi(\xj_{k}) - \psi(\xj_{k+1}) - \frac{1}{3\etaj}\BD{w}(\xj_{k+1}, \xj_{k}),\\
  & \Mj_{k3} = \la \ej, \xj_{k} - \xj_{k+1}\ra.
\end{align*}
First we bound $\E_{\sI_{k}}\Mj_{k1}$. By Lemma \ref{lem:bdfy} with $u = -(\nuj_{k} - \nabla f(\xj_{k}) - \ej), z = \xj_{k+1} - \xj_{k}, y = \xj_{k}, \alpha = \frac{2}{3\etaj}$, 
\begin{align}
\Mj_{k1}&\le \frac{2}{3\etaj}\BD{w^{*}}\lb \frac{-3\etaj(\nuj_{k} - \nabla f(\xj_{k}) - \ej)}{2}+ \nabla w(\xj_{k}), \nabla w(\xj_{k})\rb\nonumber\\
& \stackrel{(i)}{\le} \frac{1}{\etaj}G^{*} \lb\frac{-3\etaj(\nuj_{k} - \nabla f(\xj_{k}) - \ej)}{2}\rb\label{eq:Mk11}
\end{align}
where \emph{(i)} uses Lemma \ref{lem:dual_BD} and assumption \textbf{B}1. By definition of $\nuj_{k}$,
\[\nuj_{k} - \nabla f(\xj_{k}) - \ej = \frac{1}{\bj}\sum_{i\in \sI_{k}}\lb \nabla f_{i}(\xj_{k}) - \nabla f_{i}(\xj_{0}) - (\nabla f(\xj_{k}) - \nabla f(\xj_{0}))\rb.\]
Let $Z_{1}, \ldots, Z_{b_{j}}$ be i.i.d. random elements on $\X_{0}$ such that 
\[Z_{j}\stackrel{d}{=} -\frac{3\etaj}{2\bj}\lb\nabla f_{\tdi}(\xj_{k}) - \nabla f_{\tdi}(\xj_{0}) - (\nabla f(\xj_{k}) - \nabla f(\xj_{0}))\rb\]
where $\tdi$ is a uniform random variable in $\{1, \ldots, n\}$ that is independent of $\xj_{k}$. Since $G^{*}$ is convex and $\E Z_{j} = 0$, by Hoeffding's lemma (Proposition \ref{prop:hoeffding}),
\begin{equation}\label{eq:Mk12}
\E_{\sI_{k}}G^{*}\lb\frac{-3\etaj(\nuj_{k} - \nabla f(\xj_{k}) - \ej)}{2}\rb\le \E G^{*}\lb\sum_{j=1}^{\bj}Z_{j}\rb.
\end{equation}
Then by the property \textbf{C}3 of a CHEF,
\begin{equation}\label{eq:Mk13}
\E G^{*}\lb\sum_{j=1}^{\bj}Z_{j}\rb\le C_{G}\sum_{j=1}^{\bj}\E G^{*}(Z_{j})\le C_{G}\bj\E G^{*}(Z_{1}).
\end{equation}
Then we have
\begin{align}
G^{*}(Z_{1}) &\stackrel{(i)}{\le} \frac{1}{2}\lb G^{*}\lb\frac{3\etaj}{\bj} (\nabla f_{\tdi}(\xj_{k}) - \nabla f_{\tdi}(\xj_{0}))\rb + G^{*}\lb\frac{3\etaj}{\bj} (\nabla f(\xj_{k}) - \nabla f(\xj_{0}))\rb\rb\nonumber\\
& \stackrel{(ii)}{\le} \frac{3\etaj L}{2\bj}g^{-1}\lb\frac{3\etaj L}{\bj}\rb\lb G^{*}\lb\frac{\nabla f_{\tdi}(\xj_{k}) - \nabla f_{\tdi}(\xj_{0})}{L}\rb + G^{*}\lb\frac{\nabla f(\xj_{k}) - \nabla f(\xj_{0})}{L}\rb\rb\nonumber\\
& \stackrel{(iii)}{\le} \frac{3\etaj}{2\bj}g^{-1}\lb\frac{3\etaj L}{\bj}\rb \lb \BD{f_{\tdi}}(\xj_{0}, \xj_{k}) + \BD{f}(\xj_{0}, \xj_{k})\rb.\nonumber
\end{align}
where \emph{(i)} uses the convexity and the symmetry of $G^{*}$, \emph{(ii)} is implied by Lemma \ref{lem:CHEF} and \emph{(iii)} is implied by Lemma \ref{lem:general_cocoercive}. Taking an expectation over $\tdi$, we obtain that
\begin{equation}
  \label{eq:Mk14}
  \E G^{*}(Z_{1})\le \frac{3\etaj}{\bj}g^{-1}\lb\frac{3\etaj L}{\bj}\rb \BD{f}(\xj_{0}, \xj_{k}).
\end{equation}
Putting \eqref{eq:Mk11}- \eqref{eq:Mk14} together, we conclude that
\begin{equation}
  \label{eq:Mk1}
  \E_{\sI_{k}}\Mj_{k1}\le 3C_{G}g^{-1}\lb\frac{3\etaj L}{\bj}\rb \BD{f}(\xj_{0}, \xj_{k}).
\end{equation}

~\\
\noindent To bound $\E_{\sI_{k}}\Mj_{k2}$, by assumption \textbf{B}2 and \textbf{B}1,
\begin{align*}
  \la \nabla f(\xj_{k}), \xj_{k} - \xj_{k+1}\ra &= f(\xj_{k}) - f(\xj_{k+1}) + \BD{f}(\xj_{k+1}, \xj_{k})\\
&\le f(\xj_{k}) - f(\xj_{k+1}) + LG(\xj_{k+1} - \xj_{k})\\
& \le f(\xj_{k}) - f(\xj_{k+1}) + L\BD{w}(\xj_{k+1}, \xj_{k}).
\end{align*}
Thus,
\begin{align}
  \Mj_{k2}\le F(\xj_{k}) - F(\xj_{k+1}) - \lb\frac{1}{3\etaj} - L\rb \BD{w}(\xj_{k+1}, \xj_{k})\le F(\xj_{k}) - F(\xj_{k+1})\label{eq:Mk2}.
\end{align}

~\\
\noindent The proof is completed by combining \eqref{eq:Mk1} and \eqref{eq:Mk2}.
\end{proof}

The third lemma controls the magnitude of certain transformations of $\ej$ that will be useful in the following analysis. Note that in the unregularized case where $G(x) = \|x\|_{2}^{2} / 2$, $G^{*}(\beta\ej) = \beta^{2}\|\ej\|_{2}^{2} / 2$. Thus Lemma \ref{lem:ej} is essentially an analogue of Lemma \ref{lem:ejL2}.
\begin{lemma}\label{lem:ej}
Under assumptions \textbf{B}1 and \textbf{B}2, for any $\beta > 0$, 
\[\frac{1}{\beta}\E G^{*}(\beta \ej)\le C_{G}g^{-1}\lb\frac{3\beta L }{\Bj}\rb \bigg\{2\E (F(\td{x}_{j-1}) - F(x^{*})) + \DH\bigg\} I(\Bj < n).\]
\end{lemma}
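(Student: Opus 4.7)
The plan is to follow the same pattern as the proof of Lemma \ref{lem:Mk}: Hoeffding's lemma, property \textbf{C}3, CHEF sub-homogeneity, and the generalized co-coercivity, adapted to express the error in terms of gradient differences at $x^{*}$. When $\Bj = n$ one has $\ej \equiv 0$ and $G^{*}(0) = 0$, so the inequality is trivial; hereafter assume $\Bj < n$. Write
\[
\beta \ej \;=\; \sum_{i \in \Ij} \frac{\beta}{\Bj}\lb \nabla f_{i}(\td{x}_{j-1}) - \nabla f(\td{x}_{j-1})\rb,
\]
a sum of mean-zero summands indexed by the random subset $\Ij$. Applying Hoeffding's lemma (Proposition \ref{prop:hoeffding}) exactly as in the proof of Lemma \ref{lem:Mk} dominates $\E_{\Ij}G^{*}(\beta \ej)$ (conditional on $\td{x}_{j-1}$) by the $\Bj$-fold i.i.d. analogue, after which property \textbf{C}3 yields
\[
\E_{\Ij} G^{*}(\beta \ej) \;\le\; C_{G}\Bj\, \E_{\tdi} G^{*}(Z), \qquad Z := \frac{\beta}{\Bj}\lb\nabla f_{\tdi}(\td{x}_{j-1}) - \nabla f(\td{x}_{j-1})\rb,
\]
with $\tdi$ uniform on $\{1,\ldots,n\}$ and independent of $\td{x}_{j-1}$.

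The key trick to land on the precise constant $3\beta L/\Bj$ inside $g^{-1}$ is a \emph{three-way} convex decomposition: introduce $x^{*}$ and write
\[
Z \;=\; \frac{1}{3}\left[\frac{3\beta}{\Bj}\lb\nabla f_{\tdi}(\td{x}_{j-1}) - \nabla f_{\tdi}(x^{*})\rb - \frac{3\beta}{\Bj}\lb\nabla f(\td{x}_{j-1}) - \nabla f(x^{*})\rb + \frac{3\beta}{\Bj}\lb\nabla f_{\tdi}(x^{*}) - \nabla f(x^{*})\rb\right].
\]
Convexity and symmetry of $G^{*}$ then bound $G^{*}(Z)$ by $\tfrac{1}{3}$ times the sum of $G^{*}$ applied to each bracketed vector. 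For each of the three terms I would invoke Lemma \ref{lem:CHEF} with $\lambda = 3\beta L/\Bj$ to pull out the factor $\tfrac{3\beta L}{\Bj} g^{-1}(3\beta L/\Bj)$, and then the generalized co-coercivity (Lemma \ref{lem:general_cocoercive}) applied with $h = f_{\tdi}$ and $h = f$ respectively, together with symmetry of $G^{*}$, gives
\[
G^{*}\lb\frac{\nabla f_{\tdi}(\td{x}_{j-1}) - \nabla f_{\tdi}(x^{*})}{L}\rb \le \frac{1}{L}\BD{f_{\tdi}}(\td{x}_{j-1}, x^{*}),
\]
and analogously for $f$. Taking the expectation over $\tdi$ converts the first two contributions into $\BD{f}(\td{x}_{j-1}, x^{*})/L$ each, while the third averages to $\DH/L^{2}$ by the very definition of $\DH$; after gathering terms one arrives at
\[
\E_{\tdi}G^{*}(Z) \;\le\; \frac{\beta}{\Bj}\,g^{-1}\!\lb\frac{3\beta L}{\Bj}\rb\bigl[2\BD{f}(\td{x}_{j-1}, x^{*}) + \DH\bigr].
\]

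The main obstacle, and the only place where composite regularization matters, is to pass from $\BD{f}(\td{x}_{j-1}, x^{*})$ to $F(\td{x}_{j-1}) - F(x^{*})$, since $\nabla f(x^{*})$ no longer vanishes. I would use the first-order optimality condition $-\nabla f(x^{*}) \in \partial \psi(x^{*})$, which by convexity of $\psi$ gives $-\la\nabla f(x^{*}), \td{x}_{j-1} - x^{*}\ra \le \psi(\td{x}_{j-1}) - \psi(x^{*})$, so that
\[
\BD{f}(\td{x}_{j-1}, x^{*}) \;=\; f(\td{x}_{j-1}) - f(x^{*}) - \la\nabla f(x^{*}), \td{x}_{j-1} - x^{*}\ra \;\le\; F(\td{x}_{j-1}) - F(x^{*}).
\]
Combining the chain, multiplying by $C_{G}\Bj$, the $\Bj$'s cancel and one factor of $\beta$ cancels with the $1/\beta$ prefactor on the left; taking the outer expectation over $\td{x}_{j-1}$ and reinstating the indicator $I(\Bj < n)$ from the trivial case produces exactly the claimed inequality.
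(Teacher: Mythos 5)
Your proof matches the paper's argument step for step: the trivial case $\Bj = n$, Hoeffding's lemma plus property \textbf{C}3 to reduce to a single $Z$, the three-way convex decomposition introducing $x^{*}$, Lemma \ref{lem:CHEF} with $\lambda = 3\beta L/\Bj$, Lemma \ref{lem:general_cocoercive}, and the optimality condition to pass from $\BD{f}(\td{x}_{j-1}, x^{*})$ to $F(\td{x}_{j-1}) - F(x^{*})$. The only blemish is the parenthetical remark that the third term ``averages to $\DH/L^{2}$'' (it is $\DH/L$), but this does not affect your final displayed bound, which is correct.
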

\begin{proof}
Note that 
\[\ej = \frac{1}{\Bj}\sum_{i\in \Ij}(\nabla f_{i}(\td{x}_{j}) - \nabla f(\td{x}_{j})).\]
Clearly $\ej = 0$ when $\Bj = n$, and then 
\[\frac{1}{\beta}G^{*}(\beta \ej) = \frac{1}{\beta}G^{*}(0) \le 0.\]
Throughout the rest of the proof we assume that $\Bj < n$. Similar to the proof of Lemma \ref{lem:Mk}, let $Z_{1}, \ldots, Z_{\Bj}$ be i.i.d. random elements in $\X_{0}$ such that
\[Z_{j}\stackrel{d}{=}\frac{\beta}{\Bj}(\nabla f_{\tdi}(\td{x}_{j}) - \nabla f(\td{x}_{j})),\]
where $\tdi$ is a uniform random variable from $\{1, \ldots, n\}$. Then $\E Z_{j} = 0$ and hence by Hoeffding's lemma (Proposition \ref{prop:hoeffding}), 
  \begin{align}
    \E G^{*}(\beta \ej) &\le \E G^{*}\lb \sum_{j=1}^{\Bj}Z_{j}\rb.\label{eq:ej1}
  \end{align}
By the property \textbf{C}3 of a CHEF,
\begin{equation*}
\E G^{*}\lb \sum_{j=1}^{\Bj}Z_{j}\rb\le C_{G}\sum_{j=1}^{\Bj}\E G^{*}(Z_{j}) = C_{G}\Bj \E G^{*}(Z_{1}).
\end{equation*}
By definition,
\begin{align}
  \lefteqn{\E G^{*}(Z_{1})}\nonumber\\
  &= \E G^{*}\lb \frac{\beta}{\Bj}(\nabla f_{\tdi}(\td{x}_{j-1}) - \nabla f_{\tdi}(x^{*})) - \frac{\beta}{\Bj}(\nabla f(\td{x}_{j-1}) - \nabla f(x^{*})) + \frac{\beta}{\Bj}(\nabla f_{\tdi}(x^{*}) - \nabla f(x^{*}))\rb\nonumber\\
& \stackrel{(i)}{\le} \frac{1}{3}\bigg\{ \E G^{*}\lb\frac{3\beta}{\Bj} (\nabla f_{\tdi}(x^{*}) - \nabla f_{\tdi}(\td{x}_{j-1}))\rb + \E G^{*}\lb\frac{3\beta}{\Bj} (\nabla f(x^{*}) - \nabla f(\td{x}_{j-1}))\rb\nonumber\\
& \quad + \E G^{*}\lb\frac{3\beta}{\Bj} (\nabla f_{\tdi}(x^{*}) - \nabla f(x^{*}))\rb\bigg\}\nonumber\\
& \stackrel{(ii)}{\le} \frac{\beta L}{\Bj}g^{-1}\lb\frac{3\beta L }{\Bj}\rb \bigg\{ \E G^{*}\lb\frac{\nabla f_{\tdi}(x^{*}) - \nabla f_{\tdi}(\td{x}_{j-1})}{L}\rb + \E G^{*}\lb\frac{\nabla f(x^{*}) - \nabla f(\td{x}_{j-1})}{L}\rb\nonumber\\
& \qquad\qquad\qquad\quad + \E G^{*}\lb\frac{\nabla f_{\tdi}(x^{*}) - \nabla f(x^{*})}{L}\rb\bigg\}\nonumber\\
& \stackrel{(iii)}{\le} \frac{\beta}{\Bj}g^{-1}\lb\frac{3\beta L }{\Bj}\rb \bigg\{ \E \BD{f_{\tdi}}(\td{x}_{j-1}, x^{*}) + \E \BD{f}(\td{x}_{j-1}, x^{*}) + L\E G^{*}\lb\frac{\nabla f_{\tdi}(x^{*}) - \nabla f(x^{*})}{L}\rb\bigg\}\nonumber\\
& \stackrel{(iv)}{=}  \frac{\beta}{\Bj}g^{-1}\lb\frac{3\beta L }{\Bj}\rb \bigg\{2\E \BD{f}(\td{x}_{j-1}, x^{*}) + \DH\bigg\},\nonumber
\end{align}
where \emph{(i)} uses the convexity and the symmetry of $G^{*}$, \emph{(ii)} uses Lemma \ref{lem:CHEF} and \emph{(iii)} uses Lemma \ref{lem:general_cocoercive}. Finally, note that there exists $\xi\in \partial \psi(x^{*})$ such that
\[\nabla f(x^{*}) + \xi = 0.\]
By convexity of $\psi$, $\la \xi, \td{x}_{j-1} - x^{*}\ra\le \psi(\td{x}_{j-1}) - \psi(x^{*})$. Thus, 
\begin{align}
  \lefteqn{\BD{f}(\td{x}_{j - 1}, x^{*}) = f(\td{x}_{j-1}) - f(x^{*}) - \la\nabla f(x^{*}), \td{x}_{j-1} - x^{*}\ra} \nonumber\\
 & = f(\td{x}_{j-1}) - f(x^{*}) + \la \xi, \td{x}_{j-1} - x^{*}\ra\le F(\td{x}_{j-1}) - F(x^{*}).\label{eq:ej4}
\end{align}
The proof is then completed by combining \eqref{eq:ej1}-\eqref{eq:ej4}.
\end{proof}

The fourth lemma is an analogue of Lemma \ref{lem:geom_finite_L2} which enables the geometrization in the following proofs. Again, the proof is relegated to the end of this subsection to avoid distraction.
\begin{lemma}\label{lem:geom_finite}
  Assume that
  \begin{equation}
    \label{eq:etajL_cond}
    6\etaj L\le \min\left\{1, \bj g\lb \frac{\Gamma_{j}}{3C_{G}}\rb\right\}.
  \end{equation}
Then for any $j$, 
\[\E \BD{w}(x^{*}, \td{x}_{j}) < \infty, \quad \E (F(\td{x}_{j}) - F(x^{*})) < \infty, \quad \E |\la\ej, \td{x}_{j} - x^{*}\ra| < \infty.\]
\end{lemma}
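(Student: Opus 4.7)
The plan is to prove the three finiteness claims by strong induction on the epoch index $j$, paralleling the proof of Lemma \ref{lem:geom_finite_L2}. The base case $j=0$ is immediate since $\td{x}_0$ is a fixed initializer with $\BD{w}(x^*, \td{x}_0) < \infty$. For the inductive step, I would condition on $(\td{x}_{j-1}, \I_j)$, which fixes $\xj_0 = \td{x}_{j-1}$ and $\ej$; all remaining randomness at epoch $j$ is the sequence $\sI_0, \sI_1, \ldots$ together with the geometric stopping time $\Nj$. Since $\Nj \sim \mathrm{Geom}(\mj/(\mj+\bj))$ has finite moments of every polynomial order, it suffices to show that $\E[\BD{w}(x^*, \xj_k) \mid \xj_0, \I_j]$ grows at most polynomially in $k$; the three claims for $\td{x}_j = \xj_{\Nj}$ then follow by taking outer expectations, using the inductive hypothesis at level $j-1$ and Lemma \ref{lem:ej} to handle the $\ej$-dependence.

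To obtain the polynomial-in-$k$ growth, I would combine Lemma \ref{lem:iter} (with $u = x^*$), Lemma \ref{lem:Mk}, and convexity of $F$; dropping the non-negative piece $\etaj(F(\xj_{k+1}) - F(x^*))$ from the left yields the one-step inequality
\[
\E_{\sI_k}V_{k+1} \le V_k + \etaj|\la\ej, \xj_k - x^*\ra| + 3\etaj C_G\, g^{-1}\!\lb\frac{3\etaj L}{\bj}\rb \BD{f}(\xj_0, \xj_k) + \etaj\E_{\sI_k}|\la\ej, \xj_k - \xj_{k+1}\ra|,
\]
with $V_k = \BD{w}(x^*, \xj_k)$. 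The displacement term $\E_{\sI_k}|\la\ej, \xj_k - \xj_{k+1}\ra|$ is dispatched by Lemma \ref{lem:bdfy} combined with Lemma \ref{lem:dual_BD}, which absorbs a piece of order $\BD{w}(\xj_{k+1}, \xj_k)$ against the left side and leaves a residual of order $G^*(\etaj \ej)$ that is constant in $k$. For the remaining two pieces I would expand around $\xj_0$: write $\la\ej, \xj_k - x^*\ra = \la\ej, \xj_k - \xj_0\ra + \la\ej, \xj_0 - x^*\ra$ and use the three-point identity
\[
\BD{f}(\xj_0, \xj_k) = \BD{f}(\xj_0, x^*) + \BD{f}(x^*, \xj_k) + \la\nabla f(x^*) - \nabla f(\xj_k), \xj_0 - x^*\ra.
\]
The pieces indexed solely at $\xj_0$ or $x^*$ are constants in $k$ given the conditioning; the remaining $\BD{f}(x^*, \xj_k)$, $\la\ej, \xj_k - \xj_0\ra$, and $\la\nabla f(\xj_k), \xj_0 - x^*\ra$ are handled by Young's inequality on the CHEF pair together with Lemma \ref{lem:CHEF}, producing a constant plus a small multiple of $F(\xj_k) - F(x^*)$ and of $G(\xj_k - \xj_0) \le \BD{w}(\xj_k, \xj_0)$.

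The stepsize assumption $6\etaj L \le \min\{1, \bj g(\Gamma_j / 3C_G)\}$ is then exactly what guarantees that, after these absorptions, the multiplier of $F(\xj_k) - F(x^*)$ is dominated by the $\etaj(F(\xj_{k+1}) - F(x^*))$ dropped from the left (up to a one-step index shift that telescopes when the recursion is iterated), while the multiplier of $\BD{w}(\xj_k, \xj_0)$ stays bounded. The net recursion takes the form $\E V_{k+1} \le \E V_k + C_j$ conditional on $(\xj_0, \I_j)$, with $C_j$ depending only on $\xj_0$, $\ej$, and $G^*(\etaj \ej)$. This yields the linear bound $\E V_k \le V_0 + k C_j$, so $\E[V_{\Nj} \mid \xj_0, \I_j]$ is finite; the outer expectation is finite by the inductive hypothesis on $\td{x}_{j-1}$ and by Lemma \ref{lem:ej}, proving the first claim. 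The bound on $\E(F(\td{x}_j) - F(x^*))$ then follows from assumption \textbf{B}2 (via $F(\xj_k) - F(x^*) \le L G(\xj_k - x^*) + \psi(\xj_k) - \psi(x^*)$) and a second pass of the recursion, and $\E|\la\ej, \td{x}_j - x^*\ra|$ is handled by Young's inequality together with Lemma \ref{lem:ej}.

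The chief obstacle is the bookkeeping in the second paragraph: because the general Bregman divergence is not symmetric, a bound on $V_k = \BD{w}(x^*, \xj_k)$ does not directly control $\BD{w}(\xj_k, x^*)$ or $\BD{w}(\xj_k, \xj_0)$, so every object must be expanded around the fixed $\xj_0$ and the symmetry $G(w) = G(-w)$ must be exploited whenever moving between the primal and dual CHEF. If the direct linear-growth recursion proves too fragile, a safer fallback is the hypothetical-sequence argument from the proof of Lemma \ref{lem:geom_finite_L2}: construct the analogue of $\yj_k$ that uses the biased $\nabla f_{\I_j}$ estimator with zero within-epoch variance, show a mirror-proximal non-expansiveness (a direct consequence of Lemma \ref{lem:bdfy}) giving $\BD{w}(\xj_k, \yj_k) \lesssim \etaj k\, G^*(\ej)$, and reduce to a deterministic SVRG-type bound on $\BD{w}(\yj_k, x^*)$.
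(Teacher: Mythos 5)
The proposal has a genuine gap in the inductive step, and the gap lies exactly where you flag the ``chief obstacle.''

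The difficulty is that the three-point identity applied to $\BD{f}(\xj_0, \xj_k)$ produces $\BD{f}(x^*, \xj_k)$ with $\xj_k$ in the \emph{second} argument, which cannot be converted to $F(\xj_k)-F(x^*)$ via first-order optimality (that conversion needs $\BD{f}(\xj_k, x^*)$, cf.\ \eqref{eq:ej4}). The only available bound is $\BD{f}(x^*,\xj_k) \le LG(x^*-\xj_k)\le L\,\BD{w}(x^*,\xj_k)=LV_k$, which injects a multiplier of order $\etaj L \cdot 3C_G g^{-1}(3\etaj L/\bj)\lesssim \etaj L\,\Gamma_j$ on $V_k$. This quantity is bounded away from zero (it does not scale like $1/\mj$), so the recursion you actually obtain is geometric, $\E V_{k+1}\le (1+c)\E V_k + C_j$ with $c=\Theta(\Gamma_j)$, not the linear one you claim. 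Since $\Nj\sim\mathrm{Geom}(\mj/(\mj+\bj))$ and the configuration has $\mj\gg\bj/\Gamma_j$, the product $(1+c)\cdot\mj/(\mj+\bj)$ exceeds $1$ and the sum $\sum_k P(\Nj=k)\E V_k$ diverges; finiteness of $\E V_{\Nj}$ does not follow. The $\la\ej,\xj_k-\xj_0\ra$ piece causes the same trouble: any Young split lands you with $G(\xj_k-\xj_0)\le\BD{w}(\xj_k,\xj_0)$, whose only reduction to $V_k$ (via convexity and $G(2w)\le (2/g(1/2))G(w)$) again carries a multiplier that does not vanish with $\mj$.

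The paper's proof circumvents precisely this by never forming $\BD{f}(\xj_0,\xj_k)$. It re-derives the bound on $\tdMj_{k1}$ by re-centering the \emph{gradient differences} at $x^*$, writing $\nabla f_i(\xj_k)-\nabla f_i(\xj_0)=(\nabla f_i(\xj_k)-\nabla f_i(x^*))-(\nabla f_i(\xj_0)-\nabla f_i(x^*))$, and applying Lemma \ref{lem:general_cocoercive} to each piece. This produces $\BD{f}(\xj_k,x^*)$ and $\BD{f}(\xj_0,x^*)$ --- with $x^*$ in the correct position --- both of which convert to $F(\cdot)-F(x^*)$. It also keeps $F(\xj_{k+1})-F(x^*)$ on the left, building it into the Lyapunov function $\Qj_k=\frac{1}{\etaj}\E\BD{w}(x^*,\xj_k)+\E(F(\xj_k)-F(x^*))$, so the $F(\xj_k)-F(x^*)$ term on the right telescopes with unit coefficient. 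The only growth factor in the paper's recursion comes from Fenchel--Young on $\la\ej,x^*-\xj_k\ra$ with scaling $c_j=1/(2\mj)$, giving $\Qj_{k+1}\le(1+1/2\mj)\Qj_k+\text{const}$, and the paper verifies $\E(1+1/2\mj)^{\Nj}\le 2\mj+2<\infty$. Your fallback via a hypothetical $\yj_k$-sequence (mirroring Lemma \ref{lem:geom_finite_L2}) is not what the paper does and is also not carried out; the non-expansiveness of the composite mirror-proximal map is not a ``direct consequence of Lemma \ref{lem:bdfy}'' and would itself need a proof.
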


The fifth lemma connects the iterates $\td{x}_{j}$ and $\td{x}_{j-1}$ in adjacent epochs. As with Lemma \ref{lem:dual}, the proof exploits the elegant property of geometrization. 
\begin{lemma}\label{lem:iter_exp}
Assume that $\etaj$ satisfies \ref{eq:etajL_cond}. Let $u\in \R^{d}$ be any variable that is independent of $\Ij$ and subsequent random subsets $(\sI_{k})_{k\ge 1}$ within the $j$-th epoch. Then under assumptions \textbf{B}1 and \textbf{B}2,
\begin{align*}
  &\E\la \nabla f(\td{x}_{j}), \td{x}_{j} - u \ra + \E (\psi(\td{x}_{j}) - \psi(u)) \le \frac{\bj}{\etaj\mj}\E \lb \BD{w}(u, \td{x}_{j-1}) - \BD{w}(u, \td{x}_{j})\rb\\
  & + \lb 1 + \frac{\bj}{\mj}\rb\E \la \ej, \td{x}_{j-1} - \td{x}_{j}\ra + \frac{\bj}{\mj}\E \lb F(\td{x}_{j-1}) - F(\td{x}_{j})\rb + 3C_{G}g^{-1}\lb\frac{3\etaj L}{\bj}\rb\E \BD{f}(\td{x}_{j-1}, \td{x}_{j}).
\end{align*}
\end{lemma}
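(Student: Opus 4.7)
}

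The plan is to start from the one-step bound of Lemma \ref{lem:iter} applied with the test point $u$, combine it with the bound on $\E_{\sI_k}\Mj_k$ from Lemma \ref{lem:Mk}, and then convert the resulting per-step inequality into a per-epoch inequality via the geometrization lemma \ref{lem:geom}. Concretely, at step $k$ of the inner loop I would take $\E_{\sI_k}$ of the inequality in Lemma \ref{lem:iter}. Since $\I_{j}$ is already drawn and $\sI_{k}$ is independent of $\xj_{k}$, we have $\E_{\sI_k}\nuj_k=\nabla f(\xj_k)+\ej$, so the LHS becomes
\[
\la\nabla f(\xj_k),\xj_k-u\ra+\la \ej,\xj_k-u\ra+\psi(\xj_k)-\psi(u).
\]
Substituting the bound on $\E_{\sI_k}\Mj_k$ from Lemma \ref{lem:Mk} and taking a further unconditional expectation yields, for every $k\ge 0$,
\begin{align*}
&\E\la\nabla f(\xj_k),\xj_k-u\ra+\E(\psi(\xj_k)-\psi(u))+\E\la \ej,\xj_k-u\ra\\
&\le \tfrac{1}{\etaj}\E\bigl(\BD{w}(u,\xj_k)-\BD{w}(u,\xj_{k+1})\bigr)+3C_G\, g^{-1}\!\bigl(\tfrac{3\etaj L}{\bj}\bigr)\E\BD{f}(\xj_0,\xj_k)\\
&\quad+\E(F(\xj_k)-F(\xj_{k+1}))+\E\la \ej,\xj_k-\xj_{k+1}\ra .
\end{align*}

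Next I would set $k=\Nj\sim\mathrm{Geom}\bigl(\mj/(\mj+\bj)\bigr)$ and take $\E_{\Nj}$. By Lemma \ref{lem:geom}, with $1/\gamma-1=\bj/\mj$, each telescoping one-step increment $\E(D_{\Nj}-D_{\Nj+1})$ turns into $(\bj/\mj)(D_{0}-\E D_{\Nj})$. Applying this to the three sequences $D_k=\E\BD{w}(u,\xj_k)$, $D_k=\E F(\xj_k)$, and $D_k=\E\la \ej,\xj_k\ra$ (noting $\xj_0=\td{x}_{j-1}$ and $\xj_{\Nj}=\td{x}_{j}$) produces exactly the factors $\bj/\etaj\mj$, $\bj/\mj$, $\bj/\mj$ appearing on the RHS of the lemma, with $\BD{f}(\xj_0,\xj_{\Nj})=\BD{f}(\td{x}_{j-1},\td{x}_j)$.

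The last cosmetic step is to handle the stray $\E\la\ej,\td{x}_j-u\ra$ on the LHS. Since $u$ and $\xj_0=\td{x}_{j-1}$ are both independent of $\Ij$ and $\E_{\Ij}\ej=0$, we get $\E\la\ej,u\ra=\E\la\ej,\td{x}_{j-1}\ra=0$, hence
\[
\E\la\ej,\td{x}_j-u\ra=\E\la\ej,\td{x}_j-\td{x}_{j-1}\ra=-\E\la\ej,\td{x}_{j-1}-\td{x}_j\ra.
\]
Moving this term to the RHS combines with the $(\bj/\mj)\E\la\ej,\td{x}_{j-1}-\td{x}_j\ra$ produced by geometrization to give the stated coefficient $1+\bj/\mj$.

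The main technical obstacle is not the algebra but the legitimacy of geometrization: Lemma \ref{lem:geom} requires $\E|D_{\Nj}|<\infty$ for each of the three sequences above, and Fubini-type interchanges are needed to peel off $\E_{\Nj}$ from the inner-loop expectations. This is precisely what Lemma \ref{lem:geom_finite} (whose hypothesis \eqref{eq:etajL_cond} is assumed in the lemma statement) is designed to supply: it ensures $\E\BD{w}(x^{*},\td{x}_j)<\infty$, $\E(F(\td{x}_j)-F(x^{*}))<\infty$, and $\E|\la\ej,\td{x}_j-x^{*}\ra|<\infty$, from which the analogous finiteness with $x^{*}$ replaced by $u$ (or $\td{x}_{j-1}$) follows by the triangle/Cauchy--Schwarz estimates and the independence of $u$ from the epoch randomness. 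Once these integrability checks are in place, the three applications of Lemma \ref{lem:geom} are routine and the proof concludes immediately.
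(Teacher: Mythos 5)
Your proof matches the paper's argument essentially step for step: combine Lemma~\ref{lem:iter} with the bound on $\E_{\sI_k}\Mj_k$ from Lemma~\ref{lem:Mk}, use $\E_j\la\ej,u\ra=\E_j\la\ej,\xj_0\ra=0$ to rewrite $\la\ej,\xj_k-u\ra$ as $\la\ej,\xj_k-\xj_0\ra$, set $k=\Nj$, geometrize the three telescoping sequences $\BD{w}(u,\xj_k)$, $F(\xj_k)$, $\la\ej,\xj_k\ra$ via Lemma~\ref{lem:geom} (with integrability supplied by Lemma~\ref{lem:geom_finite}), and collect the two $\la\ej,\td{x}_{j-1}-\td{x}_j\ra$ contributions into the coefficient $1+\bj/\mj$. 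This is the same approach as the paper and it is correct.
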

\begin{proof}
  By Lemma \ref{lem:iter} and Lemma \ref{lem:Mk}, 
  \begin{align*}
  &\E_{\sI_{k}}\la \nuj_{k}, \xj_{k} - u\ra + \psi(\xj_{k}) - \psi(u) \le \frac{1}{\etaj}\E_{\sI_{k}}\lb \BD{w}(u, \xj_{k}) - \BD{w}(u, \xj_{k+1})\rb \\
& + 3C_{G}g^{-1}\lb\frac{3\etaj L}{\bj}\rb\BD{f}(\xj_{0}, \xj_{k}) + F(\xj_{k}) - \E_{\sI_{k}}F(\xj_{k+1}) + \E_{\sI_{k}}\la \ej, \xj_{k} - \xj_{k + 1}\ra.
  \end{align*}
 Note that
\[\E_{\sI_{k}} \nuj_{k} = \nabla f(\xj_{k}) + \ej.\]
Since $\xj_{k}$ and $u$ are independent of $\sI_{k}$, the first term on the left-hand side is 
\[\E\la \nuj_{k}, \xj_{k} - u\ra = \E\E_{\sI_{k}}\la \nuj_{k}, \xj_{k} - u\ra = \E\la\nabla f(\xj_{k}), \xj_{k} - u\ra + \E\la \ej, \xj_{k} - u\ra.\]
Letting $\E_{j}$ denote the expectation over $\Ij$ and $(\sI_{k})_{k\ge 0}$. Then
\[\E_{j}\la \ej, \xj_{k} - u\ra = \E_{j}\la \ej, \xj_{k} - \xj_{0}\ra + \E_{j}\la \ej, \xj_{0}\ra.\]
Note that $\Ij$ is independent of $\xj_{0}$, 
\[\E_{j}\la \ej, \xj_{0}\ra = \la \E_{j}\ej, \xj_{0}\ra = 0.\]
Thus, 
\begin{equation*}
  \E\la \ej, \xj_{k} - u\ra = \E \E_{j}\la \ej, \xj_{k} - u\ra = \E \la \ej, \xj_{k} - \xj_{0}\ra.
\end{equation*}
Letting $k = \Nj$, we obtain 
\begin{equation}
  \label{eq:iter_exp1}
  \E\la \nuj_{k}, \xj_{k} - u\ra = \E \la \nabla f(\td{x}_{j}), \td{x}_{j} - u\ra + \E \la\ej, \td{x}_{j} - \td{x}_{j-1}\ra.
\end{equation}

On the other hand, letting $k = \Nj$, by Lemma \ref{lem:geom} with $D_{k} = \BD{w}(u, \xj_{k})$ and Lemma \ref{lem:geom_finite},
\begin{align*}
  \E\lb \BD{w}(u, \xj_{\Nj}) - \BD{w}(u, \xj_{\Nj+1})\rb &= \frac{\bj}{\mj}\E \lb\BD{w}(u, \xj_{0}) - \BD{w}(u, \xj_{\Nj})\rb.
\end{align*}
Note that $\xj_{0} = \td{x}_{j-1}$, $\xj_{\Nj} = \td{x}_{j}$, we have
\begin{equation*}
\E\lb \BD{w}(u, \xj_{\Nj}) - \BD{w}(u, \xj_{\Nj+1})\rb = \frac{\bj}{\mj}\E \lb\BD{w}(u, \td{x}_{j-1}) - \BD{w}(u, \td{x}_{j})\rb.
\end{equation*}
Similarly, by Lemma \ref{lem:geom} and Lemma \ref{lem:geom_finite},
\begin{equation*}
\E \lb F(\xj_{\Nj}) - F(\xj_{\Nj + 1})\rb = \frac{\bj}{\mj}\E \lb F(\td{x}_{j-1}) - F(\td{x}_{j})\rb,
\end{equation*}
and 
\begin{equation}\label{eq:iter_exp4}
\E \la \ej, \xj_{\Nj} - \xj_{\Nj+1}\ra = \E \lb\la \ej, \xj_{\Nj}\ra - \la \ej, \xj_{\Nj + 1}\ra\rb = \frac{\bj}{\mj}\E \la \ej, \td{x}_{j-1} - \td{x}_{j}\ra.
\end{equation}
The proof is completed by putting \eqref{eq:iter_exp1}-\eqref{eq:iter_exp4} together. 
\end{proof}

The last lemma bounds the non-standard term $\E \la \ej, \td{x}_{j-1} - \td{x}_{j}\ra$ as in Lemma \ref{lem:xjxj-1L2}. Similarly, this lemma is a direct consequence of Lemma \ref{lem:iter_exp}.
\begin{lemma}\label{lem:xjxj-1}
Assume that $\etaj$ satisfies \ref{eq:etajL_cond}. Under assumptions \textbf{B}1 and \textbf{B}2, for any $\gamma_{j} > 0$,
\begin{align*}
  \E \la \ej, \td{x}_{j-1} - \td{x}_{j}\ra &\le \gamma_{j}\E \lb F(\td{x}_{j-1}) - F(\td{x}_{j})\rb + \frac{3\bj\gamma_{j}}{2\etaj (\mj + \bj)}G^{*}\lb \frac{(1 + \gamma_{j})\etaj(\mj + \bj)\ej}{\gamma_{j}\bj}\rb\\
& - \frac{\mj \gamma_{j}}{\mj + \bj}\lb 1 - 3C_{G}g^{-1}\lb\frac{3\etaj L}{\bj}\rb\rb \E \BD{f}(\td{x}_{j-1}, \td{x}_{j}).
\end{align*}
\end{lemma}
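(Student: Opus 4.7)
The plan is to mirror the Euclidean argument of Lemma~\ref{lem:xjxj-1L2} but replace the scalar Young inequality with the Bregman-Fenchel-Young inequality of Lemma~\ref{lem:bdfy} and the duality bound of Lemma~\ref{lem:dual_BD}. First I would apply Lemma~\ref{lem:iter_exp} with the specific choice $u=\td{x}_{j-1}$, which is legal because $\td{x}_{j-1}$ is produced before the $j$-th epoch and is therefore independent of $\Ij$ and $(\sI_{k})_{k\ge 0}$. The LHS of that inequality then simplifies via
\[
\la \nabla f(\td{x}_{j}),\td{x}_{j}-\td{x}_{j-1}\ra+\psi(\td{x}_{j})-\psi(\td{x}_{j-1}) = F(\td{x}_{j})-F(\td{x}_{j-1})+\BD{f}(\td{x}_{j-1},\td{x}_{j}),
\]
and rearranging the resulting inequality yields, with $\kappa := 3C_{G}g^{-1}(3\etaj L/\bj)$,
\[
\frac{\bj}{\etaj\mj}\E\BD{w}(\td{x}_{j-1},\td{x}_{j}) + (1-\kappa)\E\BD{f}(\td{x}_{j-1},\td{x}_{j}) + \Bigl(1+\tfrac{\bj}{\mj}\Bigr)\E(F(\td{x}_{j})-F(\td{x}_{j-1})) \le \Bigl(1+\tfrac{\bj}{\mj}\Bigr)\E\la\ej,\td{x}_{j-1}-\td{x}_{j}\ra.
\]

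Next I would invoke Lemma~\ref{lem:bdfy} with $u=\ej$, $z=\td{x}_{j-1}-\td{x}_{j}$, $y=\td{x}_{j}$ (so that $y+z=\td{x}_{j-1}$) and a free scalar $\alpha>0$, giving
\[
\la\ej,\td{x}_{j-1}-\td{x}_{j}\ra \le \alpha\BD{w}(\td{x}_{j-1},\td{x}_{j}) + \alpha\BD{w^{*}}\bigl(\ej/\alpha+\nabla w(\td{x}_{j}),\nabla w(\td{x}_{j})\bigr).
\]
Applying Lemma~\ref{lem:dual_BD} to the dual Bregman term under assumption \textbf{B}1 (which supplies the symmetric CHEF lower bound) replaces $\BD{w^{*}}$ by $(3/2)\,G^{*}(\ej/\alpha)$, so after taking expectations I get the simple upper bound
\[
\E\la\ej,\td{x}_{j-1}-\td{x}_{j}\ra \le \alpha\,\E\BD{w}(\td{x}_{j-1},\td{x}_{j}) + \tfrac{3\alpha}{2}\E\,G^{*}(\ej/\alpha).
\]

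The final step is to combine the two displays to eliminate $\E\BD{w}(\td{x}_{j-1},\td{x}_{j})$. Solving the first display for $\E\BD{w}$ and substituting into the second yields, after collecting the $\E\la\ej,\td{x}_{j-1}-\td{x}_{j}\ra$ terms on one side,
\[
\Bigl(1-\alpha\tfrac{\etaj(\mj+\bj)}{\bj}\Bigr)\E\la\ej,\td{x}_{j-1}-\td{x}_{j}\ra \le \alpha\tfrac{\etaj(\mj+\bj)}{\bj}\E(F(\td{x}_{j-1})-F(\td{x}_{j})) - \alpha\tfrac{\etaj \mj}{\bj}(1-\kappa)\E\BD{f}(\td{x}_{j-1},\td{x}_{j}) + \tfrac{3\alpha}{2}\E\,G^{*}(\ej/\alpha).
\]
The calibration that produces the exact coefficients of the target inequality is $\alpha = \frac{\bj\gamma_{j}}{\etaj(\mj+\bj)(1+\gamma_{j})}$: this choice makes the coefficient on the left equal to $1/(1+\gamma_{j})$ and the one on $\E(F(\td{x}_{j-1})-F(\td{x}_{j}))$ equal to $\gamma_{j}/(1+\gamma_{j})$, after which multiplying through by $1+\gamma_{j}$ and using $3\alpha(1+\gamma_{j})/2=3\bj\gamma_{j}/\bigl(2\etaj(\mj+\bj)\bigr)$ and $1/\alpha=(1+\gamma_{j})\etaj(\mj+\bj)/(\gamma_{j}\bj)$ gives the stated bound.

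The only subtle point is the calibration of $\alpha$; the rest is bookkeeping. In particular, we need $1-\alpha\,\etaj(\mj+\bj)/\bj>0$ to divide safely, which is automatic for the chosen $\alpha$ since that quantity equals $1/(1+\gamma_{j})>0$ for any $\gamma_{j}>0$. No extra smallness assumption on $\etaj$ is needed beyond \eqref{eq:etajL_cond}, which was already invoked to apply Lemma~\ref{lem:iter_exp}.
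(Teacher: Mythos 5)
Your proof is correct and follows essentially the same route as the paper: apply Lemma~\ref{lem:iter_exp} with $u=\td{x}_{j-1}$, then bound $\E\la\ej,\td{x}_{j-1}-\td{x}_{j}\ra$ via Lemma~\ref{lem:bdfy} combined with Lemma~\ref{lem:dual_BD}, and finally eliminate $\E\BD{w}(\td{x}_{j-1},\td{x}_{j})$ between the two inequalities. The only cosmetic difference is that you leave the Fenchel--Young parameter $\alpha$ free and calibrate it at the end, whereas the paper fixes $\alpha=\frac{\bj}{\etaj(\mj+\bj)}\frac{\gamma_j}{1+\gamma_j}$ upfront; the two choices coincide.
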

\begin{proof}
  By Lemma \ref{lem:iter_exp} with $u = \td{x}_{j-1}$,
  \begin{align*}
    &\frac{\bj}{\etaj\mj}\E \BD{w}(\td{x}_{j-1}, \td{x}_{j})\le \lb 1 + \frac{\bj}{\mj}\rb \E \la\ej, \td{x}_{j-1} - \td{x}_{j}\ra + \frac{\bj}{\mj}\E \lb F(\td{x}_{j-1}) - F(\td{x}_{j})\rb \\
& \quad + \E \lb \psi(\td{x}_{j-1}) - \psi(\td{x}_{j})\rb + 3C_{G}g^{-1}\lb\frac{3\etaj L}{\bj}\rb\E \BD{f}(\td{x}_{j-1}, \td{x}_{j}) + \E \la\nabla f(\td{x}_{j}), \td{x}_{j-1} - \td{x}_{j}\ra\\
& = \lb 1 + \frac{\bj}{\mj}\rb \E \la\ej, \td{x}_{j-1} - \td{x}_{j}\ra + \frac{\bj}{\mj}\E \lb F(\td{x}_{j-1}) - F(\td{x}_{j})\rb  + \E \lb \psi(\td{x}_{j-1}) - \psi(\td{x}_{j})\rb\\
& \quad + 3C_{G}g^{-1}\lb\frac{3\etaj L}{\bj}\rb\E \BD{f}(\td{x}_{j-1}, \td{x}_{j}) + \E (f(\td{x}_{j-1}) - f(\td{x}_{j})) - \E \BD{f}(\td{x}_{j-1}, \td{x}_{j})\\
& = \lb 1 + \frac{\bj}{\mj}\rb\left\{ \E \la\ej, \td{x}_{j-1} - \td{x}_{j}\ra +\E \lb F(\td{x}_{j-1}) - F(\td{x}_{j})\rb\right\}\\
&\quad  - \lb 1 - 3C_{G}g^{-1}\lb\frac{3\etaj L}{\bj}\rb\rb\E \BD{f}(\td{x}_{j-1}, \td{x}_{j}).
  \end{align*}
  This simplifies into 
  \begin{align}
    &\frac{\bj}{\etaj (\mj + \bj)}\E \BD{w}(\td{x}_{j-1}, \td{x}_{j})\le \E \la\ej, \td{x}_{j-1} - \td{x}_{j}\ra +\E \lb F(\td{x}_{j-1}) - F(\td{x}_{j})\rb\nonumber\\
& \qquad - \frac{\mj}{\mj + \bj}\lb 1 - 3C_{G}g^{-1}\lb\frac{3\etaj L}{\bj}\rb\rb \E \BD{f}(\td{x}_{j-1}, \td{x}_{j}).    \label{eq:xjxj-11}
  \end{align}
Fix $\gamma_{j} > 0$. By Lemma \ref{lem:bdfy} with $u = \ej, z = \td{x}_{j-1} - \td{x}_{j}, y = \td{x}_{j}, \alpha = \frac{\bj}{\etaj (\mj + \bj)}\frac{\gamma_{j}}{1 + \gamma_{j}}$, 
\begin{align}
  \lefteqn{\E \la\ej, \td{x}_{j-1} - \td{x}_{j}\ra} \nonumber\\ 
& \le \frac{\bj}{\etaj (\mj + \bj)}\frac{\gamma_{j}}{1 + \gamma_{j}}\lb \BD{w}(\td{x}_{j-1}, \td{x}_{j}) + \BD{w^{*}}\lb \frac{(1 + \gamma_{j})\etaj(\mj + \bj)\ej}{\gamma_{j}\bj} + \nabla w(\td{x}_{j}), \nabla w(\td{x}_{j})\rb\rb. \nonumber
\end{align}
By Lemma \ref{lem:dual_BD} with $h = w$ and $H = G$, 
\begin{align}
  \lefteqn{\frac{\bj}{\etaj (\mj + \bj)} \E \BD{w}(\td{x}_{j-1}, \td{x}_{j})} \nonumber\\
& \ge \frac{1 + \gamma_{j}}{\gamma_{j}}\E \la\ej, \td{x}_{j-1} - \td{x}_{j}\ra - \frac{3\bj}{2\etaj (\mj + \bj)}G^{*}\lb \frac{(1 + \gamma_{j})\etaj(\mj + \bj)\ej}{\gamma_{j}\bj}\rb.  \label{eq:xjxj-12}
\end{align}
The proof is completed by combining \eqref{eq:xjxj-11} and \eqref{eq:xjxj-12}.
\end{proof}

\begin{proof}[\textbf{Proof of Lemma \ref{lem:geom_finite}}]
  We prove the first two claims by induction. When $j = 0$, the claim is obvious. Suppose we prove the claim for the case of $j - 1$, i.e. 
\[\E \BD{w}(x^{*}, \td{x}_{j-1}) < \infty, \quad \E (F(\td{x}_{j-1}) - F(x^{*})) < \infty.\]
Let $\Mj_{k}$ be defined in \eqref{eq:Mjk}. Similar to the proof of Lemma \ref{lem:Mk}, we decompose it into three terms:
\[\Mj_{k} = \tdMj_{k1} + \tdMj_{k2} + \tdMj_{k3},\]
where
\begin{align*}
  &\tdMj_{k1} = \Mj_{k1} = \la \nuj_{k} - \nabla f(\xj_{k}) - \ej, \xj_{k} - \xj_{k + 1}\ra - \frac{2}{3\etaj}\BD{w}(\xj_{k+1}, \xj_{k}),\\
  & \tdMj_{k2} = \la \nabla f(\xj_{k}), \xj_{k} - \xj_{k+1}\ra + \psi(\xj_{k}) - \psi(\xj_{k+1}) - \frac{1}{6\etaj}\BD{w}(\xj_{k+1}, \xj_{k}),\\
  & \tdMj_{k3} = \la \ej, \xj_{k} - \xj_{k+1}\ra - \frac{1}{6\etaj}\BD{w}(\xj_{k+1}, \xj_{k}).
\end{align*}
By \eqref{eq:Mk11}, we have
\begin{align*}
  \Mj_{k1} & \le \frac{1}{\etaj}G^{*} \lb\frac{-3\etaj(\nuj_{k} - \nabla f(\xj_{k}) - \ej)}{2}\rb\\
& = \frac{1}{\etaj}G^{*} \lb-\frac{3\etaj}{2\bj}\sum_{i\in \sI_{k}}\lb \nabla f_{i}(\xj_{k}) - \nabla f_{i}(\xj_{0}) - (\nabla f(\xj_{k}) - \nabla f(\xj_{0}))\rb\rb.
\end{align*}
Using the convexity and the symmetry of $G^{*}$, we have
\begin{align*}
  \Mj_{k1} & \le \frac{1}{2\etaj}\bigg\{G^{*} \lb\frac{3\etaj}{\bj}\sum_{i\in \sI_{k}}\lb \nabla f_{i}(x^{*}) - \nabla f_{i}(\xj_{k}) - (\nabla f(x^{*}) - \nabla f(\xj_{k}))\rb \rb\\
& \quad + G^{*} \lb\frac{3\etaj}{\bj}\sum_{i\in \sI_{k}}\lb \nabla f_{i}(x^{*}) - \nabla f_{i}(\xj_{0}) -  (\nabla f(x^{*}) - \nabla f(\xj_{0}))\rb\rb\bigg\}.
\end{align*}
Using the same arguments as \eqref{eq:Mk12} and \eqref{eq:Mk13},
\begin{align*}
  \lefteqn{\E_{\sI_{k}}G^{*} \lb\frac{3\etaj}{\bj}\sum_{i\in \sI_{k}}\lb \nabla f_{i}(x^{*}) - f_{i}(\xj_{k}) - (\nabla f(x^{*}) - \nabla f(\xj_{k}) )\rb \rb}\\
& \le C_{G}\bj \E_{\tdi} G^{*}\lb\frac{3\etaj}{\bj}\lb \nabla f_{\tdi}(x^{*}) - \nabla f_{\tdi}(\xj_{k}) - (\nabla f(x^{*}) - \nabla f(\xj_{k}))\rb\rb\\
& \le \frac{C_{G}\bj}{2}\E_{\tdi}\bigg\{G^{*}\lb\frac{6\etaj}{\bj}\lb \nabla f_{\tdi}(x^{*}) - \nabla f_{\tdi}(\xj_{k})\rb\rb + G^{*}\lb \frac{6\etaj}{\bj}\lb \nabla f(x^{*}) - \nabla f(\xj_{k})\rb\rb\bigg\}\\
& \stackrel{(i)}{\le} 3C_{G}\etaj L g^{-1}\lb\frac{6\etaj L}{\bj}\rb\E_{\tdi}\bigg\{G^{*}\lb\frac{\nabla f_{\tdi}(x^{*}) - \nabla f_{\tdi}(\xj_{k})}{L}\rb + G^{*}\lb \frac{\nabla f(x^{*}) - \nabla f(\xj_{k})}{L}\rb\bigg\}\\
& \stackrel{(ii)}{\le}3C_{G}\etaj g^{-1}\lb\frac{6\etaj L}{\bj}\rb\E_{\tdi}\{\BD{f_{\tdi}}(\xj_{k}, x^{*}) + \BD{f}(\xj_{k}, x^{*})\}\\
& = 6C_{G}\etaj g^{-1}\lb\frac{6\etaj L}{\bj}\rb\BD{f}(\xj_{k},x^{*}),
\end{align*}
where \emph{(i)} is implied by Lemma \ref{lem:CHEF} and \emph{(ii)} is implied by Lemma \ref{lem:general_cocoercive}. Analogously, 
\begin{align*}
  &\E_{\sI_{k}}G^{*} \lb\frac{3\etaj}{\bj}\sum_{i\in \sI_{k}}\lb \nabla f_{i}(x^{*}) - f_{i}(\xj_{0}) - (\nabla f(x^{*}) - \nabla f(\xj_{0}) )\rb \rb\\
& \,\,\le 6C_{G}\etaj g^{-1}\lb\frac{6\etaj L}{\bj}\rb\BD{f}(\xj_{0}, x^{*}).
\end{align*}
Thus,
\begin{align}
  \E_{\sI_{k}}\tdMj_{k1} &\le 3C_{G} g^{-1}\lb\frac{6\etaj L}{\bj}\rb\lb \BD{f}(\xj_{k}, x^{*}) + \BD{f}(\xj_{0}, x^{*})\rb.  \label{eq:tdMk1}
\end{align}
 Using the same argument as \eqref{eq:Mk2} and the assumption that $\etaj L \le 1 / 6$, we have
\begin{align}
  \tdMj_{k2}\le F(\xj_{k}) - F(\xj_{k+1}) - \lb\frac{1}{6\etaj} - L\rb \BD{w}(\xj_{k+1}, \xj_{k})\le F(\xj_{k}) - F(\xj_{k+1})\label{eq:tdMk2}
\end{align}
By Fenchel-Young inequality, for any convex function $H$,
\[\la\ej, \xj_{k} - \xj_{k+1}\ra\le H^{*}(\ej) + H(\xj_{k} - \xj_{k+1}).\]
Let $H(z) = G(z) / 6\etaj$, then $H^{*}(z) = G^{*}(6\etaj z) / 6\etaj$ and thus,
\[\la\ej, \xj_{k} - \xj_{k+1}\ra\le \frac{1}{6\etaj}G^{*}(6\etaj\ej) + \frac{1}{6\etaj}G(\xj_{k} - \xj_{k+1})\le \frac{1}{6\etaj}G^{*}(6\etaj\ej) + \frac{1}{6\etaj}\BD{w}(\xj_{k},  \xj_{k+1}).\]
By Lemma \ref{lem:ej},
\begin{equation}
  \label{eq:tdMk3}
  \E\tdMj_{k3}\le \frac{1}{6\etaj}\E G^{*}(6\etaj\ej)\le C_{G}g^{-1}\lb\frac{18\etaj L}{\Bj}\rb \bigg\{2\E (F(\xj_{0}) - F(x^{*})) + \DH\bigg\}.
\end{equation}
Putting \eqref{eq:tdMk1} - \eqref{eq:tdMk3} together and using the monotonicity of $g^{-1}$, we conclude that
\begin{align}
  \E\tdMj_{k} & \le \E F(\xj_{k}) - \E F(\xj_{k+1}) + 3C_{G} g^{-1}\lb\frac{6\etaj L}{\bj}\rb\lb \E\BD{f}(\xj_{k}, x^{*}) + \E\BD{f}(\xj_{0}, x^{*})\rb\nonumber\\
& \quad + C_{G}g^{-1}\lb\frac{18\etaj L}{\Bj}\rb \bigg\{2\E (F(\xj_{0}) - F(x^{*})) + \DH\bigg\}\nonumber\\
& \le \E F(\xj_{k}) - \E F(\xj_{k+1}) + \E\BD{f}(\xj_{k}, x^{*}) \nonumber\\
& \quad + C_{G}g^{-1}(18\etaj L)\bigg\{6\E\BD{f}(\xj_{0}, x^{*}) + 2\E (F(\xj_{0}) - F(x^{*})) + \DH \bigg\},\nonumber
\end{align}
where the last inequality uses the condition on $\etaj L$ that
\[3C_{G} g^{-1}\lb\frac{6\etaj L}{\bj}\rb\le \Gamma \le 1.\]
Similar to \eqref{eq:ej4}, we have
\[\E\BD{f}(\xj_{k}, x^{*})\le \E (F(\xj_{k}) - F(x^{*})), \quad \E\BD{f}(\xj_{0}, x^{*})\le \E (F(\xj_{0}) - F(x^{*})).\]
Thus, 
\begin{equation}
  \label{eq:tdMk}
  \E\tdMj_{k} \le \E F(\xj_{k}) - \E F(\xj_{k+1}) + \E (F(\xj_{k}) - F(x^{*})) + \td{M}_{0},
\end{equation}
where
\begin{equation*}
  \td{M}_{0} = C_{G}g^{-1}(18\etaj L)\bigg\{8\E (F(\xj_{0}) - F(x^{*})) + \DH \bigg\}
\end{equation*}

~\\
\noindent Next, by Lemma \ref{lem:iter} with $u = x^{*}$ and by \eqref{eq:tdMk}
\begin{align*}
  &\E \la \nuj_{k}, \xj_{k} - x^{*}\ra + \E\psi(\xj_{k}) - \E\psi(x^{*})\le \frac{1}{\etaj}\lb \E\BD{w}(x^{*}, \xj_{k}) - \E\BD{w}(x^{*}, \xj_{k+1})\rb\\
& \quad + \E F(\xj_{k}) - \E F(\xj_{k+1}) + \E (F(\xj_{k}) - F(x^{*})) + \td{M}_{0}.
\end{align*}
In addition,
\begin{align*}
  &\E \la \nuj_{k}, \xj_{k} - x^{*}\ra = \E \E_{\sI_{k}}\la \nuj_{k}, \xj_{k} - x^{*}\ra\\
&  = \E \la\nabla f(\xj_{k}), \xj_{k} - x^{*}\ra + \E \la \ej, \xj_{k} - x^{*}\ra\\
& \ge \E f(\xj_{k}) - f(x^{*}) + \E \la \ej, \xj_{k} - x^{*}\ra.
\end{align*}
The above two inequalities imply that 
\begin{align}
  &\frac{1}{\etaj}\E \BD{w}(x^{*}, \xj_{k+1}) + \E (F(\xj_{k+1}) - F(x^{*}))\nonumber\\
\le & \frac{1}{\etaj}\E \BD{w}(x^{*}, \xj_{k}) + \E (F(\xj_{k}) - F(x^{*})) + \E \la \ej, x^{*} - \xj_{k}\ra + \td{M}_{0}. \label{eq:geom_finite_1}
\end{align}
By Fenchel-Young inequality, for any constant $c_{j} > 0$,
\[\la \ej, x^{*} - \xj_{k}\ra \le \frac{c_{j}}{\etaj}G^{*}\lb\frac{\etaj}{c_{j}}\ej\rb + \frac{c_{j}}{\etaj}G(x^{*} - \xj_{k})\le \frac{c_{j}}{\etaj}G^{*}\lb\frac{\etaj}{c_{j}}\ej\rb + \frac{c_{j}}{\etaj}\BD{w}(x^{*}, \xj_{k}).\]
By \eqref{eq:geom_finite_1} and Lemma \ref{lem:ej},
\begin{align}
  \lefteqn{\frac{1}{\etaj}\E \BD{w}(x^{*}, \xj_{k+1}) + \E (F(\xj_{k+1}) - F(x^{*}))}\nonumber\\
&\le  \frac{1 + c_{j}}{\etaj}\E \BD{w}(x^{*}, \xj_{k}) + \E (F(\xj_{k}) - F(x^{*})) + \td{M}_{0}' + \td{M}_{0}\label{eq:geom_finite_2}, 
\end{align}
where
\[\td{M}_{0}' = C_{G}g^{-1}\lb \frac{3\etaj L}{c_{j}B_{j}}\rb\bigg\{2\E (F(\td{x}_{j-1}) - F(x^{*})) + \DH\bigg\}.\]
Let $c_{j} = 1 / 2\mj$. Then it is easy to see that $\td{M}_{0}'\le \td{M}_{0}$. Further let 
\[\Qj_{k} = \frac{1}{\etaj}\E \BD{w}(x^{*}, \xj_{k}) + \E (F(\xj_{k}) - F(x^{*})).\]
Then \eqref{eq:geom_finite_2} implies that
\begin{align*}
  \Qj_{k+1}\le \lb 1 + \frac{1}{2\mj}\rb\Qj_{k} + 2\td{M}_{0}&\Longrightarrow \Qj_{k+1} + 4\mj \td{M}_{0}\le \lb 1 + \frac{1}{2\mj}\rb \lb \Qj_{k}+ 4\mj\td{M}_{0}\rb\\
& \Longrightarrow \Qj_{k}\le \lb 1 + \frac{1}{2\mj}\rb^{k}\lb \Qj_{0}+ 4\mj\td{M}_{0}\rb.
\end{align*}
Therefore, 
\[\E \Qj_{\Nj} \le \E \lb 1 + \frac{1}{2\mj}\rb^{\Nj}\lb \Qj_{0}+ 4\mj\td{M}_{0}\rb.\]
Recalling that 
\[P(\Nj = k) = \frac{\bj}{\mj + \bj}\lb \frac{\mj}{\mj + \bj}\rb^{k}\le \lb \frac{\mj}{\mj + 1}\rb^{k}.\]
Thus, 
\[\E \lb 1 + \frac{1}{2\mj}\rb^{\Nj} \le \sum_{k\ge 0}\lb\frac{2\mj + 1}{2\mj}\frac{\mj}{\mj + 1}\rb^{k} = \sum_{k\ge 0}\lb\frac{2\mj + 1}{2\mj + 2}\rb^{k} \le 2\mj + 2.\]
As a result,
\[\E \Qj_{\Nj} \le (2\mj + 2)\lb \Qj_{0}+ 4\mj\td{M}_{0}\rb.\]
By the induction hypothesis, we know that 
\[\Qj_{0} + 4\mj\td{M}_{0} < \infty\]
and thus,
\[\E \Qj_{\Nj} = \frac{1}{\etaj}\E \BD{w}(x^{*}, \td{x}_{j}) + \E (F(\td{x}_{j}) - F(x^{*})) < \infty.\]
To prove the last claim, by Fenchel-Young inequality, we have
\[|\la\ej, \td{x}_{j} - x^{*}\ra|\le G^{*}(\ej) + G(\td{x}_{j} - x^{*})\le G^{*}(\ej) + \BD{w}(x^{*}, \td{x}_{j}).\]
The finiteness of the expectation is then followed by the first two claims and Lemma \ref{lem:ej}.
\end{proof}

\subsection{One-epoch analysis}
First we prove a more general version of Theorem \ref{thm:one_epoch_simplify}. 
\begin{theorem}\label{thm:one_epoch}
Fix any $\Gamma_{j}\in (0, 1)$. Assume that
\[6\etaj L\le \min \left\{1, \bj g\lb \frac{\Gamma_{j}}{3C_{G}}\rb\right\}.\]
Then under assumptions \textbf{B}1 and \textbf{B}2,
  \begin{align*}
    \lefteqn{\lb 1 + \frac{\bj}{\mj}\rb \E (F(\td{x}_{j}) - F(x^{*})) + \frac{\bj (1 - \Gamma_{j})}{\etaj \mj}\E \BD{w}(x^{*}, \td{x}_{j})} \\
& \le \lb \Gamma_{j} + \frac{\bj}{\mj} + 3a_{j}\lb 1 + \frac{\bj}{\mj}\rb\rb \E (F(\td{x}_{j-1}) - F(x^{*})) + \frac{\bj (1 - \Gamma_{j})}{\etaj \mj}\E \BD{w}(x^{*}, \td{x}_{j-1})\\
& \quad  + \frac{3a_{j}}{2}\lb 1 + \frac{\bj}{\mj}\rb \DH I(\Bj < n),
  \end{align*}
where 
\[a_{j} = C_{G}g^{-1}\lb\frac{3\etaj L (\mj + \bj)}{\Bj \bj \Gamma_{j}}\rb. \]
\end{theorem}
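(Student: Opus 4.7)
The plan is to combine the three key lemmas of this subsection---Lemma \ref{lem:iter_exp}, Lemma \ref{lem:xjxj-1}, and Lemma \ref{lem:ej}---in a manner closely analogous to the proof of Theorem \ref{thm:one_epoch_L2}, but now with the Bregman divergence $\BD{w}$ playing the role of $\|\cdot - x^*\|_2^2 / 2$ and with $G^*$ replacing squared norms of noise terms. Throughout I will set
\[
\gamma_j = \frac{\Gamma_j}{1 - \Gamma_j}, \qquad \text{so that } \Gamma_j = \frac{\gamma_j}{1+\gamma_j}.
\]
The stepsize hypothesis $6\eta_j L \le b_j\, g(\Gamma_j/3C_G)$ is equivalent to $3 C_G\, g^{-1}(3\eta_j L / b_j) \le \Gamma_j$; this is the condition that will make the undesirable $\BD{f}(\td{x}_{j-1},\td{x}_j)$ terms cancel.

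First I will apply Lemma \ref{lem:iter_exp} with $u = x^*$. By convexity of $f$, $\la \nabla f(\td{x}_j), \td{x}_j - x^*\ra \ge f(\td{x}_j) - f(x^*)$, which combines with the $\psi(\td{x}_j)-\psi(x^*)$ term on the left to yield $\E(F(\td{x}_j) - F(x^*))$. The resulting inequality has three remaining right-hand-side contributions to control: a Bregman-divergence telescoping pair with coefficient $b_j/(\eta_j m_j)$, a cross term $(1+b_j/m_j)\E\la \ej, \td{x}_{j-1}-\td{x}_j\ra$, and a $\BD{f}(\td{x}_{j-1},\td{x}_j)$ term with coefficient $3 C_G g^{-1}(3\eta_j L/b_j)$, along with the $(b_j/m_j)\E[F(\td{x}_{j-1})-F(\td{x}_j)]$ term.

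Next I will substitute Lemma \ref{lem:xjxj-1} into the cross term. After multiplying through by $(1+b_j/m_j)=(m_j+b_j)/m_j$, the negative $\BD{f}$ contribution from Lemma \ref{lem:xjxj-1} has coefficient $-\gamma_j(1 - 3C_G g^{-1}(3\eta_j L/b_j))$. Combined with the $+3C_G g^{-1}(\cdot)$ coefficient of $\BD{f}$ inherited from Lemma \ref{lem:iter_exp}, the total $\BD{f}$ coefficient is at most $\Gamma_j - \gamma_j(1-\Gamma_j) = 0$ under the chosen $\gamma_j$ and the stepsize hypothesis, so these terms drop. This leaves an $F(\td{x}_{j-1})-F(\td{x}_j)$ term with coefficient $\bigl((m_j+b_j)\gamma_j + b_j\bigr)/m_j$, and a $G^*$ term of the form $\frac{3 b_j \gamma_j}{2\eta_j m_j}\, \E G^*(\beta \ej)$ with $\beta = (1+\gamma_j)\eta_j(m_j+b_j)/(\gamma_j b_j) = \eta_j(m_j+b_j)/(\Gamma_j b_j)$.

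Third, I apply Lemma \ref{lem:ej} to the $G^*$ term; the argument of $g^{-1}$ becomes exactly $3\eta_j L(m_j+b_j)/(\Gamma_j b_j B_j)$, so that $C_G g^{-1}(3\beta L/B_j)=a_j$ as defined in the theorem. The resulting upper bound contributes $2\E(F(\td{x}_{j-1})-F(x^*))+\DH$ times a coefficient that simplifies, using $\gamma_j/\Gamma_j = 1/(1-\Gamma_j)$, to $3 a_j (m_j+b_j)/\bigl(2 m_j (1-\Gamma_j)\bigr)$. Finally, I split $F(\td{x}_{j-1})-F(\td{x}_j) = [F(\td{x}_{j-1})-F(x^*)]-[F(\td{x}_j)-F(x^*)]$ to consolidate the $F(\td{x}_j)-F(x^*)$ terms on the left, and divide both sides by $1+\gamma_j = 1/(1-\Gamma_j)$. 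A short algebraic computation shows that the $F(\td{x}_j)-F(x^*)$ coefficient becomes $1 + b_j/m_j$, the Bregman coefficients on both sides become $b_j(1-\Gamma_j)/(\eta_j m_j)$, the $F(\td{x}_{j-1})-F(x^*)$ coefficient becomes $\Gamma_j + b_j/m_j + 3 a_j(1+b_j/m_j)$, and the $\DH$ coefficient becomes $\frac{3 a_j}{2}(1+b_j/m_j)$. This matches the statement.

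The main obstacle is purely bookkeeping: one must carefully track coefficients through the substitution and verify the two miracles---cancellation of the $\BD{f}$ term under the hypothesis $3C_G g^{-1}(3\eta_j L/b_j)\le \Gamma_j$, and emergence of the factor $(1-\Gamma_j)$ in front of $\BD{w}$ after dividing by $1+\gamma_j$. There is no new analytic input beyond what the three lemmas provide.
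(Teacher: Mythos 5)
Your proof is correct and follows essentially the same route as the paper's: combine Lemma \ref{lem:iter_exp} (with $u=x^*$), Lemma \ref{lem:xjxj-1} for the cross term, and Lemma \ref{lem:ej} for the $G^*$ bound, with $\gamma_j = \Gamma_j/(1-\Gamma_j)$, check that the $\BD{f}$ terms cancel under the stepsize hypothesis $3C_G\,g^{-1}(3\eta_j L/b_j)\le\Gamma_j$, and divide by $1+\gamma_j$ at the end. The paper merely applies the lemmas in the opposite order (Lemma \ref{lem:ej}, then Lemma \ref{lem:xjxj-1}, then Lemma \ref{lem:iter_exp}), but the substitution chain and all coefficient bookkeeping are identical to yours.
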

\begin{proof}
Let $\gamma_{j} = \Gamma_{j} / (1 - \Gamma_{j})$ (and hence $\Gamma_{j} = \gamma_{j} / (1 + \gamma_{j})$). For convenience, let 
\[\beta_{j} = \frac{\etaj (\mj + \bj)}{\bj \Gamma_{j}} = \frac{(1 + \gamma_{j})\etaj (\mj + \bj)}{\gamma_{j}\bj}.\]
By Lemma \ref{lem:ej},
\[\frac{1}{\beta_{j}}\E G^{*}(\beta_{j}\ej)\le 2a_{j}\E (F(\td{x}_{j-1}) - F(x^{*})) + a_{j}\DH I(\Bj < n).\]  
By Lemma \ref{lem:xjxj-1},
\begin{align}
& \E \la \ej, \td{x}_{j-1} - \td{x}_{j}\ra \le \gamma_{j}\E \lb F(\td{x}_{j-1}) - F(\td{x}_{j})\rb + 3a_{j}(1 + \gamma_{j})\E (F(\td{x}_{j-1}) - F(x^{*}))\nonumber\\
&\qquad \qquad + \frac{3}{2}a_{j}(1 + \gamma_{j})\DH I(\Bj < n) - \frac{\mj \gamma_{j}}{\mj + \bj}\lb 1 - 3C_{G}g^{-1}\lb\frac{3\etaj L}{\bj}\rb\rb \E \BD{f}(\td{x}_{j-1}, \td{x}_{j}).\label{eq:xjxj-1main}
\end{align}
By Lemma \ref{lem:iter_exp} with $u = x^{*}$,
\begin{align*}
  &\E\la \nabla f(\td{x}_{j}), \td{x}_{j} - x^{*} \ra + \E (\psi(\td{x}_{j}) - \psi(x^{*})) \le \frac{\bj}{\etaj\mj}\E \lb \BD{w}(x^{*}, \td{x}_{j-1}) - \BD{w}(x^{*}, \td{x}_{j})\rb\\
  & + \lb 1 + \frac{\bj}{\mj}\rb\E \la \ej, \td{x}_{j-1} - \td{x}_{j}\ra + \frac{\bj}{\mj}\E \lb F(\td{x}_{j-1}) - F(\td{x}_{j})\rb + 3C_{G}g^{-1}\lb\frac{3\etaj L}{\bj}\rb\E \BD{f}(\td{x}_{j-1}, \td{x}_{j}).\\
\stackrel{(i)}{\le} & \frac{\bj}{\etaj\mj}\E \lb \BD{w}(x^{*}, \td{x}_{j-1}) - \BD{w}(x^{*}, \td{x}_{j})\rb + \lb\gamma_{j}\lb 1 + \frac{\bj}{\mj}\rb + \frac{\bj}{\mj}\rb\E \lb F(\td{x}_{j-1}) - F(\td{x}_{j})\rb\\
&  + 3a_{j}(1 + \gamma_{j})\lb 1 + \frac{\bj}{\mj}\rb\E (F(\td{x}_{j-1}) - F(x^{*})) + \frac{3}{2}a_{j}(1 + \gamma_{j})\lb 1 + \frac{\bj}{\mj}\rb \DH I(\Bj < n)\\
& - \lb \gamma_{j} - (1 + \gamma_{j})3C_{G}g^{-1}\lb\frac{3\etaj L}{\bj}\rb\rb\E \BD{f}(\td{x}_{j-1}, \td{x}_{j})\\
\stackrel{(ii)}{\le} & \frac{\bj}{\etaj\mj}\E \lb \BD{w}(x^{*}, \td{x}_{j-1}) - \BD{w}(x^{*}, \td{x}_{j})\rb + \lb\gamma_{j}\lb 1 + \frac{\bj}{\mj}\rb + \frac{\bj}{\mj}\rb\E \lb F(\td{x}_{j-1}) - F(\td{x}_{j})\rb\\
&  + 3a_{j}(1 + \gamma_{j})\lb 1 + \frac{\bj}{\mj}\rb\E (F(\td{x}_{j-1}) - F(x^{*})) + \frac{3}{2}a_{j}(1 + \gamma_{j})\lb 1 + \frac{\bj}{\mj}\rb \DH I(\Bj < n),
\end{align*}
where \emph{(i)} uses \eqref{eq:xjxj-1main} and \emph{(ii)} uses the condition that
\[\gamma_{j} - (1 + \gamma_{j})3C_{G}g^{-1}\lb\frac{3\etaj L}{\bj}\rb = 3C_{G}(1 + \gamma_{j})\lb\frac{\Gamma_{j}}{3C_{G}} - g^{-1}\lb\frac{3\etaj L}{\bj}\rb\rb\le 0.\]
By convexity of $f$, we have
\[\la\nabla f(\td{x}_{j}), \td{x}_{j} - x^{*} \ra + \psi(\td{x}_{j}) - \psi(x^{*}) \ge f(\td{x}_{j}) - f(x^{*}) + \psi(\td{x}_{j}) - \psi(x^{*}) = F(\td{x}_{j}) - F(x^{*}).\]
Therefore, we have
\begin{align*}
\lefteqn{\E (F(\td{x}_{j}) - F(x^{*}))} \\
& \le \frac{\bj}{\etaj\mj}\E \lb \BD{w}(x^{*}, \td{x}_{j-1}) - \BD{w}(x^{*}, \td{x}_{j})\rb + \lb\gamma_{j}\lb 1 + \frac{\bj}{\mj}\rb + \frac{\bj}{\mj}\rb\E \lb F(\td{x}_{j-1}) - F(\td{x}_{j})\rb\\
& \quad + 3a_{j}(1 + \gamma_{j})\lb 1 + \frac{\bj}{\mj}\rb\E (F(\td{x}_{j-1}) - F(x^{*})) + \frac{3}{2}a_{j}(1 + \gamma_{j})\lb 1 + \frac{\bj}{\mj}\rb \DH I(\Bj < n).  
\end{align*}
Rearanging the terms yields
\begin{align*}
  \lefteqn{\lb 1 + \gamma_{j}\rb\lb 1 + \frac{\bj}{\mj}\rb\E (F(\td{x}_{j}) - F(x^{*})) + \frac{\bj}{\etaj\mj}\E \BD{w}(x^{*}, \td{x}_{j})} \\
& \le \lb\gamma_{j} + (1 + \gamma_{j})\frac{\bj}{\mj} + 3a_{j}(1 + \gamma_{j})\lb 1 + \frac{\bj}{\mj}\rb\rb\E (F(\td{x}_{j-1}) - F(x^{*})) + \frac{\bj}{\etaj\mj}\E \BD{w}(x^{*}, \td{x}_{j - 1})\\
& \quad + \frac{3}{2}a_{j}(1 + \gamma_{j})\lb 1 + \frac{\bj}{\mj}\rb \DH I(\Bj < n).
\end{align*}
The proof is completed by dividing both sides by $1 + \gamma_{j} = \frac{1}{1 - \Gamma_{j}}$.
\end{proof}

\begin{proof}[\textbf{Proof of Theorem \ref{thm:one_epoch_simplify}}]
Let 
\[\Gamma_{j}\equiv \Gamma = \frac{1}{4\alpha^{1/\xi}}.\]
Since $6\etaj L\le \bj g\lb \frac{\Gamma_{j}}{3C_{G}}\rb$, $\mj\ge 4b$ and $g^{-1}$ is increasing,
\[a_{j} \le C_{G}g^{-1}\lb g\lb \frac{\Gamma}{3C_{G}}\rb \frac{\mj + b}{2\Gamma\Bj}\rb\le C_{G}g^{-1}\lb g\lb \frac{\Gamma}{3C_{G}}\rb \frac{5\mj}{8\Gamma\Bj}\rb.\]
By definition,
\[\Bj \ge \frac{5}{8\Gamma}g\lb\frac{\Gamma}{3C_{G}}\rb \frac{\mj}{g(1 / \mj)}.\]
Thus, 
\begin{equation*}
  a_{j} \le \frac{C_{G}}{\mj}.
\end{equation*}
By assumption \textbf{B}3, 
\[\frac{1}{4}\E (F(\td{x}_{j}) - F(x^{*}))\ge \frac{\mu}{4}\E \BD{w}(x^{*}, \td{x}_{j}).\]  
By Theorem \ref{thm:one_epoch},
\begin{align}
      \lefteqn{\lb \frac{3}{4} + \frac{b}{\mj}\rb \E (F(\td{x}_{j}) - F(x^{*})) + \lb\frac{\mu}{4} + \frac{b (1 - \Gamma)}{\eta \mj}\rb\E \BD{w}(x^{*}, \td{x}_{j})} \nonumber\\
& \le \lb \Gamma + \frac{b}{\mj} + \frac{3C_{G}}{\mj}\lb 1 + \frac{b}{\mj}\rb\rb \E (F(\td{x}_{j-1}) - F(x^{*})) + \frac{b (1 - \Gamma)}{\eta\mj }\E \BD{w}(x^{*}, \td{x}_{j-1})\nonumber\\
& \quad  + \frac{3C_{G}}{2 \mj}\lb 1 + \frac{b}{\mj}\rb \DH I(\Bj < n).\label{eq:main1}
\end{align}
Since $\mj \ge 4\alpha^{1/\xi}\max\{b, 4C_{G}\}$,
\[\frac{b}{\mj}\le \frac{1}{4\alpha^{1/\xi}} = \Gamma,\]
and
\[\frac{3C_{G}}{\mj}\lb 1 + \frac{b}{\mj}\rb = \frac{3C_{G}}{m_{0}\alpha^{j}}\lb 1 + \frac{b}{\mj}\rb\le \frac{3\Gamma}{4\alpha^{j}}\lb 1  + \frac{1}{4}\rb\le \frac{\Gamma}{\alpha^{j}}\le \Gamma.\]
Therefore, \eqref{eq:main1} implies that
\begin{align}
      \lefteqn{\frac{3}{4}\E (F(\td{x}_{j}) - F(x^{*})) + \lb\frac{\mu}{4} + \frac{b (1 - \Gamma)}{\eta \mj}\rb\E \BD{w}(x^{*}, \td{x}_{j})} \nonumber\\
& \le 3\Gamma \E (F(\td{x}_{j-1}) - F(x^{*})) + \frac{b (1 - \Gamma)}{\eta\mj}\E \BD{w}(x^{*}, \td{x}_{j-1}) + \frac{\Gamma}{2\alpha^{j}}\DH I(\Bj < n)\nonumber\\
& \le 3\Gamma \E (F(\td{x}_{j-1}) - F(x^{*})) + \frac{b (1 - \Gamma)}{\eta\mj}\E \BD{w}(x^{*}, \td{x}_{j-1}) + \frac{\DH}{8\alpha^{j}} I(\Bj < n), \nonumber
\end{align}
where the last line uses the fact that $\Gamma \le \frac{1}{4}$. The proof is completed by multiplying both sides by $\frac{4}{3}$.  
\end{proof}

\subsection{Multi-epoch analysis}
\begin{proof}[\textbf{Proof of Theorem \ref{thm:main}}]
Let 
\[\Lj = \E (F(\td{x}_{j}) - F(x^{*})) + \lb \frac{4b (1 - \Gamma)}{3\eta}\frac{1}{\mj} + \frac{\mu}{3}\rb\E \BD{w}(x^{*}, \td{x}_{j}).\]
Then Theorem \eqref{thm:one_epoch_simplify} implies that
\begin{align}
  \Lj &\le 4\Gamma \E (F(\td{x}_{j-1}) - F(x^{*})) + \frac{4b (1 - \Gamma)}{3\eta}\frac{1}{\mj}\E \BD{w}(x^{*}, \td{x}_{j-1}) + \frac{\DH}{6\alpha^{j}} I(\Bj < n)\nonumber\\
& \le \max\left\{4\Gamma, \frac{\frac{4b (1 - \Gamma)}{3\eta}\frac{1}{\mj}}{\frac{4b (1 - \Gamma)}{3\eta}\frac{1}{\mjj} + \frac{\mu}{3}}\right\}\Ljj + \frac{\DH}{6\alpha^{j}} I(\Bj < n)\nonumber\\
& = \max\left\{\frac{1}{\alpha^{1/\xi}}, \frac{1}{\alpha + \frac{m_{0}\eta\mu }{4b(1 - \Gamma)}\alpha^{j}}\right\}\Ljj + \frac{\DH}{6\alpha^{j}} I(\Bj < n)\nonumber\\
& \stackrel{(i)}{\le} \max\left\{\frac{1}{\alpha^{1/\xi}}, \frac{1}{\alpha + \eta\mu \alpha^{j + 1/\xi}}\right\}\Ljj + \frac{\DH}{6\alpha^{j}} I(\Bj < n),\label{eq:main3}
\end{align}
where \emph{(i)} uses the fact that 
\[\frac{m_{0}}{4b(1 - \Gamma)}\ge \frac{m_{0}}{4b}\ge \alpha^{1/\xi}.\]
For any $j\ge 0$,
\[\max\left\{\frac{1}{\alpha^{1/\xi}}, \frac{1}{\alpha + \eta\mu \alpha^{j + 1/\xi}}\right\}\le \max\left\{\frac{1}{\alpha^{1/\xi}}, \frac{1}{\alpha}\right\}\le \frac{1}{\alpha}.\]
When $j\ge T_{\kappa}^{*}$, $\alpha^{j}\ge \kappa / \eta L = 1 / \eta \mu$ and thus
\[\max\left\{\frac{1}{\alpha^{1/\xi}}, \frac{1}{\alpha + \eta\mu \alpha^{j + 1/\xi}}\right\}\le \frac{1}{\alpha^{1/\xi}}.\]
In summary, 
\begin{equation*}
  \max\left\{\frac{1}{\alpha^{1/\xi}}, \frac{1}{\alpha + \eta\mu \alpha^{j + 1/\xi}}\right\}\le\lambda_{j}^{-1}.
\end{equation*}
Plugging this into \eqref{eq:main3}, we conclude that
\begin{equation*}
  \Lj\le \lambda_{j}^{-1}\Ljj + \frac{\DH}{6\alpha^{j}}I(\Bj < n).
\end{equation*}
Using the same argument as in the proof of Theorem \ref{thm:mainL2}, we can prove that
\begin{equation*}
\L_{T} \le \Lambda_{T}^{-1} \L_{0} + \td{\Lambda}_{T}^{-1}\frac{\DH (T\wedge T_{n}^{*})}{6}.
\end{equation*}


\noindent The proof is then completed by noting that
\[\L_{T}\ge \E (F(\td{x}_{T}) - F(x^{*}))\]
and 
\begin{align*}
  \L_{0} & = \DF + \lb\frac{\mu}{3} + \frac{4b (1 - \Gamma)}{3\eta}\frac{1}{m_{0}}\rb\E \BD{w}(x^{*}, \td{x}_{0})\\
& \stackrel{(i)}{\le} \DF + \lb\frac{1}{3} + \frac{4(1 - \Gamma)\Gamma}{3\eta L}\rb \Dx\\
& \stackrel{(ii)}{\le} \DF + \lb\frac{1}{18\eta L} + \frac{1}{4\eta L}\rb\Dx = \DF + \frac{\Dx}{3\eta L},
\end{align*}
where \emph{(i)} uses the fact that $\mu \le L$ and the condition $b / m_{0}\le \Gamma$ and \emph{(ii)} uses the fact that $\eta L \le 1 / 6$ and $\Gamma \le 1 / 4$.
\end{proof}

\subsection{Complexity analysis}
\begin{proof}[\textbf{Proof of Theorem \ref{thm:complexity}}]
Let
\begin{equation*}
  T^{(1)}(\eps) = \min\left\{T: \Lambda_{T}\ge \frac{\Dx + \DF}{\eps \eta L}\right\}, \quad T^{(2)}(\eps) = \min\left\{T: \td{\Lambda}_{T}\ge \frac{\DH T_{n}^{*}}{\eps}\right\}.
\end{equation*}
Then for any $T \ge \max\{T^{(1)}(\eps), T^{(2)}(\eps)\}$,
\[\E (f(\td{x}_{T}) - f(x^{*}))\le \eps \eta L \frac{\DF + \frac{1}{3\eta L}\Dx}{\DF + \Dx} + \frac{\eps}{6}\le \frac{1}{3}\eps + \frac{1}{6}\eps \le \eps.\]
where we use the fact that $\eta L \le 1 / 6\le 1 / 3$. This entails that
\begin{equation*}
  T(\eps)\le \max\{T^{(1)}(\eps), T^{(2)}(\eps)\}.
\end{equation*}

The same arguments as in the proof of Theorem \ref{thm:complexityL2} imply that
\[  \alpha^{T(\eps)} = \td{O}\lb\min\left\{\frac{\D}{\eps}, \kappa\lb\frac{\Dx + \DF}{\eps \kappa}\rb^{\xi}_{*} + \frac{\DH}{\eps}, \td{\kappa}\lb\frac{\D}{\eps \td{\kappa}}\rb^{\xi}_{*}\right\}\rb.\]
On the other hand, since $g$ is increasing, whenever $j \le T_{n}^{*}$,
\begin{equation*}
\frac{\Bj}{B_{j-1}} = \frac{\mj}{\mjj}\frac{g(1/\mjj)}{g(1 / \mj)} \ge \frac{\mj}{mjj} = \alpha.
\end{equation*}
Thus, if $T(\eps) \le T_{n}^{*}$, 
\[\sum_{j=1}^{T(\eps)}\Bj \le B_{T(\eps)}\sum_{j=1}^{T(\eps)}\alpha^{-j} = O(B_{T(\eps)}) = O\lb\frac{\alpha^{T(\eps)}}{g(\alpha^{-T(\eps)})}\rb. \]
If $T(\eps) > T_{n}^{*}$, since $\Bj \le n$,
\[\sum_{j=1}^{T(\eps)}\Bj \le n T(\eps).\]
Putting the two pieces together, we obtain that
\begin{equation*}
  \sum_{j=1}^{T(\eps)}\Bj = O\lb \frac{\alpha^{T(\eps)}}{g(\alpha^{-T(\eps)})} \wedge nT(\eps)\rb.
\end{equation*}
Finally, by \eqref{eq:generic_complexity}, we conclude that
\begin{equation*}
  \E\comp(\eps) = O\lb \alpha^{T(\eps)} + \frac{\alpha^{T(\eps)}}{g(\alpha^{-T(\eps)})} \wedge nT(\eps)\rb = O\lb \frac{\alpha^{T(\eps)}}{g(\alpha^{-T(\eps)})} \wedge (\alpha^{T(\eps)} + nT(\eps))\rb.
\end{equation*}
The proof is then completed by replacing $\alpha^{T(\eps)}$ ith $A(\eps)$. 
\end{proof}

\section{Miscellaneous}\label{app:miscellaneous}
\begin{lemma}[Theorem 2.1.5 of \cite{nesterov04}.]\label{lem:cocoercive}
Let $g$ be a convex function that satisfies the assumption $\textbf{A}1$,
\begin{align*}
\|\nabla g(x) - \nabla g(y)\|_{2}^{2}&\le 2L(\breg{g}{x}{y}).
\end{align*}
\end{lemma}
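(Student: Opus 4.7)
The plan is to use the standard auxiliary-function trick: subtract off a linear term to turn the coercivity estimate into a minimization bound. Concretely, for a fixed $y$, define
\[
\phi_{y}(z) = g(z) - \langle \nabla g(y), z \rangle.
\]
Since $g$ is convex with $L$-Lipschitz gradient (by assumption \textbf{A}1), and subtracting a linear functional preserves both properties, $\phi_{y}$ is itself convex with $L$-Lipschitz gradient. Moreover $\nabla \phi_{y}(y) = 0$, so $y$ is a global minimizer of $\phi_{y}$.

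Next I would apply the descent lemma (a direct consequence of $L$-smoothness) to $\phi_{y}$ at the point $z$, evaluated at $z - \frac{1}{L}\nabla \phi_{y}(z)$, yielding
\[
\phi_{y}\!\left(z - \tfrac{1}{L}\nabla \phi_{y}(z)\right) \le \phi_{y}(z) - \frac{1}{2L}\|\nabla \phi_{y}(z)\|_{2}^{2}.
\]
Since $y$ minimizes $\phi_{y}$, the left-hand side is bounded below by $\phi_{y}(y)$, so
\[
\phi_{y}(y) \le \phi_{y}(z) - \frac{1}{2L}\|\nabla \phi_{y}(z)\|_{2}^{2}.
\]

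Finally, I would substitute $z = x$ and unfold the definition of $\phi_{y}$: the left-hand side becomes $g(y) - \langle \nabla g(y), y\rangle$, the first term on the right becomes $g(x) - \langle \nabla g(y), x\rangle$, and $\nabla \phi_{y}(x) = \nabla g(x) - \nabla g(y)$. Rearranging and multiplying both sides by $2L$ gives
\[
\|\nabla g(x) - \nabla g(y)\|_{2}^{2} \le 2L\bigl(g(x) - g(y) - \langle \nabla g(y), x - y\rangle\bigr),
\]
which is the desired inequality. There is no real obstacle here; the only ``trick'' is recognizing that the right auxiliary function to minimize is the tilted function $\phi_{y}$, since its stationary point is exactly $y$ and so the descent lemma converts one gradient step's progress into the Bregman gap.
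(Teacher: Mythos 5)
Your proof is correct and is precisely the standard argument given in the cited Theorem~2.1.5 of \cite{nesterov04}: tilt $g$ by the linear functional $\langle \nabla g(y), \cdot\rangle$ so that $y$ becomes a global minimizer of the tilted function, then apply one descent-lemma step to extract $\frac{1}{2L}\|\nabla g(x)-\nabla g(y)\|_{2}^{2}$. The paper itself gives no proof and simply cites this result, so there is nothing to contrast; the only small point worth noting is that assumption \textbf{A}1 is stated as the one-sided smoothness (descent-lemma) bound rather than gradient Lipschitzness, but since subtracting a linear term leaves the Bregman divergence unchanged, that bound transfers verbatim to $\phi_y$, which is all your argument actually uses.
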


\begin{lemma}[Lemma A.1 of \cite{lei17}]\label{lem:var_sampling}
Let $z_{1}, \ldots, z_{M}\in \R^{d}$ be an arbitrary population and $\mathcal{J}$ be a uniform random subset of $[M]$ with size $m$. Then 
\[\E \left\|\frac{1}{m}\sum_{j\in \mathcal{J}}z_{j} - \frac{1}{M}\sum_{j=1}^{M}z_{j}\right\|_{2}^{2}\le \frac{I(m < M)}{m}\cdot \frac{1}{M}\sum_{j=1}^{M}\|z_{j}\|_{2}^{2}.\]
\end{lemma}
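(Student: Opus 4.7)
The plan is to prove this via a direct second-moment computation for sampling without replacement. First, note that when $m = M$ the subset $\mathcal{J}$ is deterministic and both sides vanish, so the indicator $I(m < M)$ disposes of that case. Assume henceforth $m < M$.

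The natural first reduction is to center. Let $\bar z = \frac{1}{M}\sum_{j=1}^M z_j$ and set $y_j = z_j - \bar z$. Then $\frac{1}{m}\sum_{j\in\mathcal{J}} z_j - \bar z = \frac{1}{m}\sum_{j\in\mathcal{J}} y_j$, so the left-hand side equals $\mathbb{E}\bigl\|\frac{1}{m}\sum_{j\in\mathcal{J}} y_j\bigr\|_2^2$, and by construction $\sum_{j=1}^M y_j = 0$.

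Next I would expand the squared norm and use the first- and second-order inclusion probabilities of a uniform random subset of size $m$ drawn from $[M]$: namely $\Pr(i\in\mathcal{J}) = m/M$ and $\Pr(i,j\in\mathcal{J}) = m(m-1)/[M(M-1)]$ for $i\neq j$. This gives
\[
\mathbb{E}\Bigl\|\tfrac{1}{m}\sum_{j\in\mathcal{J}} y_j\Bigr\|_2^2 = \frac{1}{m^2}\Bigl(\tfrac{m}{M}\sum_{i=1}^M \|y_i\|_2^2 + \tfrac{m(m-1)}{M(M-1)}\sum_{i\neq j}\langle y_i, y_j\rangle\Bigr).
\]
The centering identity $0 = \|\sum_i y_i\|_2^2 = \sum_i \|y_i\|_2^2 + \sum_{i\neq j}\langle y_i, y_j\rangle$ lets me eliminate the cross-term, yielding after simplification
\[
\mathbb{E}\Bigl\|\tfrac{1}{m}\sum_{j\in\mathcal{J}} y_j\Bigr\|_2^2 = \frac{M-m}{m(M-1)}\cdot\frac{1}{M}\sum_{i=1}^M \|y_i\|_2^2.
\]

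Finally, I would conclude using two elementary bounds: $\frac{M-m}{M-1}\le 1$ for $1\le m\le M$, and the variance-versus-second-moment inequality $\frac{1}{M}\sum_i \|y_i\|_2^2 = \frac{1}{M}\sum_i \|z_i - \bar z\|_2^2 \le \frac{1}{M}\sum_i \|z_i\|_2^2$. Combining these gives the stated bound. There is no real obstacle here; the only subtlety is remembering to center first so that the cross-term collapses cleanly via $\sum_i y_i = 0$, after which the finite-population correction factor $(M-m)/(M-1)$ actually produces a slightly sharper inequality than the one stated.
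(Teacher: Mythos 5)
Your proof is correct. Note that the paper does not actually supply a proof of this lemma here; it simply cites it as Lemma~A.1 of \cite{lei17}, so there is no in-text argument to compare against. Your derivation is the standard finite-population variance computation for simple random sampling without replacement, and every step checks out: centering so that $\sum_i y_i = 0$, expanding the squared norm against the first- and second-order inclusion probabilities $m/M$ and $m(m-1)/[M(M-1)]$, collapsing the cross-term via the centering identity, and then bounding $\frac{M-m}{M-1}\le 1$ and $\frac{1}{M}\sum_i\|z_i-\bar z\|_2^2\le\frac{1}{M}\sum_i\|z_i\|_2^2$. As you observe, this actually establishes the slightly sharper intermediate equality
\[
\E\left\|\frac{1}{m}\sum_{j\in\mathcal{J}}z_j-\bar z\right\|_2^2=\frac{M-m}{m(M-1)}\cdot\frac{1}{M}\sum_{i=1}^M\|z_i-\bar z\|_2^2,
\]
from which the stated bound follows by dropping the finite-population correction factor and the centering; that sharper form is also what Lemma~A.1 of \cite{lei17} records before being relaxed for use.
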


\begin{lemma}\label{lem:dual_Gq}
Let $\|\cdot\|$ be an arbitrary norm on a Hilbert space and let $\|\cdot\|_{*}$ be its dual norm. Letting $G_{q}(x) = \|x\|^{q} / q$, we have
\[G_{q}^{*}(x) = \frac{1}{p}\|x\|_{*}^{p},\]
where $p = q / (q - 1)$.  
\end{lemma}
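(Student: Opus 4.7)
The plan is to compute the convex conjugate $G_q^*(x) = \sup_{y \in \mathcal{X}_0}\{\langle x, y\rangle - \tfrac{1}{q}\|y\|^q\}$ directly, by separately establishing matching upper and lower bounds. The argument reduces the problem to a one-dimensional optimization in the norm of $y$, which has a clean closed form.

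For the upper bound, I would apply the defining inequality $\langle x, y\rangle \le \|x\|_* \|y\|$ of the dual norm, so that
\[
G_q^*(x) \le \sup_{t \ge 0}\left\{\|x\|_* t - \frac{1}{q} t^q\right\},
\]
where $t$ plays the role of $\|y\|$. This is a smooth concave function of $t$ (for $q>1$), maximized at $t_* = \|x\|_*^{1/(q-1)}$, giving the value $(1 - \tfrac{1}{q})\|x\|_*^{q/(q-1)} = \tfrac{1}{p}\|x\|_*^p$ since $1/p = 1 - 1/q$.

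For the matching lower bound, I would exploit the definition of the dual norm itself: for each $\epsilon > 0$ there exists $\hat y$ with $\|\hat y\| = 1$ such that $\langle x, \hat y\rangle \ge \|x\|_* - \epsilon$. Plugging the trial point $y = t_* \hat y$ into the supremum yields
\[
G_q^*(x) \ge t_*(\|x\|_* - \epsilon) - \frac{1}{q} t_*^q,
\]
which converges to $\tfrac{1}{p}\|x\|_*^p$ as $\epsilon \to 0$. The degenerate case $x = 0$ is handled by observing that both sides vanish, with the supremum attained at $y = 0$.

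I do not anticipate any real obstacle; the only subtlety worth flagging is that the dual norm is defined as a supremum and need not be attained for a general norm on an arbitrary Hilbert space (in the sense of infinite-dimensional weak issues), which is why I pass through an $\epsilon$-approximate maximizer rather than claiming attainment. The routine calculus in the one-dimensional problem — differentiating $\|x\|_* t - t^q/q$ and verifying the critical value — is the bulk of the verification, and the identity $\tfrac{q-1}{q} = \tfrac{1}{p}$ closes the argument.
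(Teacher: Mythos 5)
Your proof is correct and takes essentially the same route as the paper: bound $\langle x,y\rangle$ by $\|x\|_*\|y\|$, solve the resulting one-dimensional problem in $t=\|y\|$ (your explicit calculus is precisely the proof of the Fenchel--Young inequality the paper cites), and attain the bound by scaling a unit-norm direction that nearly realizes the dual norm. The only difference is a small refinement in rigor on your part: the paper writes $\argmax_{\|y\|=1}\langle x,y\rangle$ and thus tacitly assumes the dual-norm supremum is attained, whereas you correctly pass through an $\epsilon$-approximate maximizer, which covers the general Hilbert-space setting the lemma is stated in.
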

\begin{proof}
By definition of $\|\cdot\|_{*}$,
\[\|x\|_{*} = \sup_{y: \|y\| = 1}\la x, y\ra.\]
Thus, 
\[\la x, y\ra\le \|x\|_{*}\|y\|.\]
By the Fenchel-Young inequality, for any nonnegative numbers $a, b$
\[ab \le \frac{a^{p}}{p} + \frac{b^{q}}{q}.\]
Let $a = \|x\|_{*}, b = \|y\|$, 
\[\|x\|_{*}\|y\|\le \frac{1}{p}\|x\|_{*}^{p} + \frac{1}{q}\|y\|^{q} = \frac{1}{p}\|x\|_{*}^{p} + G_{q}(y).\]
By definition,
\[G_{q}^{*}(x) = \sup_{y\in \X_{0}}\la x, y\ra - G_{q}(y).\]
Thus, we have proved that
\[G_{q}^{*}(x)\le \frac{1}{p}\|x\|_{*}^{p}.\]
To show the equality, it remains to find a $y(x)$ for any $x$ that achieves the equality. Let 
\[y(x) = \|x\|_{*}^{p - 1}\argmax_{y: \|y\| = 1}\la x, y\ra.\]
Then $\|y(x)\| = \|x\|_{*}^{p-1}$ and 
\[\la x, y(x)\ra = \|x\|_{*}^{p - 1}\sup_{y: \|y\| = 1}\la x, y\ra = \|x\|_{*}^{p}.\]
Therefore, 
\[\la x, y(x)\ra - G_{q}(y(x)) = \|x\|_{*}^{p} - \frac{1}{q}\|x\|_{*}^{(p - 1)q} = \frac{1}{p}\|x\|_{*}^{p}.\]

The proof is completed by putting the two pieces together.
\end{proof}

\begin{proposition}[Theorem 4, \cite{hoeffding63}]\label{prop:hoeffding}
Let $C = (c_{1}, \ldots, c_{n})$ be a finite population of random elements. Let $Z_{1}, \ldots, Z_{m}$ be a random sample with replacement from $C$ and $W_{1}, \ldots, W_{m}$ be a random sample without replacement from $C$. If the function $f(x)$ is continuous and convex, then
\[\E f\lb\sum_{i=1}^{m}Z_{i}\rb\ge \E f\lb\sum_{i=1}^{m}W_{i}\rb.\]
\end{proposition}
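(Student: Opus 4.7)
The final statement is Proposition~\ref{prop:hoeffding}, Hoeffding's classical comparison between sampling with and without replacement. My plan would be to establish it as a convex-order inequality between two natural discrete distributions, and then invoke Jensen-type reasoning.

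First I would rewrite both sides by indexing over occupancy vectors. Let $N_j = \#\{i: Z_i = c_j\}$, so that $(N_1,\ldots,N_n) \sim \mathrm{Multinomial}(m; 1/n,\ldots,1/n)$ and $\sum_{i=1}^m Z_i = \sum_{j=1}^n N_j c_j$. Let $B_j = \mathbf{1}\{c_j \text{ selected in the without-replacement sample}\}$, so $(B_1,\ldots,B_n)$ is uniform on $\{0,1\}^n$ subject to $\sum_j B_j = m$ (multivariate hypergeometric), and $\sum_{i=1}^m W_i = \sum_{j=1}^n B_j c_j$. Both have the same marginal mean $m/n$. Since $x \mapsto f(\sum_j c_j x_j)$ is convex on $\mathbb{R}^n$ (as a composition of the linear map with convex $f$), it suffices to show that the Multinomial distribution dominates the Hypergeometric distribution in the \emph{convex stochastic order}.

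Next, I would expand both sides as combinatorial averages:
\[
\E f\Big(\sum_{i=1}^m Z_i\Big) = \frac{1}{n^m}\sum_{J \in [n]^m} f\Big(\sum_{i=1}^m c_{J_i}\Big), \qquad \E f\Big(\sum_{i=1}^m W_i\Big) = \frac{1}{(n)_m}\sum_{\substack{J \in [n]^m \\ J\text{ distinct}}} f\Big(\sum_{i=1}^m c_{J_i}\Big),
\]
where $(n)_m = n!/(n-m)!$. The desired inequality is equivalent to
\[
\frac{1}{n^m - (n)_m}\sum_{J\text{ not distinct}} f\Big(\sum_i c_{J_i}\Big) \ \ge\  \frac{1}{(n)_m}\sum_{J \text{ distinct}} f\Big(\sum_i c_{J_i}\Big).
\]
To prove this, I would construct a stochastic map $\Phi$ from distinct sequences to (groups of) non-distinct sequences that averages convex-smoothly: for each distinct $J$, perturb it by replacing some coordinates with duplicates of others drawn from the remaining population, in such a way that the expectation of the resulting index-sum equals $\sum_i c_{J_i}$. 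Then Jensen's inequality applied to each fiber of $\Phi$ yields the pointwise domination, and averaging gives the global inequality. Equivalently, one can construct a Markov chain on integer compositions of $m$ that has the hypergeometric as a reversible stationary distribution and verify that a single step is a mean-preserving spread, so Jensen propagates through all transitions.

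The main obstacle is making the pairing/coupling rigorous in the vector-valued setting: one needs a family of conditional laws that map the hypergeometric vector $B$ to the multinomial vector $N$ while preserving $\E[N \mid B] = B$ and respecting the combinatorial count $n^m$ versus $(n)_m$. Since this is a purely combinatorial identity verified in full generality by Hoeffding, the cleanest route in practice is to directly invoke Theorem~4 of \cite{hoeffding63}, whose original proof handles exactly this comparison by a symmetric-function expansion together with exchangeability of the without-replacement sample.
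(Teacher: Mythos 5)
The paper does not actually prove this statement: Proposition~\ref{prop:hoeffding} appears in Appendix~\ref{app:miscellaneous} as a stated auxiliary result, with a direct citation to Theorem~4 of Hoeffding (1963) and no argument of its own. Your final paragraph lands in exactly the same place, so at the level of what the paper requires, you and the paper agree.

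That said, the self-contained sketch that occupies the bulk of your proposal is not a proof, and it is worth being precise about why. The reformulation is fine: since the uniform law on $[n]^m$ is a convex mixture, with weight $(n)_m/n^m$ on the distinct multi-indices (whose conditional law is precisely that of the without-replacement sample) and the complementary weight on the non-distinct ones, the claim is equivalent to showing that the average of $f(\sum_i c_{J_i})$ over non-distinct $J$ dominates the average over distinct $J$. But the ingredient you then invoke --- a stochastic kernel $\Phi$ from distinct to non-distinct multi-indices whose conditional mean preserves $\sum_i c_{J_i}$ \emph{and} whose push-forward of the uniform-on-distinct law is exactly uniform-on-non-distinct --- is never constructed, and both requirements must hold simultaneously for the Jensen step to close. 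You acknowledge this as ``the main obstacle,'' and indeed it is where all the content of Hoeffding's theorem lives; without it your sketch establishes nothing. The alternative Markov-chain suggestion is likewise only a heuristic: a chain with the hypergeometric occupancy vector as reversible stationary law does not on its own compare it to the multinomial. So while your ultimate conclusion (cite Hoeffding) matches the paper, the preceding paragraphs should be read as motivation rather than as a proof.
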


\begin{proposition}\label{prop:Hanner}
Let $\X = \R^{d}$ and $G(x) = \|x\|_{r}^{q} / q$ with $r\le q$. Then property \textbf{C}3 holds with a constant that only depends on $(d, r, q)$.
\end{proposition}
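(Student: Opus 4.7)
The plan is to compute $G^{*}$ explicitly and then combine Hanner's inequality with finite-dimensional norm equivalence. First, Lemma \ref{lem:dual_Gq} identifies $G^{*}(x) = \|x\|_{r'}^{p}/p$, where $r' = r/(r-1)$ and $p = q/(q-1)$ are the conjugate exponents. Because $s \mapsto s/(s-1)$ is order-reversing on $(1, \infty)$, the hypothesis $r \le q$ is equivalent to $r' \ge p$, so property \textbf{C}3 becomes
\[
\E \left\| \sum_{j=1}^m Z_j \right\|_{r'}^{p} \;\le\; C(d, r, q) \sum_{j=1}^m \E \|Z_j\|_{r'}^{p}
\]
for independent, mean-zero $\R^{d}$-valued vectors $Z_j$.

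Second, I would split on the value of $r'$ to establish a martingale-type inequality at the natural exponent of each regime. When $r' \in (1, 2]$, Hanner's inequality applied in $\ell^{r'}$ yields the $r'$-uniform smoothness of the norm, which in turn gives a one-step bound $\E \|x + Y\|_{r'}^{r'} \le \|x\|_{r'}^{r'} + K_{1}(r')\,\E \|Y\|_{r'}^{r'}$ for mean-zero $Y$ independent of $x$; iterating over the partial sums $S_k = \sum_{j \le k} Z_j$ delivers martingale-type $r'$ with a constant depending only on $r'$. When $r' \in [2, \infty]$, the corresponding inequality at exponent $2$ is precisely the classical Nemirovsky inequality already cited in the paper, with constant $K_{2}(d, r') \le \min\{r'-1,\, 2e \log d\}$, which in particular covers $r' = \infty$ with a $\log d$ factor.

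Third, to replace the natural smoothness exponent ($r'$ in the first case, $2$ in the second) by the target exponent $p$, I would use that all norms on $\R^{d}$ are equivalent: choose $c_{1}(d, r'),\, c_{2}(d, r')$ such that $c_{1}\|x\|_{2} \le \|x\|_{r'} \le c_{2}\|x\|_{2}$, reduce to the Hilbert norm (where orthogonality of mean-zero independent summands gives $\E \|S_m\|_{2}^{2} = \sum_{j} \E \|Z_j\|_{2}^{2}$), and then extract the $p$-th moment bound for $p \in (1, 2]$ via the Burkholder--Davis--Gundy inequality together with the subadditivity $(\sum_{j} a_j)^{p/2} \le \sum_{j} a_j^{p/2}$ of $x \mapsto x^{p/2}$ on $[0, \infty)$, valid because $p/2 \le 1$. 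Transferring back to $\|\cdot\|_{r'}$ multiplies the constant by $(c_{2}/c_{1})^{p}$, producing a final $C_{G}$ depending only on $(d, r, q)$.

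The step I expect to be the main obstacle is precisely this conversion from the ``natural'' smoothness exponent to the target exponent $p$: a direct Jensen argument applied to $\E \|S_m\|_{r'}^{r'} \le K \sum_{j} \E \|Z_j\|_{r'}^{r'}$ breaks down at the last step because $(\E \|Z_j\|_{r'}^{r'})^{p/r'} \ge \E \|Z_j\|_{r'}^{p}$ points the wrong way. The finite-dimensional norm-equivalence reduction, rather than an interpolation argument in the Banach-space sense, is what lets the constant absorb the gap into a dimension-dependent factor.
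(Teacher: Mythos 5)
Your proposal is correct, but it runs on a genuinely different engine than the paper's. The paper reduces matters to $p$-uniform smoothness of $\ell^{r'}$ via Proposition 2.4(b) of \cite{pisier75} and checks $\rho_{r'}(t)\le (1+t^{r'})^{1/r'}-1\le At^{p}$ by Hanner's inequality, giving a dimension-free constant. Your third step sidesteps Pisier entirely: pass from $\|\cdot\|_{r'}$ to $\|\cdot\|_{2}$ by finite-dimensional norm equivalence, use the Hilbert-space BDG bound $\E\|S_{m}\|_{2}^{p}\le C_{p}\E\lb\sum_{j}\|Z_{j}\|_{2}^{2}\rb^{p/2}$, and then apply subadditivity of $x\mapsto x^{p/2}$ \emph{pointwise inside the expectation} --- which is precisely what a naive outer-Jensen argument misses, as you note. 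The price is a factor of order $d^{p|1/2-1/r'|}$ from the norm equivalence where the paper's constant is dimension-free, but the proposition only promises a $(d,r,q)$-dependent constant, so this is acceptable. Two observations: once your third step is in place, your second step (Hanner for $r'\le 2$, Nemirovsky for $r'\ge 2$, at the ``natural'' exponents) is entirely redundant, because the norm-equivalence reduction never uses those inequalities; and, as your aside ``valid because $p/2\le 1$'' signals, both your argument and the paper's require $p\le 2$, i.e.\ $q\ge 2$ --- for $q<2$ no nontrivial normed space has martingale type $p>2$, so this is really a tacit hypothesis of the proposition.
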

\begin{proof}
By Lemma \ref{lem:dual_Gq}, 
\[G^{*}(x) = \|x\|_{r'}^{p} / p,\]
where $r' = r / (r - 1)$ and $p = q / (q - 1)$. Let $\rho_{r}(t)$ denote the modulus of smoothness:
\[\rho_{r}(t) = \sup_{\|x\|_{r} = \|y\|_{r} = 1}\left\{\frac{\|x + ty\|_{r} + \|x - ty\|_{r}}{2} - 1\right\}.\]
Let $\mathcal{F}_{n}$ denote the sigma field generated by $\{Z_{1}, \ldots, Z_{n}\}$ for $n\le m$ and $\mathcal{F}_{n} = \mathcal{F}_{m}$ for all $n > m$. Further let
\[X_{0} = 0, \quad X_{n} = \left\{
    \begin{array}{ll}
      X_{n - 1} + Z_{n} & (n \le m)\\
      X_{n - 1} & (n > m)
    \end{array}\right..\]
Clearly $\{X_{n}\}_{n\ge 0}$ is a martingale with respect to the filtration $(\mathcal{F}_{n})_{n\ge 0}$. By Proposition 2.4 (b) of \cite{pisier75}, it is left to show that 
\begin{equation}\label{eq:p_smooth}
\rho_{r'}(t)\le At^{p},
\end{equation}
for some constant $A > 0$. By Hanner's inequality \citep{hanner56},
\[\lb\|x + ty\|_{r'} + \|x - ty\|_{r'}\rb^{r'}\le \|2x\|_{r'}^{r'} + \|2ty\|_{r'}^{r'} = 2^{r'}(1 + t^{r'}).\]
This implies that
\[\rho_{r'}(t) \le (1 + t^{r'})^{1/r'} - 1.\]
Note that $r\le q$ implies that $r' \ge p$. First we notice that
\[\lim_{t\rightarrow 0}\frac{(1 + t^{r'})^{1/r'} - 1}{t^{p}} = \lim_{t\rightarrow 0}\frac{t^{r'}}{r' t^{p}} = \left\{\begin{array}{ll}
0 & (r' > p)\\
1 / r' & (r' = p)
\end{array}\right..\]
Thus there exists a constant $c$ such that for any $t \le c$, 
\[(1 + t^{r'})^{1/r'} \le 1 + t^{p}.\]
When $t > c$, 
\[(1 + t^{r'})^{1/r'}\le \lb \frac{t^{r'}}{c^{r'}} + t^{r'}\rb^{1/r'} = t\lb c^{-r'} + 1\rb^{1/r'}\le t^{p} c^{-(p - 1)}\lb c^{-r'} + 1\rb^{1/r'},\]
where the last inequality uses the fact that $p\ge 1$. Putting the pieces together, we prove \eqref{eq:p_smooth} with
\[A = 1 + c^{-(p - 1)}\lb c^{-r'} + 1\rb^{1/r'}.\]
\end{proof}

\section{Empirical Demonstration}\label{app:empirical}

Although our paper is focusing on the provable adaptivity of SCSG which is not easy to be illustrated numerically, it is worth demonstrating the empirical performance of SCSG as a sanity check. We consider multi-class classification problems on five datasets from UCI machine learning repository (\url{https://archive.ics.uci.edu/ml/index.php}, \cite{UCI})  -- \texttt{adult}\footnote{Data source: https://archive.ics.uci.edu/ml/datasets/Adult ; contributed by \cite{adult}.}, \texttt{connect}\footnote{Data source: https://archive.ics.uci.edu/ml/datasets/Connect-4; contributed by John Tromp.}, \texttt{credit}\footnote{Data source: https://archive.ics.uci.edu/ml/datasets/default+of+credit+card+clients ; contributed by \cite{credit}.}, \texttt{crowdsource}\footnote{Data source: https://archive.ics.uci.edu/ml/datasets/Crowdsourced+Mapping ; contributed by \cite{crowdsource}.} and \texttt{sensor}\footnote{Data source: https://archive.ics.uci.edu/ml/datasets/Gas+Sensor+Array+Drift+Dataset+at\\+Different+Concentrations ; contributed by \cite{sensor1} and \cite{sensor2}.}. We also include the well-known MNIST dataset\footnote{Data source: http://yann.lecun.com/exdb/mnist/ ; contributed by \cite{lecun1998gradient}.} For each dataset, we fit a multi-class logistic regression with individual loss
\[f_{i}(x) = \log\lb 1 + \sum_{k=1}^{K - 1}e^{a_{i}^{T}x_{k}}\rb - \sum_{k=1}^{K - 1}I(y_{i} = k)a_{i}^{T}x_{k}\]
where $K$ denotes the number of classes and $x\in \R^{p\times (K-1)}$ denotes the concatenation of $x_{1}, x_{2}, \ldots, x_{K-1}$. By convention, we add an $L_{2}$ regularizer $(1/n)\|x\|_{2}^{2}$ on the loss function to guarantee the existence of (finite) optimizers, i.e.
\[F(x) = \frac{1}{n}\sum_{i=1}^{n}f_{i}(x) + \frac{1}{n}\|x\|_{2}^{2}.\]
Although $F$ is guaranteed to be $1/n$ strongly convex, the parameter is too small to be useful for algorithms that takes it as an input. 

For each dataset, we optimize the objective function by the following six methods: 
\begin{enumerate}[Method 1.]
\item SCSG with $\alpha = 1.25, B_{0} = 0.001n, m_{0} = 0.005n$, $b = 0.0001n$, as suggested in Remark \ref{rem:practical}, and a constant step size $\eta$ (to be discussed below).
\item SVRG with inner loop size $m = 2n$ as suggested in \cite{SVRG}, mini-batch size $b = 0.0001n$ and a constant step size $\eta$;
\item SARAH \citep{nguyen2019finite} with inner loop size $m = 2n$, mini-batch size $b = 0.0001n$ and a constant step size $\eta$;
\item Katyusha$^{\mathrm{ns}}$, which works for non-strongly convex functions, with inner loop size $m = 2n$ (Algorithm 2 of \cite{Katyusha}). We choose option II to update $y_{k+1}$ (line 12 of Algorithm 2) because this yields better results than optionI. In addition, we find that the default option $\alpha_{s} = 1 / 3\tau_{1,s}L$ may be too conservative. To explore the best performance of Katyusha, we set it to be $\eta / \eta_{1,s}$ and find the best-tuned $\eta$. Indeed, the best-tuned $\eta$ is at least $1/L$ on all six datasets and is as large as $8/L$ for \texttt{mnist} and \texttt{sensor}. 
\item SGD with a constant stepsize $\eta$ and mini-batch size $b = 0.0001n$;
\item SGD with a linearly decayed step size $\eta_{t} = \eta / (1 + t)$ and mini-batch size $b = 0.0001n$;
\item GD with a constant step size $\eta$.
\end{enumerate}

We run each algorithm and each parameter $\eta$ for $50$ effective passes of data and choose the step size that yields the smallest function value. In particular, we set 
\[\eta = \frac{c}{L}, \quad \mbox{where }c \in \{2^{k}: k = -10, -9, \ldots, 9, 10\}\]
where $L$ is estimated by 
\[L = \frac{1}{n}\sum_{i=1}^{n}L_{i}, \quad L_{i} = 2\|a_{i}\|^{2}\]
where $L_{i}$ is obtained from Proposition E.1 of \cite{SCSG}. Note that we compute the average smoothness as opposed to the worst-case smoothness in our theory because the former yields a better scaling for $c$ empirically. In principle we could set $L = \max_{i}L_{i}$ and change the scale of $c$ correspondingly. In addition, we remove 5\% outliers with largest $L_{i}$'s from each dataset to avoid unstable performance. We emphasize that outlier removal is helpful to stabilize all algorithms considered here. 

Finally, we estimate the true optimum $f(x^{*})$ by running SCSG with the best-tuned $\eta$ with 5000 effective passes of data. As a sanity check, we also estimate $f(x^{*})$ via SVRG
 with the best-tuned $\eta$ with 5000 effective passes of data and find that SCSG consistently yields better solution than SVRG for all datasets.

The programs to replicate all results are available at \url{https://github.com/lihualei71/ScsgAdaptivity}. The best-tuned results for each algorithm are displayed in Figure \ref{fig:experiment}. We find that the best-tuned step size does not lie in the boundary of the tuning set, namely $2^{-10}/L$ or $2^{10}/ L$, for all methods and all datasets. This indicates that we tune each method sufficiently. For each plot, the y-axis gives the ratio $(f(x) - f(x^{*})) / (f(x_0)-  f(x^{*}))$ in log-scales so that the initial value is always $10^{0}$. It is clear that SCSG performs well on all these datasets.

\begin{figure}
  \centering
  \includegraphics[width = 0.48\textwidth]{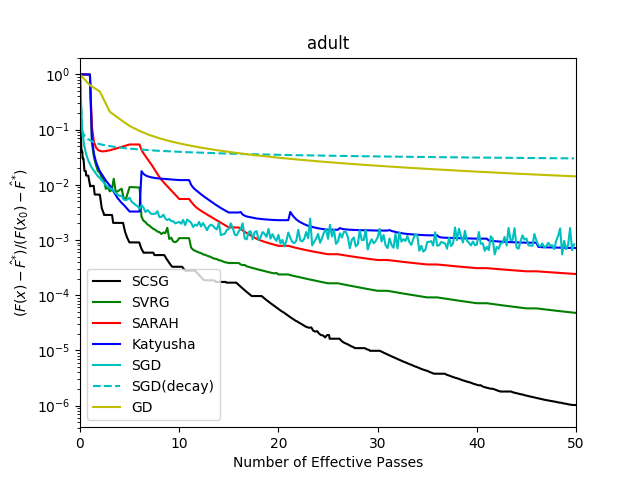}
  \includegraphics[width = 0.48\textwidth]{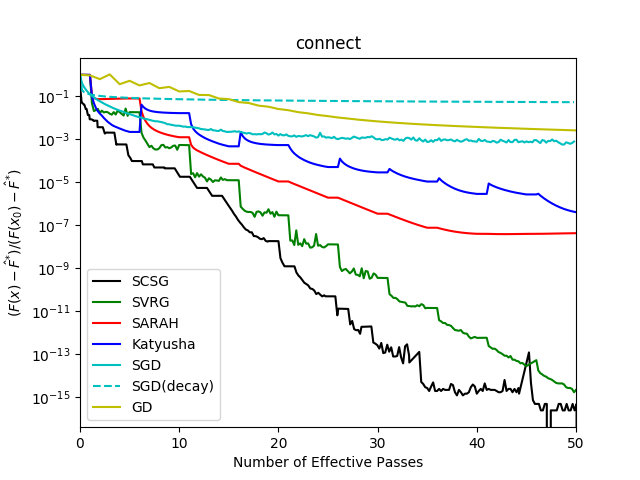}
  \includegraphics[width = 0.48\textwidth]{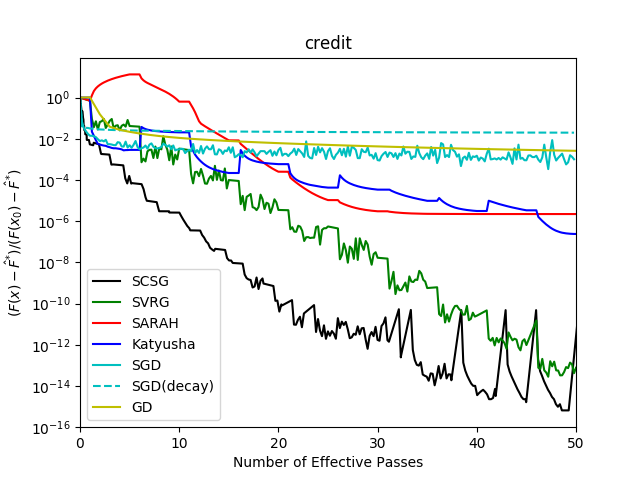}
  \includegraphics[width = 0.48\textwidth]{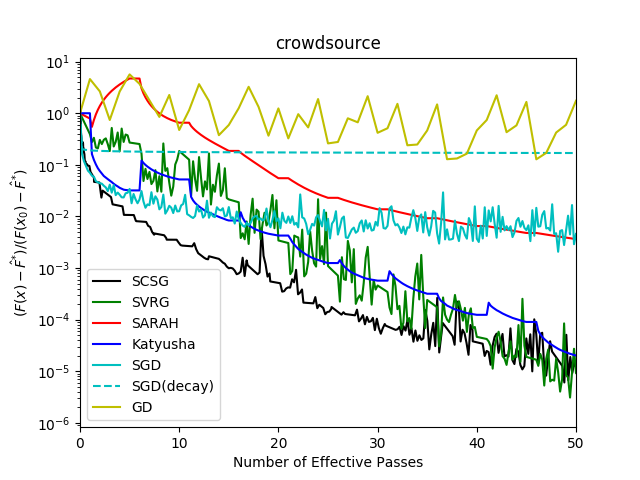}
  \includegraphics[width = 0.48\textwidth]{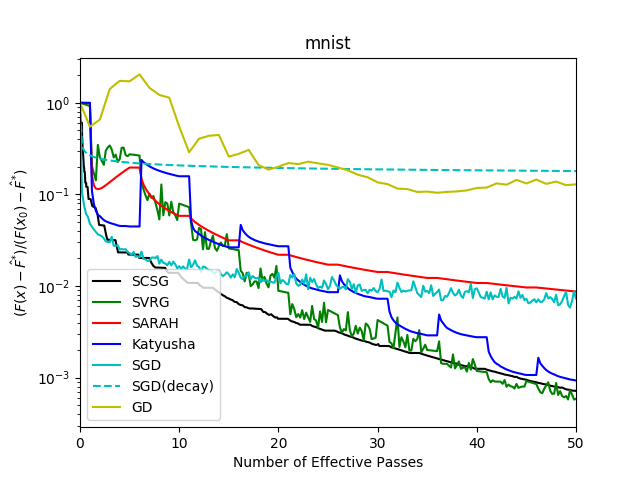}
  \includegraphics[width = 0.48\textwidth]{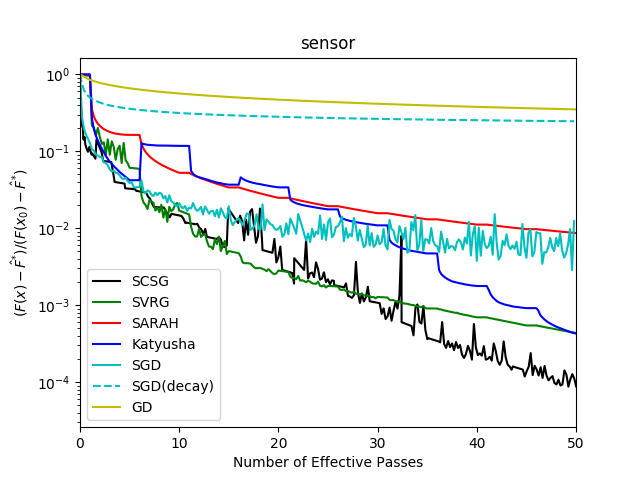}
  \caption{Experimental results on six datasets}\label{fig:experiment}
\end{figure}

\end{document}